\DeclareMathOperator{\aut}{Aut}
\DeclareMathOperator{\cay}{Cay}
\DeclareMathOperator{\cyc}{Cyc}
\DeclareMathOperator{\rk}{rk}
\DeclareMathOperator{\Span}{Span}
\DeclareMathOperator{\sym}{Sym}
\DeclareMathOperator{\rad}{rad}
\DeclareMathOperator{\dimwl}{dim_{WL}}
\DeclareMathOperator{\rkwl}{rk_{WL}}
\DeclareMathOperator{\WL}{WL}
\DeclareMathOperator{\Paley}{Paley}
\DeclareMathOperator{\GCD}{GCD}
\DeclareMathOperator{\GPaley}{GPaley}
\def\@seccntformat#1{\csname the#1\endcsname. } 
\def\@biblabel#1{#1.}
\title{On WL-rank and WL-dimension of some Deza circulant graphs}
\author{Ravil Bildanov}
\address{Novosibirsk State University, Novosibirsk, Russia}
\email{ravilbildanov@gmail.com}
\author{Viktor Panshin}
\address{Novosibirsk State University, Novosibirsk, Russia}
\email{v.panshin@g.nsu.ru}
\author{Grigory Ryabov}
\address{Sobolev Institute of Mathematics, Novosibirsk, Russia}
\address{Novosibirsk State University, Novosibirsk, Russia}
\email{gric2ryabov@gmail.com}
\thanks{The work is supported by Mathematical Center in Akademgorodok under agreement No.~075-2019-1675 with the Ministry of Science and Higher Education of the Russian Federation}
\date{}
\newtheorem{state}{Statement}[section]
\newtheorem{lemm}[state]{Lemma}
\newtheorem{theo}[state]{Theorem}
\newtheorem{prop}[state]{Proposition}
\theoremstyle{definition}
\newtheorem*{rem1}{Remark 1}
\begin{document}

\vspace{\baselineskip}
\vspace{\baselineskip}

\vspace{\baselineskip}

\vspace{\baselineskip}

\begin{abstract}
The \emph{WL-rank} of a digraph $\Gamma$ is defined to be the rank of the coherent configuration of $\Gamma$. The \emph{WL-dimension} of $\Gamma$ is defined to be the smallest positive integer~$m$ for which $\Gamma$ is identified by the $m$-dimensional Weisfeiler-Leman algorithm. We classify the Deza circulant graphs of WL-rank~$4$. In additional, it is proved that each of these graphs has WL-dimension at most~$3$. Finally, we establish that some families of Deza circulant graphs have WL-rank~$5$ or~$6$ and WL-dimension at most~$3$.
\\
\\
\textbf{Keywords}: WL-rank, WL-dimension, Circulant graphs, Deza graphs.
\\
\\
\textbf{MSC}: 05C25, 05C60, 05C75. 
\end{abstract}

\maketitle

\section{Introduction}

Let $V$ be a finite set and $|V|=n$. A \emph{coherent configuration} on $V$ can be thought as a special partition of $V\times V$ for which the diagonal of $V\times V$ is a union of some classes~\cite[Definition~2.1.3]{CP}. Let $\Gamma=(V,E)$ be a digraph with vertex set $V$ and arc set $E$. The \emph{WL-rank} (the \emph{Weisfeiler-Leman rank}) of $\Gamma$ is defined to be the number of classes in the smallest coherent configuration on the set $V$ for which $E$ is a union of classes (see Section~$2$ for the exact definitions). For a given graph, such coherent configuration can be found efficiently using the Weisfeiler-Leman algorithm~\cite{WeisL}. This explains the choice of the term ``WL-rank''. The WL-rank of $\Gamma$ is denoted by $\rkwl(\Gamma)$. Since the diagonal of $V\times V$ is a union of some classes, $\rkwl(\Gamma)\geq 2$. It is easy to check that $\rkwl(\Gamma)=2$ if and only if $\Gamma$ is complete or empty. On the other hand, obviously, $\rkwl(\Gamma)\leq n^2$.

A $k$-regular graph $\Gamma$ is called \emph{strongly regular} if there exist nonnegative integers $\lambda$ and $\mu$ such that every two adjacent vertices have $\lambda$ common neighbors and every two distinct non-adjacent vertices have $\mu$ common neighbors. The following generalization of strongly regular graphs was introduced in~\cite{Deza} and the name was given in~\cite{EFHHH}. A $k$-regular graph $\Gamma$ with $n$ vertices is called a \emph{Deza} graph if there exist nonnegative integers $a$ and $b$ such that any pair of distinct vertices of $\Gamma$ has either $a$ or $b$ common neighbors. The numbers $(n,k,b,a)$ are called the \emph{parameters} of the Deza graph $\Gamma$. Unlike the class of strongly regular graphs, the class of Deza graphs is not closed under taking complements~\cite{EFHHH}. A Deza graph is called \emph{strictly} if it is non-strongly regular and has diameter~$2$. Clearly, if $a>0$ and $b>0$ then $\Gamma$ has diameter~$2$. For more information on Deza graphs, we refer the readers to~\cite{EFHHH}. The WL-rank of a given graph is at most~$3$ if and only if this graph is strongly regular~\cite[Section~2.6.3]{CP}. It seems natural to study how large the WL-rank of a (strictly) Deza graph can be.

Let $G$ be a finite group. By a \emph{Cayley digraph} over $G$, we mean a digraph with vertex set $G$ and arc set $\{(g,sg):~s\in S,g\in G\}$, where $S$ is an identity-free subset of $G$. For a given identity-free subset $S$ of $G$, such digraph is denoted by $\cay(G,S)$. If $S=S^{-1}$ then $\Gamma$ is a \emph{Cayley graph}.  In the present paper we deal with Deza \emph{circulant} graphs, i.e. Deza Cayley graphs over cyclic groups. All strongly regular circulant graphs, i.e. all circulant graphs of WL-rank at most~$3$, were classified independently in~\cite{BM,HLW,Ma}. We classify all Deza circulant graphs of WL-rank~$4$. 

In what follows, the cyclic group of order $n$ is denoted by $C_n$. The complete and cycle graphs with $n$ vertices are denoted by $K_n$ and $\mathcal{C}_n$ respectively. The complete bipartite graph with parts of sizes $l$ and $m$ is denoted by $K_{l,m}$. The complement to a graph $\Gamma$ is denoted by $\overline{\Gamma}$. The tensor and the lexicographic products of graphs $\Gamma_1$ and $\Gamma_2$ are denoted by $\Gamma_1\times \Gamma_2$ and $\Gamma_1[\Gamma_2]$ respectively (see~\cite{IK} for the definitions). The disjoint union of $m$ copies of a graph $\Gamma$ is denoted by $m\Gamma$. If $p$ is a prime and $k$ is a divisor of $p-1$ such that $\frac{p-1}{k}$ is even then the generalized Paley graph (see~\cite[Definition~1.1]{LP}) with $p$ vertices of valency $\frac{p-1}{k}$ is denoted by $\GPaley(p,\frac{p-1}{k})$. In the special case when $k=2$, the generalized Paley graph is the Paley graph denoted by $\Paley(p)$.

\begin{theo}\label{main1}
A graph $\Gamma$ is a Deza circulant graph of WL-rank~$4$ if and only if it is isomorphic to one of the following graphs:
\\
\\
$(1)$ $K_4\times K_m$, where $m>1$ is odd;
\\
\\
$(2)$ $\overline{K_2\times K_m}$, where $m>1$ is odd;
\\
\\
$(3)$ $m\mathcal{C}_5$, where $m>1$;
\\
\\
$(4)$ $lK_{m,m}$, where $l,m>1$;
\\
\\
$(5)$ $K_l[mK_2]$, where $l,m>1$;
\\
\\
$(6)$ $\Paley(p)[K_2]$, where $p$ is a prime such that $p\equiv 1\mod~4$;
\\
\\
$(7)$ $\GPaley(p,\frac{p-1}{3})$, where $p$ is a prime of the form $p=t^2+3$  with an integer $t$;
\\
\\
$(8)$ $\overline{\GPaley(p,\frac{p-1}{3})}$, where $p$ is a prime of the form $p=t^2+12$  with an integer $t$.
\end{theo}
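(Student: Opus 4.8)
The plan is to prove the theorem in two directions. The "if" direction is the more routine one: for each of the eight families, I would exhibit that the graph is a Deza circulant graph and compute its WL-rank directly. To see that each graph is circulant, I would write down an explicit connection set in the appropriate cyclic group — e.g.\ $K_4\times K_m$ with $\gcd(4,m)=1$ is circulant over $C_{4m}$ by the Chinese Remainder Theorem, $lK_{m,m}$ is circulant over $C_{2lm}$, and the (generalized) Paley constructions are circulant over $C_p$ by definition. To check the Deza property I would verify that the number of common neighbors of two distinct vertices takes only two values; for the product constructions this follows from the structure of tensor/lexicographic products, and for the Paley cases it follows from the known cyclotomic intersection numbers. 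Finally, to pin down $\rkwl(\Gamma)=4$, I would describe the coherent configuration generated by the edge set: for the imprimitive families (2)–(5) this is the obvious one coming from the block system (a "wreath-type" or tensor-type configuration with exactly four classes), and for the Paley-type families (6)–(8) I would use the fact that the relevant cyclotomic scheme has rank $4$ (rank $3$ for the generalized Paley graphs on $\frac{p-1}{3}$ classes would make them strongly regular, which by the hypothesis $p=t^2+3$ or $p=t^2+12$ they are not — this is exactly where those number-theoretic conditions enter).

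For the "only if" direction, let $\Gamma=\cay(C_n,S)$ be a Deza circulant graph with $\rkwl(\Gamma)=4$. Since a graph has WL-rank at most $3$ iff it is strongly regular, and strongly regular circulant graphs are completely classified (by \cite{BM,HLW,Ma}, they are $K_n$, $\overline{K_n}$, $m K_r$ with $n=mr$, and $\Paley(p)$), the rank-$4$ assumption means the coherent configuration $\cX$ of $\Gamma$ has exactly four classes and is not the rank-$3$ scheme of a strongly regular graph. I would split into cases according to whether $\cX$ is primitive or imprimitive. In the imprimitive case, $\cX$ has a nontrivial block system; since $\cX$ is a circulant coherent configuration, the blocks correspond to a subgroup $H\le C_n$, and the quotient and the induced configuration on a block are each of small rank, forcing $\Gamma$ to be built from complete or empty graphs and cycles of length $5$ via tensor/lexicographic products and disjoint unions — this should yield exactly families (1)–(5) after matching the Deza condition. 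In the primitive case, a rank-$4$ primitive circulant coherent configuration on $C_n$ forces $n=p$ prime (primitive circulant schemes of prime power order that are not of prime order split), and a rank-$4$ primitive association scheme on $C_p$ is a cyclotomic scheme corresponding to a subgroup of index $3$ in $\mZ_p^\times$; requiring the graph to be Deza but not strongly regular then gives families (6)–(8), with the Diophantine conditions $p=t^2+3$, $p=t^2+12$ coming from writing down when the two cyclotomic intersection numbers collapse to exactly two values.

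The main obstacle, I expect, is the imprimitive case of the "only if" direction: one must carefully analyze how a rank-$4$ coherent configuration can decompose over a block system and rule out all configurations that fail to be Deza, since there are several ways to glue together low-rank pieces and only specific ones produce a genuinely Deza (rather than strongly regular or non-Deza) graph. A secondary technical point is handling the generalized Paley graphs: one must compute the cyclotomic numbers of order $3$ explicitly (these are governed by the representation $4p = L^2 + 27M^2$) and determine precisely when $\GPaley(p,\frac{p-1}{3})$ or its complement is Deza, which is where the conditions $p = t^2+3$ and $p=t^2+12$ are extracted. Throughout, I would lean on the dictionary between circulant coherent configurations and $S$-rings over $C_n$ (in the spirit of \cite{CP}), which reduces the classification to understanding $S$-rings of rank $4$, and on the Leung–Man-type description of such $S$-rings.
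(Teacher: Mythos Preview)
Your overall strategy matches the paper's: reduce to the classification of rank-$4$ $S$-rings over cyclic groups (the paper proves this as a standalone Theorem~\ref{rank4}, using Lemma~\ref{circ}, which is essentially the Leung--Man/Evdokimov--Ponomarenko structure theorem), and then run through the resulting cases checking when the Deza condition $|T(S)|=2$ holds. The cyclotomic computation for families~(7)--(8) is exactly what the paper does in Lemma~\ref{prime}, via Dickson's formulas for cyclotomic numbers of order~$3$ and the representation $4p=x^2+27y^2$.

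There is, however, a concrete error in your case split. You place family~(6), namely $\Paley(p)[K_2]$, in the \emph{primitive} case, but this graph lives on $2p$ vertices and its WL-closure is the imprimitive $S$-ring $\mathcal{T}_{C_2}\wr\cyc(\tfrac{p-1}{2},C_p)$, with the obvious block system of $p$ blocks of size~$2$. So family~(6) must be extracted from the imprimitive analysis, not the primitive one. Correspondingly, your description of the imprimitive case (``built from complete or empty graphs and cycles of length~$5$ via tensor/lexicographic products and disjoint unions'') is too narrow: in the rank-$4$ imprimitive situation the quotient or the block restriction can itself be a rank-$3$ cyclotomic scheme, not just a rank-$2$ one. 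Concretely, the paper finds four imprimitive subcases --- tensor $\mathcal{T}_L\otimes\mathcal{T}_U$ (yielding (1),(2)), iterated wreath $(\mathcal{T}_L\wr\mathcal{T}_{U/L})\wr\mathcal{T}_{G/U}$ (yielding (4),(5)), $\cyc(\tfrac{p-1}{2},L)\wr\mathcal{T}_{G/L}$ (yielding (3), and only with $p=5$), and $\mathcal{T}_L\wr\cyc(\tfrac{p-1}{2},G/L)$ (yielding (6), and only with $|L|=2$) --- and the last of these is precisely what you are missing. The primitive case then gives only families~(7) and~(8). You also need to dispose of the stray case $G\cong C_4$ with $\mathcal{A}=\mathbb{Z}G$, which the paper handles by noting no symmetric $S$ with $\WL(S)=\mathbb{Z}C_4$ exists.
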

\vspace{\baselineskip}
Note that families of strictly Deza graphs from Theorem~\ref{main1} and their parameters were described in~\cite{GGSh}. However, the classification of Deza circulant graphs of WL-rank~$4$ was unknown. We find automorphism groups of graphs from Theorem~\ref{main1} and check whether these graphs are divisible design (see~\cite{HKM} for the definition). This information is presented in Appendix.

We study one more parameter of a graph that is closely related to the Weisfeiler-Leman algorithm. The \emph{WL-dimension} (the \emph{Weisfeiler-Leman dimension}) of a graph $\Gamma$ is defined to be the smallest positive integer~$m$ for which $\Gamma$ is identified by the $m$-dimensional Weisfeiler-Leman algorithm~~\cite[Definition~18.4.2]{Grohe}. The WL-dimension of $\Gamma$ is denoted by $\dimwl(\Gamma)$. The \emph{WL-dimension} of a class of graphs $\mathcal{K}$ is defined to be the number 
$$\dimwl(\mathcal{K})=\max \limits_{\Gamma\in \mathcal{K}} \dimwl(\Gamma).$$
Interest in the WL-dimension in recent years caused, in particular, by the fact that if $\dimwl(\mathcal{K})\leq m$ then the graph isomorphism problem restricted to $\mathcal{K}$ can be solved in polynomial time by the $m$-dimensional Weisfeiler-Leman algorithm. On the other hand, there exist infinitely many graphs with arbitrarily large WL-dimension~\cite{CFI}. For more information on WL-dimension of graphs, we refer the readers to~\cite{Grohe,GN}.

The isomorphism problem for circulant graphs was solved independently in~\cite{EP2,M}. However, the question on the WL-dimension of an arbitrary circulant graph remains open. We estimate the WL-dimension of graphs from Theorem~\ref{main1}.

\begin{theo}\label{main2}
The WL-dimension of the class of circulant Deza graphs of WL-rank~$4$ is equal to~$3$. 
\end{theo}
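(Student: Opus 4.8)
The plan is to establish the two inequalities $\dimwl(\mathcal{K})\leq 3$ and $\dimwl(\mathcal{K})\geq 3$ separately, where $\mathcal{K}$ denotes the class of Deza circulant graphs of WL-rank~$4$. For the upper bound, I would invoke the classification in Theorem~\ref{main1} and treat the eight families in turn, since each family is built from simple ``atoms'' by products and disjoint unions. The key observation is that the WL-dimension interacts well with these operations: for graphs that are lexicographic products $K_l[mK_2]$ or $\Paley(p)[K_2]$, tensor products $K_4\times K_m$, complements of tensor products $\overline{K_2\times K_m}$, and disjoint unions $m\mathcal{C}_5$, $lK_{m,m}$, the $2$- or $3$-dimensional WL-algorithm should already detect the product/union structure and then reduce the identification problem to that of the factors, which are either complete graphs, complete bipartite graphs, cycles, or (generalized) Paley graphs. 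The (generalized) Paley graphs $\Paley(p)$ and $\GPaley(p,\frac{p-1}{3})$ are strongly regular or have small WL-rank, and one can cite or reprove that their WL-dimension is at most~$3$ (indeed Paley graphs are known to have WL-dimension~$3$, and the relevant generalized Paley graphs here are either strongly regular or have a $3$-dimensional coherent closure with a known automorphism group, as recorded in the Appendix). So the bulk of this direction is a finite case check, family by family, using the structure of the coherent configuration computed in the proof of Theorem~\ref{main1} together with standard lemmas on how WL-dimension behaves under taking complements and under the graph operations involved.

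For the lower bound $\dimwl(\mathcal{K})\geq 3$, it suffices to exhibit a single graph $\Gamma$ in $\mathcal{K}$ with $\dimwl(\Gamma)\geq 3$, equivalently a graph in $\mathcal{K}$ that is \emph{not} identified by the $2$-dimensional WL-algorithm. The natural candidate is a graph whose coherent configuration (of rank~$4$) coincides with that of some non-isomorphic graph, i.e.\ a graph that is not determined by its coherent configuration. I would look for the cheapest such example among families (1)--(5): for instance $K_4\times K_m$, $lK_{m,m}$, or $K_l[mK_2]$ for small parameters, and show that there is a cospectral-with-respect-to-the-coherent-configuration mate, or more precisely another (not necessarily circulant) graph sharing the same $2$-dimensional WL-output. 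A classical route is to produce two non-isomorphic graphs with the same rank-$4$ coherent configuration by varying the way the ``fibers'' of the imprimitivity structure are glued; disjoint unions $lK_{m,m}$ are promising because the $2$-WL algorithm cannot always tell apart different bipartite-block arrangements. Once WL-rank-$4$-but-not-$2$-WL-identified example is found, combined with the upper bound this pins the value at exactly~$3$.

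The main obstacle I anticipate is the lower bound: proving that a specific graph in $\mathcal{K}$ is not identified by $2$-dimensional WL requires either constructing an explicit non-isomorphic companion with identical $2$-WL colouring (a hands-on combinatorial construction, possibly via the CFI-type gadget or via a direct ad hoc pair), or an indirect argument via the automorphism group and the theory of coherent configurations (e.g.\ showing the coherent configuration is not separable, or that its algebraic automorphism group is strictly larger than the combinatorial one). The former needs a careful, concrete example; the latter needs the separability/schurity machinery for the particular rank-$4$ configurations arising here. I would first try the explicit-pair approach on $lK_{m,m}$ with the smallest feasible $l,m$, since disconnected graphs with many isomorphic components are the textbook source of $2$-WL-indistinguishable non-isomorphic pairs, and only fall back on the coherent-configuration-theoretic argument if no clean small pair presents itself. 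For the upper bound the only mild technical point is handling the (generalized) Paley factors uniformly, but those reduce to already-known or easily-checked facts about strongly regular and rank-$\le 4$ circulant graphs.
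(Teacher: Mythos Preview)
Your upper-bound strategy is broadly compatible with the paper's, though the paper packages it more cleanly: rather than arguing operation-by-operation on the graph level, it works with the $S$-ring $\mathcal{A}=\WL(\Gamma)$ and uses that wreath and tensor products of separable $S$-rings are separable (Lemma~\ref{separ}). This immediately gives $\dimwl(\Gamma)=2$ for families (1)--(5). For families (6)--(8) the paper invokes two tailored lemmas (Lemmas~\ref{onepointnormal} and~\ref{paley}) showing that a suitable one-point extension is separable, hence $\dimwl(\Gamma)\leq 3$. Your ``products and unions'' heuristic could be made to work, but the separability route is shorter and avoids reproving reduction lemmas for each operation.

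The real gap is in your lower-bound plan. You propose to look for a witness with $\dimwl(\Gamma)\geq 3$ among families (1)--(5), singling out $lK_{m,m}$ as promising. This cannot succeed: for every graph in families (1)--(5) the $S$-ring $\mathcal{A}$ is a wreath or tensor product of $S$-rings of rank~$2$ (or, in family~(3), of a rank-$3$ $S$-ring over $C_5$), all of which are separable; by Lemma~\ref{separ} so is $\mathcal{A}$, and then Lemma~\ref{wldim}(1) forces $\dimwl(\Gamma)\leq 2$. Your intuition that disjoint unions of isomorphic components are a ``textbook source'' of $2$-WL failures does not apply here, because the coherent configuration of $lK_{m,m}$ is a scheme whose intersection numbers already pin down the graph up to isomorphism.

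The paper's witness instead comes from family~(6): take $\Gamma=\Paley(p)[K_2]$ with $p=29$. Here $\mathcal{A}=\mathcal{T}_L\wr\cyc(\frac{p-1}{2},G/L)$, and if $\mathcal{A}$ were separable then so would be the factor $\cyc(14,C_{29})$, i.e.\ the Paley scheme on $29$ points. But the Hanaki--Miyamoto tables exhibit an association scheme algebraically isomorphic yet not isomorphic to this Paley scheme, so it is not separable, and hence $\dimwl(\Gamma)=3$. In short: the lower bound lives in the ``arithmetic'' families (6)--(8), not in the ``combinatorial'' families (1)--(5); your fallback idea of checking non-separability is exactly right, but it has to be aimed at the Paley factor.
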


The WL-rank of a Deza circulant graph can be arbitrarily large. For example, the WL-rank of a cycle graph with $n$ vertices is equal to~$[\frac{n}{2}]+1$. Observe that a cycle graph is a non-strictly Deza graph. It seems that the situation may be different in case of strictly Deza circulant graphs. The analysis of the results of the computer calculations~\cite{GGSh,GSh} implies that every strictly Deza circulant graph with at most~$95$ vertices occurs in Theorem~\ref{main1} or belongs to one of the several families. We establish that the WL-rank of graphs from these families is equal to~$5$ or $6$ (Section~$9$). We also prove that the WL-dimension of the above graphs is at most~$3$.  So we obtain the following statement.

\begin{theo}\label{main3}
If $\Gamma$ is a strictly Deza circulant graph with at most~$95$ vertices then $\rkwl(\Gamma)\leq 6$ and $\dimwl(\Gamma)\leq 3$.
\end{theo}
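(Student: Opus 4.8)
The plan is to combine the complete classification available from the computer calculations in~\cite{GGSh,GSh} with a finite, family-by-family analysis. First I would recall from~\cite{GGSh,GSh} the statement that every strictly Deza circulant graph on at most~$95$ vertices either appears in the list of Theorem~\ref{main1} or belongs to one of the finitely many explicitly described infinite families $\mathcal{F}_1,\dots,\mathcal{F}_r$ (the families whose WL-rank is analysed in Section~$9$). This reduces the theorem to two disjoint cases: the WL-rank~$4$ graphs of Theorem~\ref{main1}, and the members of $\mathcal{F}_1,\dots,\mathcal{F}_r$.

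For the first case, a graph $\Gamma$ occurring in Theorem~\ref{main1} has $\rkwl(\Gamma)=4\leq 6$ by definition, and $\dimwl(\Gamma)\leq 3$ by Theorem~\ref{main2}; so nothing further is needed there. For the second case I would invoke the results of Section~$9$: for each family $\mathcal{F}_i$ it is proved there that $\rkwl(\Gamma)\in\{5,6\}$ and $\dimwl(\Gamma)\leq 3$ for every $\Gamma\in\mathcal{F}_i$. Hence for every strictly Deza circulant graph $\Gamma$ on at most~$95$ vertices we get $\rkwl(\Gamma)\leq 6$ and $\dimwl(\Gamma)\leq 3$, which is exactly the assertion of the theorem.

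The only genuinely substantive work, therefore, is the input it relies on: on the one hand the exhaustiveness claim extracted from~\cite{GGSh,GSh} (that the enumeration of strictly Deza circulant graphs with $n\le 95$ produces nothing outside Theorem~\ref{main1} and the families $\mathcal{F}_1,\dots,\mathcal{F}_r$), and on the other hand the computations of Section~$9$ giving the WL-rank and bounding the WL-dimension of those families. I expect the main obstacle to be the WL-dimension bound for the families of WL-rank~$5$ and~$6$: showing $\dimwl(\Gamma)\le 3$ requires exhibiting, for each family, an explicit separating strategy for the $3$-dimensional Weisfeiler-Leman algorithm (for instance, by analysing the algebraic structure of the associated coherent configuration and its automorphism group, or by reducing to smaller or already-understood graphs via the product decompositions these graphs admit), rather than the purely bookkeeping combination performed in the body of the proof above. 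Once Section~$9$ supplies these bounds, the proof of Theorem~\ref{main3} itself is a short assembly of the two cases.
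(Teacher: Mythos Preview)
Your proposal is correct and matches the paper's own proof essentially verbatim: the paper reduces, via the computational classification in~\cite{GGSh}, to the WL-rank~$4$ graphs of Theorem~\ref{main1} (handled by Theorem~\ref{main2}) together with Family~1, Family~2, and two sporadic graphs (handled by Propositions~\ref{f1},~\ref{f2},~\ref{sp} in Section~9). The only minor imprecision is that not all of the $\mathcal{F}_i$ are infinite families---two of them are single sporadic graphs on $8$ and $9$ vertices---and the WL-dimension bounds in Section~9 are obtained not by ad hoc separating strategies but via Lemma~\ref{onepointnormal} (normal cyclotomic $S$-rings) and Lemma~\ref{separ} (tensor/wreath products of separable $S$-rings).
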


The authors do not know strictly Deza circulant graphs which do not belong to one of the families from Theorem~\ref{main1} or Section~$9$. The WL-rank of each of the above graphs is at most~$6$. It would be interesting to investigate how large the WL-rank of a strictly Deza circulant graph can be. It would be also nice to classify all Deza circulant graphs whose WL-rank is equal to~$5$ or~$6$. Another natural question is how large the WL-dimension of a Deza circulant graph can be. Is it true that the WL-dimension of an arbitrary Deza circulant graph is at most~$3$?

We finish the introduction with the brief outline of the paper. Sections~$2$ and~$3$ define WL-rank and WL-dimension of graphs respectively. It is natural and convenient to study WL-rank of Cayley graphs using the theory of \emph{$S$-rings} (\emph{Schur rings}) that simplifies the theory of coherent configurations in this special case. Sections~$4$ and~$5$ are concerned with $S$-rings and Cayley graphs respectively. Section~$6$ is devoted to $S$-rings over cyclic groups. Section~$7$ contains number-theoretic results concerned with \emph{cyclotomic numbers} which appear as structure constants of cyclotomic $S$-rings. In Section~$8$ we prove Theorems~\ref{main1} and~\ref{main2}. In Section~$9$ we estimate the WL-rank and the WL-dimension of some strictly Deza circulant graphs and prove Theorem~\ref{main3}. Parameters, automorphism groups, and some properties of graphs from Theorem~\ref{main1} and Section~$9$  are collected in two tables in Appendix.

The authors are grateful to D. Churikov, Prof. I. Ponomarenko, and Prof. A. Vasil'ev for the fruitful discussions on the subject matters and the valuable comments which help us to improve the text significantly. The authors would like to thank Prof. V. Kabanov, Prof. E.~Konstantinova, and Dr. L.~Shalaginov who attracted the author's attention to the topic. We are also thankful to D.~Panasenko and all participants of Project No.~13 of The First Workshop at the Mathematical Center in Akademgorodok for the help with computer calculations.

\section{WL-rank}

Let $V$ be a finite set, $|V|=n$, and $\mathcal{R}$ a partition of $V\times V$. If $W\subseteq V$ then the diagonal of $W\times W$ is denoted by $1_W$. The set of all unions of elements from $\mathcal{R}$ is denoted by $\mathcal{R}^{\cup}$. If $\alpha\in V$ and $r\in \mathcal{R}$ then put $\alpha r=\{\beta\in V:~(\alpha,\beta)\in  r\}$. The pair $\mathcal{X}=(V,\mathcal{R})$ is called a \emph{coherent configuration} on $V$ if the following conditions are satisfied:

$(1)$ $1_{V}\in \mathcal{R}^{\cup}$;

$(2)$ if $r\in \mathcal{R}$ then $r^{*}=\{(\beta,\alpha):~(\alpha,\beta)\in r\} \in \mathcal{R}$;

$(3)$ given $r,s,t\in \mathcal{R}$, the number $c_{rs}^t=|\alpha r\cap \beta s^{*}|$ does not depend on the choice of $(\alpha,\beta)\in t$. 

\noindent The elements of $V$ and $\mathcal{R}$ are called the \emph{points} and \emph{basis relations} of $\mathcal{X}$ respectively. The number $\rk(\mathcal{X})=|\mathcal{R}|$ is called the \emph{rank} of $\mathcal{X}$. The numbers $c_{rs}^t$ are called the \emph{intersection numbers} of $\mathcal{X}$. If $1_{V}\in \mathcal{R}$ then $\mathcal{X}$ is called an (\emph{association}) \emph{scheme}.

One can define a partial order on the set of all coherent configurations on the same set~$V$. Given coherent configurations $\mathcal{X}$ and $\mathcal{X}^{\prime}$ on $V$, we set $\mathcal{X}\leq \mathcal{X}^{\prime}$ if and only if every basis relation of $\mathcal{X}$ is a union of some basis relations of $\mathcal{X}^{\prime}$.

Let $\Gamma=(V,E)$ be a digraph. The \emph{WL-closure} of $\Gamma$ is defined to be the smallest coherent configuration $\WL(\Gamma)=(V,\mathcal{R})$ on $V$ for which $E\in \mathcal{R}^{\cup}$. Note that for a given graph, the WL-closure can be found efficiently by using the Weisfeiler-Leman algorithm~\cite{WeisL}.

The \emph{WL-rank} $\rkwl(\Gamma)$ of $\Gamma$ is defined to be the rank of $\WL(\Gamma)$. Some basic properties of WL-rank are collected in the following lemma. 

\begin{lemm}\label{wlprop}
Let $\Gamma$ be a graph with $n$ vertices. The following statements hold:
\\
\\
$(1)$ $2\leq \rkwl(\Gamma)\leq n^2$;
\\
\\
$(2)$ if $\Gamma$ is vertex-transitive, in particular, if $\Gamma$ is a Cayley graph, then $2\leq \rkwl(\Gamma)\leq n$;
\\
\\
$(3)$ $\rkwl(\Gamma)=2$ if and only if $\Gamma$ is complete or empty;
\\
\\
$(4)$ if $\Gamma$ is a connected distance-regular graph of diameter~$d$ then $\rkwl(\Gamma)=d+1$. In particular, if $\Gamma$ is strongly regular then $\rkwl(\Gamma)\leq 3$.
\end{lemm}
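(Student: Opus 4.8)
The plan is to settle items (1)--(3) by exhibiting extremal coherent configurations that witness the bounds, and to obtain item (4) from the distance scheme of a distance-regular graph.

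For the upper bound in (1), the \emph{discrete} coherent configuration on $V$, in which every singleton $\{(\alpha,\beta)\}$ is a basis relation, has rank $n^2$ and certainly has $E$ as a union of basis relations; since $\WL(\Gamma)$ is by definition the \emph{smallest} coherent configuration with this property, $\rkwl(\Gamma)\le n^2$. For the lower bound and for (3) I would work with the \emph{trivial} coherent configuration $\mathcal X_0=(V,\{1_V,(V\times V)\setminus 1_V\})$ of rank $2$. Since the arc set $E$ of a digraph contains no loops, $E\in\mathcal X_0^{\cup}$ holds precisely when $E=\emptyset$ or $E=(V\times V)\setminus 1_V$, i.e. when $\Gamma$ is empty or complete; in that case $\WL(\Gamma)=\mathcal X_0$ and $\rkwl(\Gamma)=2$. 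Conversely, $1_V$ and $E$ are always distinct nonempty members of $\mathcal R^{\cup}$ (when $E\ne\emptyset$), so $\rkwl(\Gamma)\ge2$ in general, and if $\Gamma$ is neither empty nor complete then $\WL(\Gamma)$ properly refines $\mathcal X_0$, forcing $\rkwl(\Gamma)\ge3$. This proves (1) and (3) at once.

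For (2), the point is that $\WL(\Gamma)$ is canonically associated with $\Gamma$, so $\aut(\Gamma)\le\aut(\WL(\Gamma))$. If $\Gamma$ is vertex-transitive then $\aut(\WL(\Gamma))$ is transitive on $V$, which forces $\WL(\Gamma)=(V,\mathcal R)$ to be a scheme, i.e. $1_V\in\mathcal R$. Fixing a point $\alpha\in V$, the sets $\alpha r$ with $r\in\mathcal R$ are nonempty (by homogeneity), pairwise disjoint (distinct basis relations are disjoint), and cover $V$; hence $r\mapsto\alpha r$ embeds $\mathcal R$ into a partition of $V$, and $\rkwl(\Gamma)=|\mathcal R|\le n$. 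Cayley graphs are covered since they are vertex-transitive.

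For (4), I would first invoke the classical fact that the distance relations $R_0=1_V,R_1,\dots,R_d$ of a connected distance-regular graph of diameter $d$ form a coherent configuration on $V$ with $E=R_1$; this configuration contains $E$ as a union of basis relations, so $\WL(\Gamma)$ lies below it and $\rkwl(\Gamma)\le d+1$. For the matching lower bound, write $\mathcal X=(V,\mathcal R)=\WL(\Gamma)$ and recall that $\mathcal R^{\cup}$ is closed under relational composition (a standard consequence of the existence of the intersection numbers, see \cite{CP}) and under Boolean operations. An induction on $i$ then shows that the relation $R_{\le i}$ of pairs at distance at most $i$ belongs to $\mathcal R^{\cup}$: we have $R_{\le 0}=1_V$, and $R_{\le i+1}=R_{\le i}\cup(R_{\le i}\circ E)$, where $\circ$ denotes relational composition. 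Consequently each $R_i=R_{\le i}\setminus R_{\le i-1}$ lies in $\mathcal R^{\cup}$; since $R_0,\dots,R_d$ are nonempty and pairwise disjoint, $|\mathcal R|\ge d+1$, and equality follows. The final assertion is then immediate: a strongly regular graph is complete ($\rkwl=2$), a disjoint union of at least two equal-size cliques ($\rkwl=3$), or connected of diameter $2$ ($\rkwl=3$). I expect the only genuine obstacle to be the inductive argument in (4)---in particular verifying that $\mathcal R^{\cup}$ is closed under composition---but this is routine coherent-configuration algebra available from \cite{CP}.
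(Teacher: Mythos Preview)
Your proof is correct. The paper's own proof is terse: it declares (1) obvious, cites \cite[Corollary~2.2.6(1)]{CP} for (2), says (3) follows from the definitions, and cites \cite[Theorem~2.6.11(1)]{CP} for (4). You have essentially unpacked these citations into self-contained arguments---the discrete and trivial configurations for (1) and (3), the partition $\{\alpha r:r\in\mathcal R\}$ for (2), and the distance scheme plus the inductive closure argument for (4)---so the mathematical content is the same, but your version does not rely on external references. One small bonus of your approach in (4) is that your lower-bound argument (each $R_i\in\mathcal R^{\cup}$) works for \emph{any} connected graph of diameter $d$, not just distance-regular ones, so you have actually proved slightly more than stated; distance-regularity is only needed for the matching upper bound.
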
 

\begin{proof}
Statement~1 is obvious. If $\Gamma$ is vertex-transitive then the coherent configuration $\WL(\Gamma)$ is a scheme by~\cite[Corollary~2.2.6 (1)]{CP} and the rank of every scheme on $n$ points is at most~$n$. Therefore Statement~2 holds. Statement~3 follows from the definitions of WL-closure and WL-rank. Statement~4 follows from~\cite[Theorem~2.6.11(1)]{CP}.
\end{proof}

\section{WL-dimension}

The \emph{WL-dimension} of $\Gamma$ is defined to be the smallest positive integer~$m$ for which $\Gamma$ is identified by the $m$-dimensional Weisfeiler-Leman algorithm~~\cite[Definition~18.4.2]{Grohe}. The main goal of this section is to provide an approach to the estimation of WL-dimension of graphs. The key notion of this approach is the separability of coherent configurations.

Let $\mathcal{X}=(V,\mathcal{R})$ and $\mathcal{X}^{\prime}=(V^{\prime},\mathcal{R}^{\prime})$ be coherent configurations. An \emph{algebraic isomorphism} from $\mathcal{X}$ to $\mathcal{X}^{\prime}$ is defined to be a bijection $\varphi:\mathcal{R}\rightarrow \mathcal{R}^{\prime}$ such that $c_{rs}^t=c_{r^{\varphi}s^{\varphi}}^{t^{\varphi}}$ for every $r,s,t\in \mathcal{R}$.

An \emph{isomorphism} from $\mathcal{X}$ to $\mathcal{X}^{\prime}$ is defined to be a bijection $f:V\rightarrow V^{\prime}$ such that $\mathcal{R}^{\prime}=\mathcal{R}^f$, where $\mathcal{R}^f=\{r^f:~r\in \mathcal{R}\}$ and $r^f=\{(\alpha^f,\beta^f):~(\alpha,\beta)\in r\}$. The group of all isomorphisms from $\mathcal{X}$ onto itself has a normal subgroup
$$\aut(\mathcal{X})=\{f\in \sym(V): r^f=r~\text{for every}~r\in \mathcal{R}\}.$$
called the \emph{automorphism group} of $\mathcal{X}$. If $\aut(\mathcal{X})$ is transitive then $\mathcal{X}$ is a scheme. If $\Gamma$ is a digraph then 
$$\aut(\Gamma)=\aut(\WL(\Gamma)).~\eqno(1)$$

Every isomorphism of coherent configurations induces in a natural way the algebraic isomorphism of them. However, not every algebraic isomorphism is induced by a combinatorial one (see~\cite[Section~2.3.4]{CP}). A coherent configuration is called \emph{separable} if every algebraic isomorphism from it to another coherent configuration is induced by an isomorphism. 

Let $\alpha\in V$. The \emph{one-point extension} or \emph{$\alpha$-extension} $\mathcal{X}_{\alpha}=(V,\mathcal{R}_{\alpha})$ of $\mathcal{X}$ is defined to be the smallest coherent configuration on $V$ such that $\mathcal{R}\subseteq \mathcal{R}_{\alpha}^{\cup}$ and $\{(\alpha,\alpha)\}\in \mathcal{R}_{\alpha}$.

\begin{lemm}\label{wldim}
Let $\Gamma$ be a digraph and $\mathcal{X}=\WL(\Gamma)$. The following statements hold:

$(1)$ $\dimwl(\Gamma)\leq 2$ if and only if $\mathcal{X}$ is separable;

$(2)$ if $\mathcal{X}_{\alpha}$ is separable for some $\alpha\in V$ then $\dimwl(\Gamma)\leq 3$.
\end{lemm}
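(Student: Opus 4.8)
\textbf{Proof proposal for Lemma~\ref{wldim}.}

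The plan is to reduce both statements to known characterizations of the $m$-dimensional Weisfeiler-Leman algorithm in terms of coherent configurations. The starting point is the standard fact that the output of the $2$-dimensional WL algorithm on $\Gamma$ is precisely the coherent configuration $\mathcal{X}=\WL(\Gamma)$, and that two graphs $\Gamma$, $\Gamma'$ are not distinguished by the $2$-dimensional WL algorithm if and only if their WL-closures $\mathcal{X}$, $\mathcal{X}'$ are algebraically isomorphic via an algebraic isomorphism mapping the arc relation of $\Gamma$ to that of $\Gamma'$. For statement~(1), I would argue as follows. If $\mathcal{X}$ is separable, then any graph $\Gamma'$ indistinguishable from $\Gamma$ by $2$-dim WL yields an algebraic isomorphism $\varphi:\mathcal{X}\to\mathcal{X}'=\WL(\Gamma')$ respecting the arc relations; by separability $\varphi$ is induced by a combinatorial isomorphism $f:V\to V'$, and since $f$ maps the arc set of $\Gamma$ to that of $\Gamma'$, we get $\Gamma\cong\Gamma'$. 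Hence $\Gamma$ is identified by the $2$-dim WL algorithm, i.e. $\dimwl(\Gamma)\le 2$. Conversely, if $\mathcal{X}$ is not separable, there is an algebraic isomorphism $\varphi:\mathcal{X}\to\mathcal{X}'$ not induced by any isomorphism; realizing $\mathcal{X}'$ as $\WL(\Gamma')$ for a suitable $\Gamma'$ (choosing the arc set to be the $\varphi$-image of $E$) produces a graph not isomorphic to $\Gamma$ but indistinguishable by $2$-dim WL, so $\dimwl(\Gamma)\ge 3$.

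For statement~(2), the plan is to use the well-known translation between the $(m{+}1)$-dimensional WL algorithm and one-point extensions: roughly, the $3$-dim WL algorithm on $\Gamma$ distinguishes $\Gamma$ from $\Gamma'$ provided that for every vertex $\alpha\in V$ there is a vertex $\alpha'\in V'$ such that the $\alpha$-extension $\mathcal{X}_\alpha$ and the $\alpha'$-extension $\mathcal{X}'_{\alpha'}$ are themselves distinguished by the $2$-dim WL algorithm (after a compatible initial coloring). So suppose $\mathcal{X}_\alpha$ is separable for some $\alpha$, and let $\Gamma'$ be a graph indistinguishable from $\Gamma$ by the $3$-dim WL algorithm. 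Then in particular $\Gamma$ and $\Gamma'$ are $2$-dim WL equivalent, giving an algebraic isomorphism $\varphi:\mathcal{X}\to\mathcal{X}'$; moreover, by the $3$-dim equivalence there is $\alpha'\in V'$ for which $\varphi$ extends to an algebraic isomorphism $\varphi_\alpha:\mathcal{X}_\alpha\to\mathcal{X}'_{\alpha'}$ between the one-point extensions mapping $\{(\alpha,\alpha)\}$ to $\{(\alpha',\alpha')\}$. By separability of $\mathcal{X}_\alpha$, this $\varphi_\alpha$ is induced by an isomorphism $f:V\to V'$; since $f$ respects the relation containing $E$, we get $\Gamma\cong\Gamma'$, whence $\dimwl(\Gamma)\le 3$.

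The main obstacle, and the part that requires care rather than routine checking, is making the dictionary between ``the $(m{+}1)$-dimensional WL algorithm'' and ``$m$-point extensions of coherent configurations'' precise and correctly quoting it from the literature (e.g. \cite{Grohe,CP}). In particular one must be careful that the $3$-dim WL algorithm corresponds not to a single one-point extension but to the simultaneous behavior over all choices of the distinguished point, and that the initial coloring of the extended structures is handled so that the arc relation of $\Gamma$ remains recognizable; equation~(1) and the stability properties of $\WL(\Gamma)$ under individualization are what guarantee this. Once this correspondence is cited correctly, both implications are short formal arguments of the ``algebraic isomorphism $\Rightarrow$ (by separability) combinatorial isomorphism'' type, as sketched above, and no computation is involved.
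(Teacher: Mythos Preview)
Your outline for (1) is essentially what the paper does, namely invoke \cite[Theorem~2.5]{FKV}; you are just unpacking the content of that citation. One caution on your converse direction: from a non-induced algebraic isomorphism $\varphi:\mathcal{X}\to\mathcal{X}'$ you construct $\Gamma'$ with arc set $\varphi(E)$ and assert $\Gamma\not\cong\Gamma'$, but this does not follow directly---there could be a \emph{different} combinatorial isomorphism $f$ inducing some $\psi\neq\varphi$ with $\psi(E)=\varphi(E)$. This subtlety is handled inside the FKV argument, so as long as you actually cite that result (as you indicate you will) rather than rely on your sketch, you are fine.

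For (2) your route is correct but differs from the paper's. You argue directly via the dictionary between $3$-dimensional WL and one-point extensions, which, as you yourself note, requires some care to state precisely. The paper instead avoids that dictionary entirely: it observes that $\mathcal{X}_\alpha=\WL(\Gamma_\alpha)$, where $\Gamma_\alpha$ is $\Gamma$ with the vertex $\alpha$ individualized, applies part~(1) to $\Gamma_\alpha$ to get $\dimwl(\Gamma_\alpha)\le 2$, and then uses the general inequality $\dimwl(\Gamma)\le\dimwl(\Gamma_\alpha)+1$ from \cite[Eq.~(41)]{P}. This two-line reduction is cleaner because it reuses (1) and a single cited inequality, whereas your approach needs the full ``$3$-WL versus point-extension'' correspondence spelled out. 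Both arguments are valid; the paper's buys brevity, yours buys a more self-contained explanation of why separability of the extension is the right hypothesis.
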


\begin{proof}
The first statement of the lemma follows from~\cite[Theorem~2.5]{FKV}. Let us prove the second one. Note that $\mathcal{X}_{\alpha}=\WL(\Gamma_{\alpha})$, where $\Gamma_{\alpha}$ is the digraph obtained from $\Gamma$ by the individualization of the vertex~$\alpha$. Since $\mathcal{X}_{\alpha}$ is separable, \cite[Theorem~2.5]{FKV} implies that $\dimwl(\Gamma_{\alpha})\leq 2$. Therefore, $\dimwl(\Gamma)\leq \dimwl(\Gamma_{\alpha})+1\leq 3$ by~\cite[Eq.~(41)]{P}.
\end{proof}

\begin{lemm}\label{dim1}
If $\Gamma$ is a regular graph such that $\dimwl(\Gamma)=1$ then $\Gamma$ is strongly regular, i.e. $\rkwl(\Gamma)\leq 3$.
\end{lemm}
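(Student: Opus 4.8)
The plan is to convert the hypothesis $\dimwl(\Gamma)=1$ into a purely combinatorial condition and then classify the graphs satisfying it. First I would recall that the $1$-dimensional Weisfeiler-Leman algorithm is ordinary color refinement, and that on a regular graph it stabilizes immediately: every vertex ends up with the same color. Hence the stable coloring produced by the $1$-WL algorithm on a regular graph depends only on the number of vertices $n=|V(\Gamma)|$ and the valency $k$ of $\Gamma$. Consequently, if $\dimwl(\Gamma)=1$, then $\Gamma$ is, up to isomorphism, the unique $k$-regular graph on $n$ vertices: any other such graph would be $1$-WL-indistinguishable from $\Gamma$ and hence isomorphic to it. So it suffices to prove that a regular graph which is uniquely determined (up to isomorphism) by its order and valency is strongly regular.

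Next I would cut down the range of the valency. Complementation is a bijection on the set of isomorphism classes of regular graphs of a fixed order $n$; moreover $\Gamma$ is uniquely determined by $(n,k)$ if and only if $\overline{\Gamma}$ is uniquely determined by $(n,n-1-k)$, and $\Gamma$ is strongly regular if and only if $\overline{\Gamma}$ is. The values $k\in\{0,1\}$ give $\Gamma=\overline{K_n}$ and $\Gamma=\frac{n}{2}K_2$, and the values $k\in\{n-2,n-1\}$ give $\Gamma=\overline{\frac{n}{2}K_2}$ and $\Gamma=K_n$; all four families are strongly regular (hence of WL-rank at most~$3$), so these are handled. Thus one may assume $2\le k\le n-3$.

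The core of the argument is then to show that for every admissible pair $(n,k)$ with $2\le k\le n-3$ and $nk$ even, with the single exception $(n,k)=(5,2)$ (which forces $\Gamma=\mathcal{C}_5=\Paley(5)$, a strongly regular graph of WL-rank~$3$), there exist two non-isomorphic $k$-regular graphs on $n$ vertices, contradicting the uniqueness obtained in the first step. By the complementation symmetry of this range I may assume $2k\le n$. For $k=2$ one compares $\mathcal{C}_n$ with $\mathcal{C}_3\cup\mathcal{C}_{n-3}$, distinguished, say, by containing a triangle. For $k\ge 3$ and $n$ large relative to $k$ one compares a triangle-free $k$-regular graph (a $k$-regular bipartite graph when $n$ is even, or a suitable triangle-free circulant otherwise) with the disjoint union $K_{k+1}\cup H$, where $H$ is a $k$-regular graph on $n-k-1$ vertices: the second contains a triangle and the first does not. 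The remaining pairs — those with $n$ odd and $k$ close to $n/2$, where neither a triangle-free $k$-regular graph nor a clique component of the admissible size is available — form a constrained family; for these one produces the two non-isomorphic graphs directly (for instance two circulants distinguished by the number of triangles through a vertex, or a vertex-transitive graph together with a non-vertex-transitive one), or one invokes known enumeration results for regular graphs. Collecting the cases shows that the only regular graphs uniquely determined by their order and valency are $\overline{K_n}$, $K_n$, $\frac{n}{2}K_2$, $\overline{\frac{n}{2}K_2}$ and $\mathcal{C}_5$ (with $\mathcal{C}_3=K_3$ and $\mathcal{C}_4=K_{2,2}=\overline{2K_2}$ among them), each strongly regular, which gives the lemma.

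I expect the main obstacle to be exactly this last, "dense'' range: when $n$ is odd and $k$ is near $n/2$, both clean non-isomorphism witnesses used above (the presence or absence of a triangle, and a clique component) are unavailable, the graph is typically far from strongly regular, and one must exhibit the second graph and certify non-isomorphism by a more delicate invariant. The rest — the reduction to a combinatorial statement, the complementation step, and the small-valency and "sparse'' cases — is routine.
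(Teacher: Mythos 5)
Your overall reduction is sound and is, in essence, the content of the result the paper simply cites: the paper's proof of this lemma is a one-line appeal to the classification of regular graphs of WL-dimension~$1$ in~[AKRV, Lemma~3.1(a)], whereas you attempt to reprove that classification from scratch. Your first step is correct: on a regular graph colour refinement stabilizes with a monochromatic colouring determined by $(n,k)$, so $\dimwl(\Gamma)=1$ forces $\Gamma$ to be the unique $k$-regular graph on $n$ vertices, and the lemma reduces to showing that such uniqueness only happens for $\overline{K_n}$, $K_n$, $mK_2$, $\overline{mK_2}$ and $\mathcal{C}_5$ (all of WL-rank at most~$3$). The complementation step and the cases $k\in\{0,1,2,n-3,n-2,n-1\}$ are fine.

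The genuine gap is that the uniqueness classification is not actually completed in the range you yourself identify as critical, and your delineation of that range is slightly off. Your witness pair for $3\le k$, namely a triangle-free $k$-regular graph versus $K_{k+1}\cup H$, needs $n-k-1\ge k+1$, i.e.\ $n\ge 2k+2$, so after the reduction to $2k\le n$ the boundary cases $n=2k$ and $n=2k+1$ are not covered by the stated constructions (not only the odd ones). Worse, in part of the odd dense range the triangle-free witness provably does not exist: for $n$ odd and $k>2n/5$ a triangle-free graph of minimum degree $k$ is bipartite (Andr\'asfai), which is impossible for a regular graph of odd order, so e.g.\ there is no triangle-free $4$-regular graph on $9$ vertices. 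At this point your proposal only gestures at a fix (``produce the two graphs directly \dots or invoke known enumeration results''), which is exactly the step that needs proof. A cleaner way to close it: for all $n\ge 2k+2$ with $nk$ even there exist both a connected and a disconnected $k$-regular graph on $n$ vertices (take $K_{k+1}$, or $K_{k+2}$ minus a perfect matching when parity requires, as one component), so connectivity is the non-isomorphism witness; the two leftover values $n\in\{2k,2k+1\}$ with $k\ge 3$ then need only an ad hoc pair, e.g.\ two circulants with different triangle counts (for $n=2k$ one may also take $K_{k,k}$ against a circulant containing a triangle). Without such an argument, or without simply citing the known classification as the paper does, the proof is incomplete.
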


\begin{proof}
The statement of the lemma follows from the description of all regular graphs of WL-dimension~$1$~\cite[Lemma~3.1 (a)]{AKRV}.
\end{proof}

\section{$S$-rings}

If $\Gamma$ is a Cayley graph over a group $G$ then the WL-closure of $\Gamma$ can be thought as a special subring of the group ring $\mathbb{Z}G$, so-called $S$-ring. A using of the $S$-ring theory  simplifies substantially the studying of WL-closures of Cayley graphs. In this section we provide a background of $S$-rings. We use the notations and terminology from~\cite{Ry}.

\subsection{Definitions}
Let $G$ be a finite group and $\mathbb{Z}G$  the integer group ring. The identity element of $G$ is denoted by~$e$. The set of all of non-identity elements of $G$ is denoted by $G^\#$. If $X\subseteq G$ then the element $\sum_{x\in X} {x}$ of the group ring $\mathbb{Z}G$ is denoted by $\underline{X}$. The set $\{x^{-1}:x\in X\}$ is denoted by $X^{-1}$.

A subring  $\mathcal{A}\subseteq \mathbb{Z} G$ is called an \emph{$S$-ring} (a \emph{Schur} ring) over $G$ if there exists a partition $\mathcal{S}=\mathcal{S}(\mathcal{A})$ of~$G$ such that:

$(1)$ $\{e\}\in\mathcal{S}$;

$(2)$  if $X\in\mathcal{S}$ then $X^{-1}\in\mathcal{S}$;

$(3)$ $\mathcal{A}=\Span_{\mathbb{Z}}\{\underline{X}:\ X\in\mathcal{S}\}$.

\noindent The notion of an $S$-ring goes back to I.~Schur~\cite{Schur} and H.~Wielandt~\cite{Wi}. The elements of $\mathcal{S}$ are called the \emph{basic sets} of  $\mathcal{A}$ and the number $\rk(\mathcal{A})=|\mathcal{S}|$ is called the \emph{rank} of~$\mathcal{A}$. The $S$-ring of rank~$2$ over $G$ is denoted by $\mathcal{T}_G$. Clearly, $\mathcal{S}(\mathcal{T}_G)=\{\{e\},G^\#\}$. If $X,Y\in \mathcal{S}(\mathcal{A})$ then $XY\in \mathcal{S}(\mathcal{A})$ whenever $|X|=1$ or $|Y|=1$.

Let $X,Y\in\mathcal{S}$. If $Z\in \mathcal{S}$ then the number of distinct representations of $z\in Z$ in the form $z=xy$ with $x\in X$ and $y\in Y$ does not depend on the choice of $z\in Z$. Denote this number by $c^Z_{XY}$. One can see that $\underline{X}~\underline{Y}=\sum_{Z\in \mathcal{S}(\mathcal{A})}c^Z_{XY}\underline{Z}$. Therefore the numbers  $c^Z_{XY}$ are the structure constants of $\mathcal{A}$ with respect to the basis $\{\underline{X}:\ X\in\mathcal{S}\}$. From~\cite[Eq.~(2.1.14)]{CP} it follows that 
$$|Z|c^{Z^{-1}}_{XY}=|X|c^{X^{-1}}_{YZ}=|Y|c^{Y^{-1}}_{ZX}~\eqno(2)$$
for all $X,Y,Z\in \mathcal{S}(\mathcal{A})$. 

A set $X \subseteq G$ is called an \emph{$\mathcal{A}$-set} if $\underline{X}\in \mathcal{A}$. A subgroup $H \leq G$ is called an \emph{$\mathcal{A}$-subgroup} if $H$ is an $\mathcal{A}$-set. An $\mathcal{A}$-subgroup $H$ is called \emph{nontrivial} if $\{e\}<H<G$. For a set $X\subseteq G$, put $\rad(X)=\{g\in G:\ gX=Xg=X\}$. One can check that for every $\mathcal{A}$-set $X$, the groups $\langle X \rangle$ and $\rad(X)$ are $\mathcal{A}$-subgroups. The $S$-ring $\mathcal{A}$ is called \emph{primitive} if there are no nontrivial proper $\mathcal{A}$-subgroups of $G$.

Let $L\leq U$ be $\mathcal{A}$-subgroups. The straightforward computation shows that
$$(\underline{U}-\underline{L})^2=(|U|-2|L|)\underline{U\setminus L}+(|U|-|L|)\underline{L}.~\eqno(3)$$

Let $L \unlhd U\leq G$. A section $U/L$ is called an \emph{$\mathcal{A}$-section} if $U$ and $L$ are $\mathcal{A}$-subgroups. If $S=U/L$ is an $\mathcal{A}$-section then the module
$$\mathcal{A}_S=Span_{\mathbb{Z}}\left\{\underline{X}^{\pi}:~X\in\mathcal{S}(\mathcal{A}),~X\subseteq U\right\},$$
where $\pi:U\rightarrow U/L$ is the canonical epimorphism, is an $S$-ring over $S$.

Let $K \leq \aut(G)$. The partition of $G$ into the orbits of $K$ defines an  $S$-ring $\mathcal{A}$ over~$G$.  In this case  $\mathcal{A}$ is called \emph{cyclotomic} and denoted by $\cyc(K,G)$. Let $G\cong C_p$ for some prime~$p$. The group $\aut(G)$ is a cyclic group of order~$p-1$. So for every divisor $k$ of $p-1$ there exists a unique subgroup of $\aut(G)$ of order~$k$. For a given divisor $k$ of $p-1$, put $\cyc(k,G)=\cyc(K,G)$, where $K\leq \aut(G)$ such that $|K|=k$.

\subsection{$S$-rings and Cayley schemes}
 A scheme $\mathcal{X}=(G,\mathcal{R})$ is called a \emph{Cayley scheme} over $G$ if $\aut(\mathcal{X})\geq G_{right}$, where $G_{right}$ is the subgroup of $\sym(G)$ induced by the right multiplications of $G$. The Cayley scheme $\mathcal{X}$ is called \emph{normal} if $G_{right}$ is normal in $\aut(\mathcal{X})$.

There is a one-to-one correspondence between  $S$-rings and Cayley schemes over $G$. Namely, if $\mathcal{A}$ is an $S$-ring over $G$ then the  pair 
$$\mathcal{X}(\mathcal{A})=(G,\mathcal{R}(\mathcal{A})),$$ 
where $\mathcal{R}(\mathcal{A})=\{r(X):X\in \mathcal{S}(\mathcal{A})\}$ and $r(X)=\{(g,xg): g\in G, x\in X\}$, is a Cayley scheme over~$G$. Conversely, if $\mathcal{X}=(G,\mathcal{R})$ is a Cayley scheme over $G$ then 
$$\mathcal{S}(\mathcal{X})=\{X(r):r\in \mathcal{R}\},$$ 
where $X(r)=\{x\in G: (e,x) \in r\}\subseteq G$, is a partition of $G$ that defines the $S$-ring $\mathcal{A}(\mathcal{X})$ over $G$. Clearly, 
$$\rk(\mathcal{X})=\rk(\mathcal{A}(\mathcal{X})).$$ 
For all $X,Y,Z\in \mathcal{S}(\mathcal{A})$, we have $c_{XY}^Z=c_{r(X)r(Y)}^{r(Z)}$. 

The \emph{automorphism group} $\aut(\mathcal{A})$ of the $S$-ring $\mathcal{A}$ is defined to be the group $\aut(\mathcal{X}(\mathcal{A}))$. It is easy to see that $\aut(\mathcal{T}_G)=\sym(G)$. We say that $\mathcal{A}$ is \emph{separable} (respectively, \emph{normal}) if $\mathcal{X}(\mathcal{A})$ is separable (respectively, normal). The $S$-rings $\mathcal{T}_G$ and $\mathbb{Z}G$ are separable.

\subsection{Generalized wreath and tensor products}

Let $S=U/L$ be an $\mathcal{A}$-section of $G$. The $S$-ring~$\mathcal{A}$ is called the \emph{$S$-wreath product} or \emph{generalized wreath product} of $\mathcal{A}_U$ and $\mathcal{A}_{G/L}$ if $L\trianglelefteq G$ and $L\leq\rad(X)$ for each basic set $X$ outside~$U$. In this case we write $\mathcal{A}=\mathcal{A}_U\wr_{S}\mathcal{A}_{G/L}$. If $L>\{e\}$ and $U<G$ then the $S$-wreath product is called \emph{nontrivial}. If $U=L$ then  $\mathcal{A}$ coincides with the \emph{wreath product} of $\mathcal{A}_L$ and $\mathcal{A}_{G/L}$ denoted by $\mathcal{A}_L\wr\mathcal{A}_{G/L}$. From~\cite[Eq.~3.4.11]{CP} it follows that 
$$\aut(\mathcal{A}_L\wr \mathcal{A}_{G/L})=\aut(\mathcal{A}_L)\wr \aut(\mathcal{A}_{G/L}).~\eqno(4)$$

Let $H$ and $V$ be $\mathcal{A}$-subgroups such that $G=H\times V$. The $S$-ring~$\mathcal{A}$ is called the \emph{tensor product} of $\mathcal{A}_H$ and $\mathcal{A}_V$ if 
$$\mathcal{S}(\mathcal{A})=\{X_1\times X_2:~X_1\in\mathcal{S}(\mathcal{A}_H),~X_2\in \mathcal{S}(\mathcal{A}_V)\}.$$
In this case we write $\mathcal{A}=\mathcal{A}_H\otimes \mathcal{A}_V$. The tensor product is called \emph{nontrivial} if $\{e\}<H<G$ and $\{e\}<V<G$. It is easy to see that $\rk(\mathcal{A}_H\otimes \mathcal{A}_V)=\rk(\mathcal{A}_H)\rk(\mathcal{A}_V)$. From~\cite[Eq.~3.2.18]{CP} it follows that  
$$\aut(\mathcal{A}_H\otimes \mathcal{A}_V)=\aut(\mathcal{A}_H)\times \aut(\mathcal{A}_V).~\eqno(5)$$

\begin{lemm}\label{separ}
Let $\mathcal{A}_1$ and $\mathcal{A}_2$ be $S$-rings. The $S$-ring $\mathcal{A}_1*\mathcal{A}_2$, where $*\in\{\wr,\otimes\}$, is separable if and only if so are $\mathcal{A}_1$ and $\mathcal{A}_2$.
\end{lemm}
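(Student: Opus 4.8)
The plan is to prove both directions separately, using the two product decompositions uniformly where possible but treating them separately where the combinatorics differ. Recall that separability of a coherent configuration $\mathcal{X}$ means every algebraic isomorphism $\varphi\colon\mathcal{X}\to\mathcal{X}'$ is induced by a combinatorial isomorphism; via the dictionary between $S$-rings and Cayley schemes this transfers verbatim to $\mathcal{A}=\mathcal{A}_1*\mathcal{A}_2$.

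\textbf{The easy direction ($\Rightarrow$).} Suppose $\mathcal{A}=\mathcal{A}_1*\mathcal{A}_2$ is separable; I want to conclude $\mathcal{A}_1$ and $\mathcal{A}_2$ are separable. The standard tool is that a section subconfiguration of a separable configuration is separable (this is folklore in the Ponomarenko school; if the excerpt does not grant it explicitly I would cite \cite{CP}). Concretely, for $\mathcal{A}=\mathcal{A}_U\wr_S\mathcal{A}_{G/L}$ one has $\mathcal{A}_1=\mathcal{A}_U=\mathcal{A}_{U/\{e\}}$ and $\mathcal{A}_2=\mathcal{A}_{G/L}$, both $\mathcal{A}$-sections; for $\mathcal{A}=\mathcal{A}_H\otimes\mathcal{A}_V$ both factors are $\mathcal{A}$-sections as well. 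So this direction reduces to the statement that separability passes to sections, which I would either invoke or prove by a short lifting argument: an algebraic isomorphism of a section $\mathcal{A}_S$ can be extended to an algebraic isomorphism of the whole ring (using that basic sets outside the section are determined and mapped canonically), then pulled back to a combinatorial isomorphism by separability of $\mathcal{A}$, whose restriction induces the required combinatorial isomorphism of $\mathcal{A}_S$.

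\textbf{The hard direction ($\Leftarrow$).} Assume $\mathcal{A}_1,\mathcal{A}_2$ separable; show $\mathcal{A}_1*\mathcal{A}_2$ separable. Take an algebraic isomorphism $\varphi\colon\mathcal{A}_1*\mathcal{A}_2\to\mathcal{B}$. The first step is to show $\mathcal{B}$ inherits the same product structure: the $\mathcal{A}$-subgroups $H,V$ (tensor case) or the section $U/L$ (wreath case) are definable in terms of intersection numbers alone, so $\varphi$ carries them to analogous $\mathcal{B}$-subgroups, forcing $\mathcal{B}=\mathcal{B}_1*\mathcal{B}_2$ with $\varphi$ restricting to algebraic isomorphisms $\varphi_i\colon\mathcal{A}_i\to\mathcal{B}_i$. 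Identity~(3) in the excerpt, $(\underline U-\underline L)^2=(|U|-2|L|)\underline{U\setminus L}+(|U|-|L|)\underline L$, is exactly the kind of structure-constant identity that detects $\mathcal{A}$-subgroups and should be used here to show that the relevant subgroups and their orders are algebraic invariants. By separability of $\mathcal{A}_1,\mathcal{A}_2$ each $\varphi_i$ is induced by a combinatorial isomorphism $f_i$. The final step is to glue $f_1,f_2$ into a single combinatorial isomorphism inducing $\varphi$: for the tensor product $f=f_1\times f_2$ works immediately since basic sets are products $X_1\times X_2$; for the generalized wreath product the gluing is more delicate because the two section isomorphisms overlap on $S=U/L$, and one must check that $f_1$ and $f_2$ can be chosen to agree on $S$ (or be corrected by an automorphism of $\mathcal{A}_S$) before assembling them on $G$ and verifying that basic sets outside $U$, which have $L$ in their radical, are mapped correctly.

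\textbf{Main obstacle.} The serious point is the consistency/gluing step in the wreath case: producing a single $f$ on $G$ from the two section isomorphisms, which requires that the combinatorial isomorphisms supplied by separability of $\mathcal{A}_1$ and $\mathcal{A}_2$ be compatible over the common section $S$. This is typically handled by first matching the induced isomorphisms of $\mathcal{A}_S$ and then using transitivity of $\aut(\mathcal{A}_S)$-type corrections, plus the fact that for basic sets $X$ outside $U$ one has $L\le\rad(X)$ so the value of $f$ on such $X$ is forced by its behaviour on $G/L$. I would structure the proof so that the tensor case is dispatched quickly and the wreath case gets the careful treatment, citing \cite[Section~3.4]{CP} and the surrounding lemmas for the technical gluing facts rather than reproving them.
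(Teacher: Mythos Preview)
Your plan is sound in outline, but it is substantially more than what the paper does: the paper's entire proof is a one-line citation to \cite[Corollary~3.2.24, Theorem~3.4.9]{CP}, which already state the separability of tensor and wreath products in the needed form. So rather than a different mathematical route, you are essentially sketching the proofs of those cited theorems. That is a reasonable thing to do if one wants the paper to be more self-contained, but it is not what the paper chooses.

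One point worth correcting in your sketch: the lemma as stated concerns only the ordinary wreath product $\wr$ (the case $U=L$), not the generalized wreath product $\wr_S$. In the ordinary case the section $S=U/L$ is trivial, so the two factors $\mathcal{A}_1=\mathcal{A}_L$ and $\mathcal{A}_2=\mathcal{A}_{G/L}$ share no nontrivial overlap, and the ``main obstacle'' you flag---making the two combinatorial isomorphisms agree over the common section---simply does not arise. The gluing for the ordinary wreath product is direct: lift $f_2$ on $G/L$ to a bijection of coset systems and apply a copy of $f_1$ inside each $L$-coset. Your treatment is therefore more general than required, and the delicate compatibility issue you highlight is absent from the case actually needed here.
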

\begin{proof}
The statement of the lemma follows from~\cite[Corollary~3.2.24, Theorem~3.4.9]{CP}.
\end{proof}

\section{Cayley graphs}

Let $S\subseteq G$ with $e\notin S$, $\Gamma=\cay(G,S)$ a Cayley digraph over $G$, and $\mathcal{X}=\WL(\Gamma)$. Since $\aut(\Gamma)\geq G_{right}$ and, due to Eq.~(1), $\aut(\mathcal{X})=\aut(\Gamma)$, we conclude that $\mathcal{X}$ is a Cayley scheme. Let $\mathcal{A}=\mathcal{A}(\mathcal{X})$. One can check that $\mathcal{A}$ is the smallest $S$-ring over $G$ for which $S$ is an $\mathcal{A}$-set. So we obtain the following statement.

\begin{lemm}\label{rankring}
Let $G$ be a finite group, $S\subseteq G$ with $e\notin S$, and $\Gamma=\cay(G,S)$. Suppose that $\mathcal{A}$ is the smallest $S$-ring over $G$ for which $S$ is an $\mathcal{A}$-set. Then $\rkwl(\Gamma)=\rk(\mathcal{A})$.
\end{lemm}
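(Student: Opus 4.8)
The plan is to use the one-to-one correspondence between $S$-rings and Cayley schemes over $G$ from Section~4, which is order preserving, together with the equality $\rk(\mathcal{X})=\rk(\mathcal{A}(\mathcal{X}))$ valid for a Cayley scheme $\mathcal{X}$. As observed just before the statement, $\mathcal{X}=\WL(\Gamma)$ is a Cayley scheme over $G$; put $\mathcal{A}_0=\mathcal{A}(\mathcal{X})$. It then suffices to prove that $\mathcal{A}_0$ is the smallest $S$-ring over $G$ for which $S$ is an $\mathcal{A}$-set, since in that case $\mathcal{A}_0=\mathcal{A}$ and hence $\rkwl(\Gamma)=\rk(\mathcal{X})=\rk(\mathcal{A}_0)=\rk(\mathcal{A})$.

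First I would check that $S$ is an $\mathcal{A}_0$-set. The arc set of $\Gamma$ is precisely $E=r(S)$ in the notation of Section~4, and the map $X\mapsto r(X)$ is injective and satisfies $r\bigl(\bigcup_i X_i\bigr)=\bigcup_i r(X_i)$. Since $\mathcal{X}$ is a Cayley scheme, its basis relations are exactly the $r(X)$ with $X\in\mathcal{S}(\mathcal{A}_0)$, so the defining property $E\in\mathcal{R}^{\cup}$ of $\WL(\Gamma)$ forces $S$ to be a union of basic sets of $\mathcal{A}_0$, i.e.\ $\underline{S}\in\mathcal{A}_0$.

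Next I would establish minimality. Let $\mathcal{B}$ be an arbitrary $S$-ring over $G$ with $\underline{S}\in\mathcal{B}$, and let $\mathcal{X}(\mathcal{B})$ be the corresponding Cayley scheme. Writing $S$ as a union of basic sets of $\mathcal{B}$ and applying $r$ shows that $E=r(S)$ is a union of basis relations of $\mathcal{X}(\mathcal{B})$, hence $E\in\mathcal{R}(\mathcal{B})^{\cup}$. By minimality of $\WL(\Gamma)$ in the partial order on coherent configurations, $\mathcal{X}\leq\mathcal{X}(\mathcal{B})$; unwinding this through the order-preserving correspondence of Section~4 yields that every basic set of $\mathcal{A}_0$ is a union of basic sets of $\mathcal{B}$, that is $\mathcal{A}_0\subseteq\mathcal{B}$. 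Thus $\mathcal{A}_0$ contains $\underline{S}$ and is contained in every $S$-ring with this property, so it is the smallest such $S$-ring and coincides with $\mathcal{A}$; the claim follows.

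I do not expect a genuine obstacle here: the argument is essentially bookkeeping. The step requiring the most care is the faithful translation between the combinatorial side (basis relations, $\mathcal{R}^{\cup}$, the partial order $\leq$ on coherent configurations) and the $S$-ring side (basic sets, the condition $\underline{S}\in\mathcal{A}$, inclusion of subrings of $\mathbb{Z}G$) --- in particular the facts that $X\mapsto r(X)$ is injective and commutes with unions and that $\mathcal{A}\mapsto\mathcal{X}(\mathcal{A})$ respects the two partial orders. One should also remark in passing that the phrase ``the smallest $S$-ring for which $S$ is an $\mathcal{A}$-set'' is legitimate: such an $S$-ring exists because the argument above exhibits $\mathcal{A}_0$ as a least element of the family in question (alternatively, because an intersection of $S$-rings over $G$ is again an $S$-ring over $G$).
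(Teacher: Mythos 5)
Your argument is correct and follows essentially the same route as the paper: the paper also notes that $\WL(\Gamma)$ is a Cayley scheme (via $\aut(\WL(\Gamma))=\aut(\Gamma)\geq G_{right}$), identifies $\mathcal{A}$ with $\mathcal{A}(\WL(\Gamma))$, and concludes $\rkwl(\Gamma)=\rk(\mathcal{A})$ from the rank-preserving correspondence between $S$-rings and Cayley schemes. Your write-up merely makes explicit the ``one can check'' minimality step, translating the partial order on coherent configurations into inclusion of $S$-rings, which is exactly the intended bookkeeping.
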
 

Further throughout the paper for a given Cayley digraph $\Gamma=\cay(G,S)$, we will denote by $\WL(\Gamma)$ the smallest $S$-ring $\mathcal{A}$ over $G$ for which $S$ is an $\mathcal{A}$-set. If $T\subseteq G$ then the smallest $S$-ring $\mathcal{A}$ such that $T$ is an $\mathcal{A}$-set is denoted by $\WL(T)$.

The next statement provides a criterion for a Cayley graph to be a Deza graph. 

\begin{lemm}\label{deza}
Let $G$ be a group of order~$n$, $S\subseteq G$ such that $e\notin S$, $S=S^{-1}$, and $|S|=k$, and $\Gamma=\cay(G,S)$. The graph $\Gamma$ is a Deza graph with parameters $(n,k,b,a)$ if and only if $\underline{S}^2=a\underline{A}+b\underline{B}+ke$, where $A\cup B\cup \{e\}=G$ and $A\cap B=\varnothing$. Moreover, $\Gamma$ is strongly regular if and only if $A=S$ or $B=S$.
\end{lemm}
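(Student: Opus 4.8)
The plan is to prove this by translating the combinatorial condition on common neighbors into an algebraic identity in the group ring $\mathbb{Z}G$. The central observation is that for a Cayley graph $\Gamma=\cay(G,S)$ with $S=S^{-1}$, the number of common neighbors of two vertices $g$ and $h$ equals the number of ways to write $gh^{-1}$ (or $g^{-1}h$, depending on the convention fixed in the arc set) as a product $s_1 s_2^{-1}=s_1 s_2$ with $s_1,s_2\in S$; this count is precisely the coefficient of the corresponding group element in the product $\underline{S}\cdot\underline{S}$. Since $S=S^{-1}$, we have $\underline{S}^2=\underline{S}\,\underline{S^{-1}}$, and the coefficient at $e$ in this product is $|S|=k$ (the number of representations $e=s s^{-1}$). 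So $\underline{S}^2=ke+\sum_{g\in G^{\#}}\lambda_g\, g$, where $\lambda_g$ is the number of common neighbors of any two vertices at "difference" $g$.

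First I would make the correspondence between vertex pairs and group elements precise, being careful about left versus right and about the fact that in a Cayley \emph{graph} the common-neighbor count depends only on $g^{-1}h$; then I would identify $\lambda_g$ with the structure coefficient and note $\lambda_e=k$. Second, I would observe that $\Gamma$ being a Deza graph with parameters $(n,k,b,a)$ means exactly that each $\lambda_g$ for $g\ne e$ takes one of the two values $a$ or $b$; letting $A=\{g\in G^{\#}:\lambda_g=a\}$ and $B=\{g\in G^{\#}:\lambda_g=b\}$ (with the convention that if $a=b$ we may put everything in one part), we get $\underline{S}^2=a\underline{A}+b\underline{B}+ke$ with $A\cup B\cup\{e\}=G$ and $A\cap B=\varnothing$. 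Conversely, if such an identity holds, reading off coefficients shows every non-identity difference has $a$ or $b$ common neighbors, so $\Gamma$ is Deza; regularity of degree $k$ follows since $|S|=k$.

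For the ``moreover'' part, recall that $\Gamma$ is strongly regular precisely when the number of common neighbors of two distinct vertices depends only on whether they are adjacent, i.e. only on whether $g\in S$ or $g\in G^{\#}\setminus S$. If $A=S$ (hence $B=G^{\#}\setminus S$), then adjacent vertices have exactly $a$ common neighbors and non-adjacent distinct vertices have exactly $b$, which is the definition of strong regularity with $\lambda=a$, $\mu=b$; symmetrically if $B=S$. Conversely, if $\Gamma$ is strongly regular with parameters $\lambda,\mu$, then $\lambda_g=\lambda$ for $g\in S$ and $\lambda_g=\mu$ for $g\in G^{\#}\setminus S$, so the two-valued partition $\{A,B\}$ must be (up to swapping) exactly $\{S,\,G^{\#}\setminus S\}$, i.e. $A=S$ or $B=S$. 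A minor point to handle cleanly is the degenerate case $a=b$ (or $\lambda=\mu$): there the partition into $A,B$ is not unique, and one should phrase the statement so that $S$ \emph{can} be chosen as one of the parts.

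I do not expect any real obstacle here: the proof is essentially unwinding two definitions and matching coefficients in $\mathbb{Z}G$. The only place demanding care is bookkeeping — getting the left/right convention in the arc set $\{(g,sg)\}$ consistent with the common-neighbor count (so that the relevant quantity is the coefficient of $g^{-1}h$ in $\underline{S}^2$ rather than its inverse, which is harmless since $S=S^{-1}$ forces $\underline{S}^2$ to be symmetric anyway) and treating the $a=b$ edge case in the ``moreover'' direction so the biconditional is literally correct.
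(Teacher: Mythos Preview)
Your proposal is correct and follows essentially the same idea as the paper: the coefficient of $g$ in $\underline{S}^2$ counts the representations $g=s_1s_2$ with $s_1,s_2\in S$, which (since $S=S^{-1}$) is the number of common neighbors of any two vertices whose ``difference'' is $g$; hence the Deza condition is equivalent to these coefficients taking at most two values on $G^{\#}$, and the strongly regular condition is equivalent to the partition $\{A,B\}$ coinciding with $\{S,G^{\#}\setminus S\}$.

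The only difference is presentational: the paper does not spell this computation out but instead observes that $\underline{S}^2=a\underline{A}+b\underline{B}+ke$ is equivalent to the multiset $SS$ containing $a$ copies of each element of $A$, $b$ copies of each element of $B$, and $k$ copies of $e$, and then cites \cite[Proposition~2.1]{EFHHH}, which is exactly the criterion you reprove from scratch. So your argument is a self-contained version of what the paper obtains by citation; nothing is missing, and your remarks about the left/right convention and the degenerate case $a=b$ are appropriate caveats that do not affect the substance.
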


\begin{proof}
Note that $\underline{S}^2=a\underline{A}+b\underline{B}+ke$ if and only if the multiset  $SS=\{s_1s_2:~s_1,s_2\in S\}$ contains $a$ copies of each element of $A$, $b$ copies of each element of $B$, and $k$ copies of $e$. So the statement of the lemma follows from~\cite[Proposition~2.1]{EFHHH}.
\end{proof}

Let $p$ be a prime and $k$ a divisor of $p-1$ such that $\frac{p-1}{k}$ is even. The group $\aut(C_p)$ contains a unique subgroup $K$ of order $\frac{p-1}{k}$. Let $S$ be an orbit of $K$ and $\Gamma=\cay(G,S)$. One can check that $\Gamma$ is isomorphic to the \emph{generalized Paley graph} $\GPaley(p,\frac{p-1}{k})$ (see~\cite[Definition~1.1]{LP}). If $k=2$ then $\Gamma$ is isomorphic to the \emph{Paley graph} $\Paley(p)$.

Let $\Gamma_1=(V_1,E_1)$ and $\Gamma_2=(V_2,E_2)$ be graphs. The \emph{tensor product} $\Gamma_1\times \Gamma_2$ of $\Gamma_1$ and $\Gamma_2$ is defined to be the graph with vertex set $V=V_1\times V_2$ and edge set $E$ defined as follows:
$$((v_1,v_2),(u_1,u_2))\in E~\text{if and only if}~(v_1,u_1)\in E_1,~(v_2,u_2)\in E_2.$$
The next lemma follows from the definitions of the Cayley graph and the tensor product of graphs.

\begin{lemm}\label{tensproduct}
Let $\Gamma_1=\cay(G_1,S_1)$ and $\Gamma_2=\cay(G_2,S_2)$ be Cayley graphs over groups $G_1$ and $G_2$ respectively. Then 
$$\Gamma_1\times \Gamma_2=\cay(G_1\times G_2,S_1\times S_2).$$
\end{lemm}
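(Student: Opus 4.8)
The plan is to unwind both constructions at the level of vertices and of a single generic pair, since both sides of the claimed identity are graphs on the same vertex set $V_1\times V_2=G_1\times G_2$, so it suffices to match their edge relations.

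First I would check that $S_1\times S_2$ is an admissible connection set over $G_1\times G_2$: it is identity-free because $e\notin S_1$ already forces $(e,e)\notin S_1\times S_2$, and it is inverse-closed because $(s_1,s_2)^{-1}=(s_1^{-1},s_2^{-1})$ and each $S_i=S_i^{-1}$; hence $\cay(G_1\times G_2,S_1\times S_2)$ is a well-defined Cayley graph over $G_1\times G_2$. Next, fix vertices $(v_1,v_2)$ and $(u_1,u_2)$. By the definition of the tensor product, $\big((v_1,v_2),(u_1,u_2)\big)$ is an edge of $\Gamma_1\times\Gamma_2$ exactly when $(v_1,u_1)\in E_1$ and $(v_2,u_2)\in E_2$. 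By the definition of a Cayley digraph (arcs of the form $(g,sg)$), the condition $(v_i,u_i)\in E_i$ is equivalent to $u_iv_i^{-1}\in S_i$. Conjoining the two components, the pair is an edge of $\Gamma_1\times\Gamma_2$ if and only if $u_1v_1^{-1}\in S_1$ and $u_2v_2^{-1}\in S_2$, i.e. if and only if $(u_1,u_2)(v_1,v_2)^{-1}=(u_1v_1^{-1},u_2v_2^{-1})\in S_1\times S_2$, which is precisely the adjacency rule in $\cay(G_1\times G_2,S_1\times S_2)$. Therefore the two edge sets coincide and the graphs are equal.

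There is essentially no obstacle here, since the statement is a definitional identity. The only point to keep straight is the multiplication convention: the excerpt writes arcs of $\cay(G,S)$ as $(g,sg)$, so $(v,u)$ is an arc exactly when $uv^{-1}\in S$, and the same convention must be applied componentwise on the direct product. Because each $S_i$ is symmetric the orientation is immaterial for the undirected claim, but phrasing the verification through the condition $uv^{-1}\in S$ keeps it convention-clean and at the same time proves the digraph analogue verbatim.
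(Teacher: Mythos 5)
Your proof is correct and matches the paper's treatment: the paper simply remarks that the lemma follows from the definitions of the Cayley graph and the tensor product, and your argument is exactly that definitional verification (componentwise translation of adjacency into the condition $uv^{-1}\in S$), carried out explicitly.
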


Again, let $\Gamma_1=(V_1,E_1)$ and $\Gamma_2=(V_2,E_2)$ be graphs. The \emph{lexicographic product} $\Gamma_1[\Gamma_2]$ of $\Gamma_1$ and $\Gamma_2$ is defined to be the graph with vertex set $V=V_1\times V_2$ and edge set $E$ defined as follows:
$$((v_1,v_2),(u_1,u_2))\in E~\text{if and only if}~(v_1,u_1)\in E_1~\text{or}~v_1=u_1~\text{and}~(v_2,u_2)\in E_2.$$

\begin{lemm}\label{lexproduct}
Let $G$ be a group, $H\trianglelefteq G$, $\pi:G\rightarrow G/H$ the canonical epimorphism, and $\overline{G}=G^{\pi}$. Suppose that $\Gamma_1=\cay(\overline{G},\overline{T})$ and $\Gamma_2=\cay(H,S)$ are Cayley graphs over $\overline{G}$ and $H$ respectively. Then 
$$\Gamma_1[\Gamma_2]\cong \cay(G,(\overline{T})^{\pi^{-1}}\cup S).$$
\end{lemm}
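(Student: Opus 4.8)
\textbf{Proof plan for Lemma~\ref{lexproduct}.}
The plan is to verify directly from the definitions that the Cayley graph on the right has the correct vertex set, the correct edges, and that the claimed bijection is a graph isomorphism. First I would fix a transversal of $H$ in $G$, or rather work with the natural identification of $G$ as a set with $\overline{G}\times H$ via a section $s:\overline{G}\to G$ of $\pi$; concretely, write each $g\in G$ uniquely as $g=s(\overline{g})h$ with $\overline{g}=g^{\pi}$ and $h\in H$. Under this identification the vertex set of $\cay(G,(\overline{T})^{\pi^{-1}}\cup S)$ becomes $\overline{G}\times H$, which is exactly the vertex set of $\Gamma_1[\Gamma_2]$, so the bijection to be used is this identification $g\mapsto(\overline{g},h)$.

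Next I would unwind the edge relation. In $\cay(G,(\overline{T})^{\pi^{-1}}\cup S)$ two vertices $g_1,g_2$ are adjacent iff $g_2 g_1^{-1}\in(\overline{T})^{\pi^{-1}}\cup S$. Splitting on the two parts of the connection set: if $g_2g_1^{-1}\in S\subseteq H$, then $\overline{g_1}=\overline{g_2}$ and, writing $g_i=s(\overline{g_1})h_i$, the difference $g_2g_1^{-1}$ lies in $H$ and corresponds to $h_2 h_1^{-1}$ (here one uses that $S$ is a connection set inside $H$ and a short computation with the section, noting that $s(\overline{g_1})$ conjugates $H$ to itself since $H\trianglelefteq G$); this matches precisely the ``$v_1=u_1$ and $(v_2,u_2)\in E_2$'' clause of the lexicographic product. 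If instead $g_2g_1^{-1}\in(\overline{T})^{\pi^{-1}}$, then applying $\pi$ gives $\overline{g_2}\,\overline{g_1}^{-1}\in\overline{T}$, i.e.\ $(\overline{g_1},\overline{g_2})\in E_1$, and conversely any such pair is adjacent in $G$ because the connection set is the full $\pi$-preimage of $\overline{T}$; this matches the ``$(v_1,u_1)\in E_1$'' clause. One should also check the two parts of the connection set are disjoint in the relevant sense so that no double counting occurs, which holds because $e\notin\overline{T}$ forces $(\overline{T})^{\pi^{-1}}\cap H=\varnothing$.

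The only genuinely delicate point is the bookkeeping with the section $s$: the map $g\mapsto(\overline{g},h)$ is a bijection of sets but not a group homomorphism, so when transporting the connection set $S\subseteq H$ one must check that ``left difference in $G$ lies in $S$'' translates to ``second coordinates differ by an element of $S$'' uniformly in the first coordinate. This is where normality of $H$ is used: for fixed $\overline{g}$, the difference $s(\overline{g})h_2 h_1^{-1} s(\overline{g})^{-1}$ lies in $H$ because $H\trianglelefteq G$, and whether it lies in $S$ depends only on $h_2h_1^{-1}$ provided one chooses $S$ to be invariant under the relevant conjugations — or, more cleanly, one avoids this by noting that $\cay(H,S)$ up to isomorphism does not depend on replacing $S$ by $s(\overline{g})^{-1} S\, s(\overline{g})$ since conjugation by $s(\overline{g})$ is an automorphism of $H$, and these automorphisms patch together to give the asserted isomorphism. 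I expect this verification to be entirely routine once the identification $G\leftrightarrow\overline{G}\times H$ is set up, so the lemma follows directly from the definitions of the Cayley graph and the lexicographic product, exactly as the surrounding text suggests.
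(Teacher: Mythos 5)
Your proposal is correct and takes essentially the same route as the paper: identify $G$ with $(G/H)\times H$ via a transversal (section) of $\pi$ and verify the two adjacency clauses of the lexicographic product directly from the definition of the Cayley graph. The paper just picks the second coordinate as $gg_i^{-1}$ with $g\in Hg_i$ (right-coset form), so that within a coset $g_2g_1^{-1}=(g_2g_i^{-1})(g_1g_i^{-1})^{-1}$ and the conjugation bookkeeping you resolve by patching the fiberwise conjugation automorphisms never arises — indeed your patched map coincides with the paper's bijection.
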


\begin{proof}
Let $\Gamma=\cay(G,(\overline{T})^{\pi^{-1}}\cup S)$, $|G:H|=m$, and $Hg_0=H,Hg_1,\ldots,Hg_{m-1}$ the pairwise distinct right cosets of $G$ by $H$. One can see that two vertices $x,y$ of $\Gamma$ are adjacent if and only if $Hx$ and $Hy$ are adjacent in $\Gamma_1$ or $Hx=Hy$ and $x$ and $y$ are adjacent in the induced subgraph $\Gamma_{Hx}$ which is isomorphic to $\Gamma_2$. Thus, the bijection $\varphi:G\rightarrow (G/H) \times H$ such that $g^{\varphi}=(Hg_i,gg_i^{-1})$, where $Hg_i$ is the right $H$-coset containing $g$, is an isomorphism from $\Gamma$ to $\Gamma_1[\Gamma_2]$.
\end{proof}

\begin{lemm}\label{disjointunion}
Let $G$ be a group and $H\leq G$. Suppose that $\Gamma=\cay(H,S)$ is a Cayley graph over $H$. Then $\Gamma^{\prime}=\cay(G,S)$ is isomorphic to a disjoint union of $m$ copies of $\Gamma$, where $m=|G:H|$.
\end{lemm}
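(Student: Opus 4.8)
The plan is to exploit that $S\subseteq H$ forces every arc of $\Gamma^{\prime}=\cay(G,S)$ to stay inside a single right coset of $H$, and then to identify the subdigraph induced on each coset with $\Gamma$ via a right translation.

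First I would note that, with the convention for the arc set adopted in Section~5, an arc $(g_1,g_2)$ of $\Gamma^{\prime}$ has the form $g_2=sg_1$ with $s\in S\subseteq H$, so $g_2g_1^{-1}\in H$ and hence $g_1,g_2$ lie in the same right coset $Hg$. Consequently there are no arcs between distinct right cosets of $H$, and $\Gamma^{\prime}$ is the disjoint union, over a transversal of $H$ in $G$, of the subdigraphs $\Gamma^{\prime}_{Hg}$ induced on the cosets $Hg$. The number of these cosets is $m=|G:H|$.

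Next I would fix a right coset $Hg_0$ and verify that the map $h\mapsto hg_0$ is an isomorphism of digraphs from $\Gamma=\cay(H,S)$ onto $\Gamma^{\prime}_{Hg_0}$: it is a bijection $H\to Hg_0$, and for $h_1,h_2\in H$ one has $(h_1,h_2)$ an arc of $\Gamma$ iff $h_2h_1^{-1}\in S$ iff $(h_2g_0)(h_1g_0)^{-1}\in S$ iff $(h_1g_0,h_2g_0)$ an arc of $\Gamma^{\prime}_{Hg_0}$. Combining this with the previous paragraph yields $\Gamma^{\prime}\cong m\Gamma$.

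I do not expect any genuine obstacle here; the only point demanding a little care is the bookkeeping of left versus right cosets imposed by the convention $\{(g,sg):s\in S,\ g\in G\}$ for the arc set — the relevant partition is by the right cosets $Hg$, and the identifying translations act on the right.
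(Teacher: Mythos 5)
Your proposal is correct and follows essentially the same route as the paper: partition $G$ into the right cosets of $H$, observe that $S\subseteq H$ forces all arcs to stay within a coset, and identify each induced subgraph with $\Gamma$ (the paper states this identification without writing out the right-translation map you make explicit). No issues.
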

\begin{proof}
Let $Hg_0=H,Hg_1,\ldots,Hg_{m-1}$ be pairwise distinct right cosets of $G$ by $H$. It is easy to see that there are no edges between $Hg_i$ and $Hg_j$ for all $i,j$ with $i\neq j$ and the induced subgraph $\Gamma^{\prime}_{Hg_i}$ is isomorphic to $\Gamma$ for every $i$.
\end{proof}

\section{Circulant $S$-rings of rank~$4$}
In this section we classify $S$-rings of rank~$4$ over cyclic groups. The main result of the section can be formulated as follows. 

\begin{theo}\label{rank4}
Let $\mathcal{A}$ be an $S$-ring of rank~$4$ over a cyclic group $G$. Then one of the following statements holds:
\\
\\
$(1)$ $\mathcal{A}=\mathcal{T}_L \otimes \mathcal{T}_U$, where $L$ and $U$ are nontrivial $\mathcal{A}$-subgroups such that $G=L\times U$;
\\
\\
$(2)$ $\mathcal{A}=(\mathcal{T}_L\wr \mathcal{T}_{U/L})\wr \mathcal{T}_{G/U}$ for some nontrivial $\mathcal{A}$-subgroups $L$ and $U$ such that $L<U$;
\\
\\
$(3)$ $\mathcal{A}=\cyc(\frac{p-1}{2},L)\wr \mathcal{T}_{G/L}$ for some nontrivial $\mathcal{A}$-subgroup $L$ of prime order $p\geq 3$;
\\
\\
$(4)$ $\mathcal{A}=\mathcal{T}_L\wr \cyc(\frac{p-1}{2},G/L)$ for some nontrivial $\mathcal{A}$-subgroup $L$ of prime index $p\geq 3$;
\\
\\
$(5)$ $|G|=p$ for some prime $p$ such that $p\equiv 1\mod~3$ and $\mathcal{A}=\cyc(\frac{p-1}{3},G)$;
\\
\\
$(6)$ $|G|=4$ and $\mathcal{A}=\mathbb{Z}G$.
\end{theo}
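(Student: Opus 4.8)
The plan is to proceed by a case analysis driven by the structure of $\mathcal{A}$-subgroups, since $\rk(\mathcal{A})=4$ means $\mathcal{S}(\mathcal{A})=\{\{e\},X_1,X_2,X_3\}$ with exactly three nontrivial basic sets. I would first dispose of the \emph{primitive} case: if $\mathcal{A}$ has no nontrivial proper $\mathcal{A}$-subgroup, then by the classification of primitive $S$-rings over cyclic groups (which forces $|G|=p$ prime and $\mathcal{A}$ cyclotomic, $\mathcal{A}=\cyc(K,C_p)$ with $K\leq\aut(C_p)$), the rank condition $|K\backslash C_p^\#|=3$ gives $|K|=\frac{p-1}{3}$, i.e. $p\equiv 1\bmod 3$; this is case~(5). (The degenerate primitive possibility $|G|$ prime with $\mathcal{A}=\mathbb{Z}G$ only has rank~$4$ when $p=4$ is not prime, so the genuine small case $|G|=4$, $\mathcal{A}=\mathbb{Z}G$, case~(6), must be caught separately — see below.)

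Next I would assume $\mathcal{A}$ is imprimitive and pick a nontrivial $\mathcal{A}$-subgroup. The key dichotomy is whether $\mathcal{A}$ decomposes as a nontrivial tensor product or as a nontrivial generalized wreath product — here I would invoke the structure theory of circulant $S$-rings (the Leung--Man / Evdokimov--Ponomarenko description, available through the cited Section~6 machinery): every circulant $S$-ring that is neither $\mathcal{T}_G$ nor $\mathbb{Z}G$ and is not cyclotomic of the above primitive type is a nontrivial tensor or generalized wreath product over proper $\mathcal{A}$-sections. In the tensor case $\mathcal{A}=\mathcal{A}_H\otimes\mathcal{A}_V$ with $G=H\times V$, multiplicativity of rank, $\rk(\mathcal{A})=\rk(\mathcal{A}_H)\rk(\mathcal{A}_V)=4$, forces $\{\rk(\mathcal{A}_H),\rk(\mathcal{A}_V)\}=\{2,2\}$, hence $\mathcal{A}_H=\mathcal{T}_H$ and $\mathcal{A}_V=\mathcal{T}_V$ — case~(1). (Note $\rk=4=1\cdot 4$ would need one factor of rank~$1$, impossible for an $S$-ring over a nontrivial group.)

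The bulk of the work is the generalized wreath case $\mathcal{A}=\mathcal{A}_U\wr_{U/L}\mathcal{A}_{G/L}$ with $\{e\}\leq L\leq U\leq G$, $L$ not both trivial, $U$ not both $G$. The two ``small'' quotients $\mathcal{A}_U$ and $\mathcal{A}_{G/L}$ and the section $\mathcal{A}_{U/L}$ are themselves circulant $S$-rings of smaller rank, and I would count basic sets: the basic sets of $\mathcal{A}$ split into those inside $U$ (contributing $\rk(\mathcal{A}_U)$ of them) and those outside $U$, each of which is $L$-invariant on both sides and hence pulls back from a basic set of $\mathcal{A}_{G/L}$ lying outside $U/L$. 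This gives $\rk(\mathcal{A})=\rk(\mathcal{A}_U)+\rk(\mathcal{A}_{G/L})-\rk(\mathcal{A}_{U/L})=4$. Now a short combinatorial analysis of the possible triples $(\rk\mathcal{A}_U,\rk\mathcal{A}_{U/L},\rk\mathcal{A}_{G/L})$, using $\rk(\mathcal{A}_{U/L})\leq\min(\rk\mathcal{A}_U,\rk\mathcal{A}_{G/L})$ and that each factor has rank $\geq 2$ unless the corresponding section is trivial, pins down a handful of shapes; iterating (since $\mathcal{A}_U$ and $\mathcal{A}_{G/L}$ of rank~$2$ or~$3$ are already classified — rank~$2$ is $\mathcal{T}$, rank~$3$ circulant $S$-rings are either $\mathcal{T}_L\wr\mathcal{T}_{G/L}$ or $\cyc(\frac{p-1}{2},C_p)$ by the known classification of strongly regular circulants / Lemma~\ref{wlprop}(4) combined with the rank-$3$ circulant description) yields precisely cases~(2),~(3),~(4), with the wreath being trivial ($U=L$) handled uniformly inside this. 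Case~(6) appears here as the boundary where all sections collapse, leaving only $|G|=4$ with $\mathcal{A}=\mathbb{Z}C_4=\mathcal{T}_{C_2}\wr\mathcal{T}_{C_2}$ of rank~$4$; one checks it is consistent with~(2) only if one allows $L=C_2=U/L$ trivially, but since the statement lists it separately I would simply verify $\mathbb{Z}C_4$ has rank~$4$ and note it is the unique such full group ring.

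\textbf{Main obstacle.} The hard part will be the generalized-wreath bookkeeping: ensuring the rank-counting formula $\rk(\mathcal{A})=\rk(\mathcal{A}_U)+\rk(\mathcal{A}_{G/L})-\rk(\mathcal{A}_{U/L})$ is applied correctly (in particular handling the degenerate sub-cases $L=\{e\}$, $U=G$, $U=L$ without double-counting), and then certifying that the rank-$2$ and rank-$3$ ``building blocks'' one plugs in are \emph{exactly} $\mathcal{T}$, $\mathcal{T}\wr\mathcal{T}$, and $\cyc(\frac{p-1}{2},C_p)$ and nothing else — this rests on the prior classification of circulant $S$-rings of rank $\leq 3$, which I would cite rather than reprove. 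A secondary subtlety is checking that the cyclotomic $S$-ring in case~(3) can only sit over an $\mathcal{A}$-subgroup of \emph{prime} order (so that $\cyc(\frac{p-1}{2},L)$ makes sense and has rank exactly~$3$), which follows from primitivity of the rank-$3$ block forcing $|L|$ prime.
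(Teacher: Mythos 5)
Your overall skeleton (tensor case via multiplicativity of rank, generalized wreath case reduced to the rank~$2$/$3$ building blocks via Lemma~\ref{rank3}) matches the paper's argument, but there is a genuine gap at the very first step: your dichotomy ``primitive $\Rightarrow$ case~(5); imprimitive $\Rightarrow$ nontrivial tensor or generalized wreath product'' is false. The correct structure statement (Lemma~\ref{circ}, Evdokimov--Ponomarenko) has a fourth alternative: $\mathcal{A}$ may be a \emph{normal cyclotomic $S$-ring with trivial radical}, and such an $S$-ring can be imprimitive over a composite cyclic group while being neither a tensor nor a generalized wreath product. The smallest witness is exactly your boundary case: $\mathbb{Z}C_4$ is imprimitive (the subgroup of order~$2$ is an $\mathcal{A}$-subgroup), is not a tensor product, and is not a nontrivial generalized wreath product (no nontrivial subgroup lies in the radical of a singleton basic set); your identification $\mathbb{Z}C_4=\mathcal{T}_{C_2}\wr\mathcal{T}_{C_2}$ is incorrect, since that wreath product has rank~$3$, not~$4$. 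The paper's Family~1 in Section~9 (rank~$5$ over $C_{pq}$) shows the same phenomenon in a non-degenerate way, so the missing case is not an artifact of $|G|=4$.

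Consequently your proof never rules out exotic rank-$4$ cyclotomic $S$-rings with trivial radical over composite groups, which is where the real work of the paper's Case of Lemma~\ref{circ}(4) lies: since basic sets of a cyclotomic $S$-ring consist of elements of equal order, rank~$4$ forces $|G|$ to have at most four divisors, so $|G|=pq$ or $|G|=p^k$ with $k\leq 3$; the $pq$ case collapses into Statement~(1), and the $p^k$ case is eliminated (except $k=1$, giving Statement~(5), and $|G|=4$, giving Statement~(6)) using the bound $|K|\leq p-1$ for $p$ odd and $|K|\leq 2$ for $p=2$ from~\cite[Lemma~5.1]{EP1}, via the inequalities $(p^k-1)/(p-1)+1\leq 4$. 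You need to add this branch (or an equivalent argument) to make the case analysis exhaustive; the rest of your outline, including the rank-counting bookkeeping $\rk(\mathcal{A})=\rk(\mathcal{A}_U)+\rk(\mathcal{A}_{G/L})-\rk(\mathcal{A}_{U/L})$ in the wreath case, is sound, though the paper gets by more simply by observing that $\rk(\mathcal{A}_U)\in\{2,3\}$ and that $\rk(\mathcal{A}_U)=2$ forces $U=L$ while $\rk(\mathcal{A}_U)=3$ forces $\mathcal{A}=\mathcal{A}_U\wr\mathcal{T}_{G/U}$.
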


Note that families of $S$-rings from Statements~1-6 of Theorem~\ref{rank4} are disjoint. The proof of Theorem~\ref{rank4} will be given later. We start with well-known Wielandt's theorem on primitive $S$-rings over cyclic groups.

\begin{lemm}\cite[Theorem~25.3]{Wi}\label{primitive}
If $\mathcal{A}$ is a primitive $S$-ring over a cyclic group of composite order then $\rk(\mathcal{A})=2$.
\end{lemm}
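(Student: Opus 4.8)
The plan is to prove the statement in contrapositive form: assuming $\mathcal{A}$ is a primitive $S$-ring over a cyclic group $G$ of composite order $n$, I will show that $G^\#$ is a single basic set, so that $\mathcal{A}=\mathcal{T}_G$ and $\rk(\mathcal{A})=2$. The whole strategy is to convert the hypothesis $\rk(\mathcal{A})>2$ into the existence of a nontrivial $\mathcal{A}$-subgroup, which primitivity forbids. Two structural facts act as $\mathcal{A}$-subgroup detectors. First, for every basic set $X\neq\{e\}$ both $\langle X\rangle$ and $\rad(X)$ are $\mathcal{A}$-subgroups, so primitivity forces $\langle X\rangle=G$ and, for every proper basic set, $\rad(X)=\{e\}$. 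Second, the Schur--Wielandt principle: since the coefficients of any $\eta=\sum_{g}a_g\,g\in\mathcal{A}$ are constant on basic sets, each level set $\{g\in G:\ a_g=c\}$ is an $\mathcal{A}$-set. Thus it suffices to produce a single element of $\mathcal{A}$ one of whose level sets is a nontrivial proper subgroup of $G$.

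First I would install the multiplier machinery. For $m$ coprime to $n$ the map $\sigma_m\colon g\mapsto g^m$ is an automorphism of $G$, and Schur's multiplier theorem asserts that $\sigma_m$ permutes $\mathcal{S}(\mathcal{A})$; equivalently $\underline{X^{(m)}}\in\mathcal{A}$, where $X^{(m)}=\{x^m:\ x\in X\}$. I would recall its proof, since the same idea drives everything: for a prime $q\nmid n$ one has the congruence $\underline{X}^{\,q}\equiv\underline{X^{(q)}}\pmod{q}$ in $\mathbb{Z}G$ (the ``freshman's dream'' in the commutative ring $\mathbb{Z}G$), and because $\underline{X}^{\,q}\in\mathcal{A}$ has coefficients constant on basic sets while $\underline{X^{(q)}}$ has $0/1$ coefficients, the Schur--Wielandt principle forces $X^{(q)}$ to be a union of basic sets; a cardinality comparison, $\sigma_q$ being bijective, shows it is a single basic set, and the case of general $m$ follows by multiplicativity.

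The heart of the argument is the power map for a prime $p$ that actually divides $n$. Assume $\rk(\mathcal{A})>2$; then $G^\#$ is a union of at least two basic sets, so if $a$ generates $G$, the basic set $X$ containing $a$ satisfies $\{e\}\neq X\subsetneq G^\#$, whence $\rad(X)=\{e\}$ by primitivity. Fix a prime $p\mid n$; since $n$ is composite the unique subgroup $H$ of index $p$ is nontrivial and proper, $H=\{g^p:\ g\in G\}$ is the image of $g\mapsto g^p$, and the fibres of this endomorphism are the cosets of the order-$p$ subgroup $P$. The congruence $\underline{X}^{\,p}\equiv\sum_{x\in X}x^p\pmod{p}$ has right-hand side supported on $H$, the coefficient of $y\in H$ being $|X\cap C_y|$ for the $P$-coset $C_y$ lying over $y$. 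Applying the Schur--Wielandt principle to the genuine element $\underline{X}^{\,p}\in\mathcal{A}$ shows that the residues $|X\cap C_y|\bmod p$ are constant along basic sets; combining this with the action of the multipliers fixing $H$ pointwise (the units $\equiv 1\pmod{n/p}$, which permute $G$ within cosets of $P$) pins down the intersections $X\cap C_y$ tightly enough to force $P\le\rad(X)$. As $P\neq\{e\}$ this contradicts $\rad(X)=\{e\}$, so $\rk(\mathcal{A})=2$.

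I expect the genuine obstacle to be precisely this last step: the passage from mod-$p$ data about the multiset $\{x^p:\ x\in X\}$ to an honest conclusion about $\rad(X)$. For $p\nmid n$ the power map is bijective and the congruence argument is clean, but for $p\mid n$ the fibres have size $p$, multiplicities appear, and one controls coefficients only modulo $p$; disentangling these multiplicities to obtain a genuine, not merely mod-$p$, $\mathcal{A}$-subgroup is exactly Schur's method and the technical core of Wielandt's theorem (it is the $S$-ring counterpart of Schur's theorem that a primitive permutation group of composite degree with a regular cyclic subgroup is $2$-transitive). The surrounding steps -- the reductions through $\langle X\rangle$ and $\rad(X)$ and the bookkeeping of multiplier orbits -- are routine once this core is in place.
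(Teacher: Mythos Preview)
The paper does not give its own proof of this lemma: it is stated with the citation \cite[Theorem~25.3]{Wi} and used as a black box. So there is no argument in the paper to compare your proposal against.

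As for the proposal itself, you have correctly identified both the overall architecture of Wielandt's proof (multiplier theorem, Schur--Wielandt level-set principle, analysis of the $p$-th power map for a prime $p\mid n$) and the precise point where the real work lies. But you have not actually carried out that step: your third paragraph asserts that the mod-$p$ information about $|X\cap C_y|$, ``combined with'' the action of suitable multipliers, ``pins down the intersections $X\cap C_y$ tightly enough to force $P\le\rad(X)$'', and then your fourth paragraph concedes that this is exactly the technical core you have not supplied. As written, the proposal is a plan with an explicitly acknowledged gap rather than a proof. The missing ingredient in Wielandt's argument is not simply a bookkeeping exercise: one shows that the \emph{trace} set $X^{[p]}=\{x^p:x\in X\}$ (as a set, not a multiset) is an $\mathcal{A}$-set, and from this deduces that the subgroup of $p$-th powers is an $\mathcal{A}$-subgroup, which is the contradiction. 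Getting from the congruence $\underline{X}^{\,p}\equiv \sum_x x^p\pmod p$ to the statement that the \emph{support} of the right-hand side is an $\mathcal{A}$-set requires a separate argument (this is Wielandt's Theorem~23.9), and your sketch does not indicate how to do it.
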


If $\mathcal{A}$ is an $S$-ring over a cyclic group $G$ then put $\rad(\mathcal{A})=\rad(X)$, where $X$ is a basic set of $\mathcal{A}$ containing a generator of $G$. One can check that $\rad(\mathcal{A})$ does not depend on the choice of $X$. $S$-rings over cyclic groups were described for the first time in~\cite{LeungMan1,LeungMan2}. We provide a description of $S$-rings over cyclic groups in a convenient for us form. 

\begin{lemm}\label{circ}
Let $\mathcal{A}$ be an $S$-ring over a cyclic group $G$. Then one of the following statements holds:
\\
\\
$(1)$ $\mathcal{A}=\mathcal{T}(G)$;
\\
\\
$(2)$ $\mathcal{A}$ is the nontrivial tensor product of two $S$-rings over proper subgroups of $G$;
\\
\\
$(3)$ $\mathcal{A}$ is the nontrivial $S$-wreath product for some $\mathcal{A}$-section $S$;
\\
\\
$(4)$ $\mathcal{A}$ is a normal cyclotomic $S$-ring with $|\rad(\mathcal{A})|=1$.
\end{lemm}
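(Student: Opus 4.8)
The plan is to split on whether $\mathcal{A}$ is primitive and, in the imprimitive case, to reduce to the classification of $S$-rings over cyclic groups due to Leung and Man~\cite{LeungMan1,LeungMan2}; the four cases of the lemma are then obtained by repackaging that classification and recording the normality and radical information needed in case~(4). If $\rk(\mathcal{A})=2$ then $\mathcal{A}=\mathcal{T}(G)$ and statement~(1) holds, so from now on assume $\rk(\mathcal{A})\geq 3$.

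Suppose first that $\mathcal{A}$ is primitive. By Lemma~\ref{primitive}, $|G|=p$ is prime. Every $S$-ring over a cyclic group of prime order is cyclotomic: this is classical --- it is contained in Wielandt's book~\cite{Wi} and also follows from Burnside's theorem on transitive permutation groups of prime degree, since $\aut(\mathcal{X}(\mathcal{A}))$ contains the regular subgroup $G_{right}$. Because $\rk(\mathcal{A})\geq 3$, the group $\aut(\mathcal{X}(\mathcal{A}))$ is not $2$-transitive, so by Burnside's theorem $\aut(\mathcal{X}(\mathcal{A}))\leq \AGL(1,p)=\Hol(G)$; hence $G_{right}\trianglelefteq \aut(\mathcal{A})$ and $\mathcal{A}$ is normal. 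Finally $\rad(\mathcal{A})=\rad(X)$ for a basic set $X$ containing a generator of $G$, and $\rad(X)$ is a proper subgroup of the group $G$ of prime order since $e\notin X$; thus $|\rad(\mathcal{A})|=1$ and statement~(4) holds.

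Now suppose $\mathcal{A}$ is imprimitive, i.e., $G$ has a nontrivial proper $\mathcal{A}$-subgroup. By~\cite{LeungMan1,LeungMan2}, $\mathcal{A}$ is a nontrivial tensor product (statement~(2)), or a nontrivial generalized wreath product (statement~(3)), or a cyclotomic $S$-ring. It remains to show that a cyclotomic $\mathcal{A}$ which is not of the form demanded in~(4) is in fact of type~(2) or~(3). This rests on two facts about cyclotomic $S$-rings over cyclic groups: (a) if $\rad(\mathcal{A})>\{e\}$, then $\mathcal{A}$ is a nontrivial $S$-wreath product for a section $S=U/\rad(\mathcal{A})$, so $\mathcal{A}$ falls into~(3); and (b) if $\mathcal{A}$ is not normal, then $G_{right}$ is not normal in $\aut(\mathcal{X}(\mathcal{A}))$, and analysing this overgroup of $G_{right}$ through its action on a suitable $\mathcal{A}$-section forces $\mathcal{A}$ to be a nontrivial tensor or generalized wreath product. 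Discarding these two situations leaves exactly a normal cyclotomic $S$-ring with $|\rad(\mathcal{A})|=1$, which is statement~(4).

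The primitive case is short given Lemma~\ref{primitive} and Burnside's theorem. I expect the main obstacle to be the imprimitive case: extracting the clean trichotomy ``nontrivial tensor product / nontrivial generalized wreath product / normal cyclotomic $S$-ring'' from the technically involved normal form of Leung and Man, and in particular establishing fact~(a), namely that a cyclotomic $S$-ring over a cyclic group with nontrivial radical is automatically a nontrivial generalized wreath product --- this is precisely what makes the radical hypothesis in case~(4) harmless.
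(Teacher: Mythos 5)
There is a genuine gap, and it sits exactly where the real content of the lemma lies. The paper does not prove this statement from first principles at all: it is quoted directly from Evdokimov--Ponomarenko~\cite[Theorems~4.1 and~4.2]{EP3}. Your primitive case is essentially fine (Lemma~\ref{primitive} plus the classical fact that every $S$-ring over a group of prime order is cyclotomic, with Burnside's theorem giving $\aut(\mathcal{A})\leq \AGL(1,p)$ and hence normality when $\rk(\mathcal{A})\geq 3$). But in the imprimitive case you reduce everything to your ``facts'' (a) and (b), and these are precisely the nontrivial theorems you would need to prove. Fact~(b) --- that a non-normal cyclotomic circulant $S$-ring is forced to be a nontrivial tensor or generalized wreath product by ``analysing this overgroup of $G_{right}$ through its action on a suitable $\mathcal{A}$-section'' --- is asserted with no argument; it is, in substance, the Evdokimov--Ponomarenko result~\cite{EP1,EP3} that the paper is citing, not something extractable from Burnside or from routine bookkeeping. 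Fact~(a) --- that $|\rad(\mathcal{A})|>1$ automatically yields a nontrivial $S$-wreath decomposition --- is also nonobvious: knowing that the radical $L$ of one basic set containing a generator is nontrivial does not by itself produce an $\mathcal{A}$-section $U/L$ with $L\leq\rad(X)$ for \emph{all} basic sets $X$ outside $U$; this again needs the structure theory you are trying to avoid.

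A secondary inaccuracy: you attribute to Leung--Man~\cite{LeungMan1,LeungMan2} the trichotomy ``nontrivial tensor product / nontrivial generalized wreath product / cyclotomic'' for imprimitive circulant $S$-rings. Their recursive normal form is in this spirit, but the clean statement with the base case refined to \emph{normal} cyclotomic with \emph{trivial radical} --- which is what case~(4) of the lemma asserts and what the later arguments of the paper (e.g.\ Lemma~\ref{autnorm} and Lemma~\ref{onepointnormal}) actually use --- is due to Evdokimov and Ponomarenko. So either cite \cite[Theorems~4.1, 4.2]{EP3} outright, as the paper does, or supply genuine proofs of your facts (a) and (b); as written, the proposal replaces the key citation by an unproven claim of the same strength.
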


\begin{proof}
The statement of the lemma follows from~\cite[Theorem~4.1, Theorem~4.2]{EP3}.
\end{proof}

\begin{lemm}\label{rank3}
Let $\mathcal{A}$ be an $S$-ring of rank~$3$ over a cyclic group $G$. Then one of the following statements holds:
\\
\\
$(1)$ $\mathcal{A}=\mathcal{T}_L\wr \mathcal{T}_{G/L}$ for some nontrivial $\mathcal{A}$-subgroup $L$;
\\
\\
$(2)$ $|G|=p$ for some prime $p\geq 3$ and $\mathcal{A}=\cyc(\frac{p-1}{2},G)$.
\end{lemm}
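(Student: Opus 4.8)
The plan is to apply the dichotomy of Lemma~\ref{circ} to an $S$-ring $\mathcal{A}$ of rank~$3$ over a cyclic group $G$ and eliminate or specialize each of the four cases. Case~(1) of Lemma~\ref{circ}, namely $\mathcal{A}=\mathcal{T}(G)$, is impossible since $\rk(\mathcal{T}(G))=2\neq 3$. For case~(2), if $\mathcal{A}=\mathcal{A}_1\otimes\mathcal{A}_2$ is a nontrivial tensor product then $\rk(\mathcal{A})=\rk(\mathcal{A}_1)\rk(\mathcal{A}_2)\geq 2\cdot 2=4$, again contradicting $\rk(\mathcal{A})=3$; so this case does not occur either. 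Thus only cases~(3) and~(4) of Lemma~\ref{circ} survive, and these two must produce exactly the two families in the statement.

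Next I would handle case~(3): $\mathcal{A}$ is a nontrivial $S$-wreath product $\mathcal{A}_U\wr_S\mathcal{A}_{G/L}$ for some $\mathcal{A}$-section $S=U/L$ with $\{e\}<L$ and $U<G$. The rank of a generalized wreath product is $\rk(\mathcal{A}_U)+\rk(\mathcal{A}_{G/L})-\rk(\mathcal{A}_S)$. Since each of these three ranks is at least~$2$ and $\rk(\mathcal{A}_S)\leq\min\{\rk(\mathcal{A}_U),\rk(\mathcal{A}_{G/L})\}$, forcing the total to equal $3$ pins everything down: one must have $\rk(\mathcal{A}_U)=\rk(\mathcal{A}_{G/L})=\rk(\mathcal{A}_S)=2$, which in turn forces $U=L$ (otherwise $S=U/L$ is a nontrivial section of the rank-2 ring $\mathcal{A}_U$, impossible as rank-2 $S$-rings have no nontrivial sections of rank~$2$ unless $U=L$), $\mathcal{A}_L=\mathcal{T}_L$, and $\mathcal{A}_{G/L}=\mathcal{T}_{G/L}$. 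Hence $\mathcal{A}=\mathcal{T}_L\wr\mathcal{T}_{G/L}$ for a nontrivial $\mathcal{A}$-subgroup $L$, which is statement~(1). I would state the rank formula for generalized wreath products explicitly (it follows from the definition of $\mathcal{A}_S$ as the image module) and verify the degenerate-section bookkeeping carefully, since that is where a subtle off-by-one could hide.

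Finally, case~(4): $\mathcal{A}$ is a normal cyclotomic $S$-ring with $|\rad(\mathcal{A})|=1$. Write $\mathcal{A}=\cyc(K,G)$ with $K\leq\aut(G)$. The condition $|\rad(\mathcal{A})|=1$ together with the fact that the basic set $X$ containing a generator of $G$ has $\langle X\rangle=G$ and $\rad(X)=\{e\}$ means, in particular, that $G$ has no proper nontrivial $\mathcal{A}$-subgroup meeting a generating basic set — I would argue that a normal cyclotomic $S$-ring over a cyclic group with trivial radical and composite order would have to be primitive (any $\mathcal{A}$-subgroup is $K$-invariant, and triviality of the radical plus normality forces no intermediate invariant subgroup), and then Wielandt's Lemma~\ref{primitive} gives $\rk(\mathcal{A})=2$, a contradiction. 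Hence $|G|=p$ is prime; since $\aut(C_p)$ is cyclic of order $p-1$, $K$ is the unique subgroup of its order, and $\rk(\cyc(K,C_p))=1+\frac{p-1}{|K|}$ counting the orbit $\{e\}$ together with $\frac{p-1}{|K|}$ orbits of size $|K|$ on $G^\#$. Setting this equal to $3$ gives $|K|=\frac{p-1}{2}$, so $\mathcal{A}=\cyc(\frac{p-1}{2},G)$ with $p\geq 3$, which is statement~(2). (The case $p=2$ gives rank~$2$, not~$3$, so indeed $p\geq 3$.) The main obstacle I anticipate is the argument in case~(4) that trivial radical forces primitivity for a \emph{normal cyclotomic} $S$-ring over a cyclic group — one has to be careful that every $\mathcal{A}$-subgroup is automatically characteristic/$K$-invariant and to rule out proper nontrivial $\mathcal{A}$-subgroups using only normality and $|\rad(\mathcal{A})|=1$; if that implication is not quite clean, one falls back directly on the structure theorem of~\cite{LeungMan1,LeungMan2} (equivalently \cite{EP3}) for cyclotomic $S$-rings with trivial radical, which already asserts the group is elementary abelian, hence here cyclic of prime order.
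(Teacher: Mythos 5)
Your overall route (running through the four cases of Lemma~\ref{circ} instead of the paper's direct primitive/imprimitive split) is legitimate, and cases (1) and (2) of Lemma~\ref{circ} are eliminated correctly; but your treatment of case (4) has a genuine gap. The implication you rely on --- that a normal cyclotomic $S$-ring with trivial radical over a cyclic group of composite order must be primitive --- is false: for any composite $n$ the group ring $\mathbb{Z}C_n$ is cyclotomic (take $K=1$), normal (its automorphism group is $G_{right}$), and has trivial radical since all basic sets are singletons, yet every subgroup of $C_n$ is an $\mathcal{A}$-subgroup, so it is maximally imprimitive. Your fallback is also unavailable: no structure theorem of Leung--Man or Evdokimov--Ponomarenko asserts that a (normal) cyclotomic $S$-ring with trivial radical lives only over an elementary abelian group --- the same $\mathbb{Z}C_n$ is a counterexample. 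What saves the case is the hypothesis you never use at this point, namely $\rk(\mathcal{A})=3$: if $H$ were a nontrivial proper $\mathcal{A}$-subgroup, then $H^{\#}$ would have to be one of the two nontrivial basic sets and $X=G\setminus H$ the other; $X$ is a union of $H$-cosets and contains every generator of $G$, so $\rad(\mathcal{A})=\rad(X)\geq H>\{e\}$, contradicting trivial radical. Hence $\mathcal{A}$ is primitive, Lemma~\ref{primitive} forces $|G|=p$ prime, and your orbit count $\rk(\cyc(K,C_p))=1+\frac{p-1}{|K|}=3$ gives $|K|=\frac{p-1}{2}$. This repaired argument is essentially the paper's proof, which splits on primitivity at the outset: in the imprimitive case rank~$3$ immediately forces $L=X_0\cup X_1$ to be a subgroup with $X_2=G\setminus L$ and $L\leq\rad(X_2)$, giving statement~(1), while the primitive case goes through Wielandt and Lemma~\ref{circ} to statement~(2); that dichotomy makes your case (3)/(4) analysis largely unnecessary.

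In case (3) your bookkeeping is also internally inconsistent, though not fatally so: if $\rk(\mathcal{A}_U)=\rk(\mathcal{A}_{G/L})=\rk(\mathcal{A}_S)=2$, your own rank formula gives $\rk(\mathcal{A})=2+2-2=2$, not $3$. The correct deduction from $\rk(\mathcal{A}_U)+\rk(\mathcal{A}_{G/L})-\rk(\mathcal{A}_S)=3$ with $\rk(\mathcal{A}_U),\rk(\mathcal{A}_{G/L})\geq 2$ is that $\rk(\mathcal{A}_S)=1$, i.e.\ $U=L$ (if $U>L$ then $\rk(\mathcal{A}_S)\geq 2$, and either $\rk(\mathcal{A}_U)=2$, which forces $L=U$ since $\mathcal{T}_U$ has no nontrivial proper $\mathcal{A}_U$-subgroups, or $\rk(\mathcal{A}_{G/L})=2$, which forces $U/L$ to be trivial --- a contradiction either way), and then $\rk(\mathcal{A}_L)=\rk(\mathcal{A}_{G/L})=2$, so $\mathcal{A}=\mathcal{T}_L\wr\mathcal{T}_{G/L}$ as in statement~(1). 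So the conclusion you reach is right, but the ``pins everything down'' step as written is the off-by-one you yourself anticipated and must be corrected, whereas case (4) needs the new rank-$3$ argument above rather than a cleanup.
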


\begin{proof}
Let $\mathcal{S}(\mathcal{A})=\{X_0,X_1,X_2\}$, where $X_0=\{e\}$. Suppose that $\mathcal{A}$ is imprimitive. Then without loss of generality we may assume that $L=X_0\cup X_1$ is a subgroup of $G$. This implies that $X_2=G\setminus L$ and hence $L\leq \rad(X_2)$. Thus, Statement~1 of Lemma~\ref{rank3} holds for $\mathcal{A}$. 

Now suppose that $\mathcal{A}$ is primitive. Then $G\cong C_p$ for some prime $p$ by Lemma~\ref{primitive}. So $\mathcal{A}$ is not a tensor or generalized wreath product of two $S$-rings. Therefore $\mathcal{A}=\cyc(K,G)$ for some $K\leq \aut(G)$ by Lemma~\ref{circ}. Since $\rk(\mathcal{A})=3$, we conclude that Statement~2 of Lemma~\ref{rank3} holds for $\mathcal{A}$.
\end{proof}

\begin{proof}[Proof of Theorem~\ref{rank4}]
One of the Statements~2-4 of Lemma~\ref{circ} holds for $\mathcal{A}$. Suppose that $\mathcal{A}=\mathcal{A}_L \otimes \mathcal{A}_U$, where $L$ and $U$ are nontrivial $\mathcal{A}$-subgroups such that $G=L\times U$. Since $\rk(\mathcal{A}_L)\rk(\mathcal{A}_U)=\rk(\mathcal{A})=4$, we obtain $\rk(\mathcal{A}_L)=\rk(\mathcal{A}_L)=2$ and hence $\mathcal{A}_L=\mathcal{T}_L$ and $\mathcal{A}_U=\mathcal{T}_U$. So Statement~1 of Theorem~\ref{rank4} holds for $\mathcal{A}$.

Suppose that $\mathcal{A}$ is the nontrivial $S$-wreath product for some $\mathcal{A}$-section $S=U/L$. Clearly, $\rk(\mathcal{A}_U)\in\{2,3\}$. If $\rk(\mathcal{A}_U)=2$ then $U=L$ and hence $\mathcal{A}=\mathcal{T}_L\wr \mathcal{A}_{G/L}$. The $S$-ring $\mathcal{A}_{G/L}$ has rank~$3$. So Statement~1 or Statement~2 of Lemma~\ref{rank3} holds for $\mathcal{A}_{G/L}$. In the former case Statement~2 of Theorem~\ref{rank4} holds for $\mathcal{A}$ ; in the latter case Statement~4 of Theorem~\ref{rank4} holds for $\mathcal{A}$. 

If $\rk(\mathcal{A}_U)=3$ then $G\setminus U\in \mathcal{S}(\mathcal{A})$ and hence $\mathcal{A}=\mathcal{A}_U\wr \mathcal{T}_{G/U}$. Statement~1 or Statement~2 of Lemma~\ref{rank3} holds for $\mathcal{A}_U$. In the former case Statement~2 of Theorem~\ref{rank4} holds for $\mathcal{A}$; in the latter case Statement~3 of Theorem~\ref{rank4} holds for $\mathcal{A}$.

Now suppose that $\mathcal{A}=\cyc(K,G)$ for some $K\leq \aut(G)$ and $|\rad(\mathcal{A})|=1$. Each basic set consists of elements of the same order because $\mathcal{A}$ is cyclotomic. Since $\rk(\mathcal{A})=4$, the total number of divisors of $|G|$ (including $1$ and $|G|$) is at most~$4$. Therefore
$$|G|=pq~\text{or}~|G|=p^k,$$
where $p$ and $q$ are distinct primes and $k\in\{1,2,3\}$. In the former case $\mathcal{A}=\mathcal{T}_P\otimes \mathcal{T}_Q$, where $P$ and $Q$ are subgroups of $G$ of orders $p$ and $q$ respectively, because $\rk(\mathcal{A})=4$. So Statement~1 of Theorem~\ref{rank4} holds for $\mathcal{A}$. In the latter case from~\cite[Lemma~5.1]{EP1} it follows that $|K|\leq p-1$ whenever $p$ is odd and $|K|\leq 2$ whenever $p=2$. This yields that 
$$4=\rk(\mathcal{A})\geq (p^k-1)/(p-1)+1~\eqno(6)$$
if $p$ is odd, and
$$4=\rk(\mathcal{A})\geq (2^k-1)/2+1~\eqno(7)$$
if $p=2$. If $p$ is odd then Eq.~(6) implies that $k=1$. In this case $|\aut(G):K|=3$ because $\rk(\mathcal{A})=4$ and Statement~5 of Theorem~\ref{rank4} holds for $\mathcal{A}$. If $p=2$ then due to Eq.~(7) and $\rk(\mathcal{A})=4$, we obtain $k=2$, i.e. $G\cong C_4$. Clearly, in this case $|K|=1$ and Statement~6 of Theorem~\ref{rank4} holds for $\mathcal{A}$.
\end{proof}

\begin{lemm}\label{autnorm}
Let $\mathcal{A}$ be a normal cyclotomic $S$-ring with trivial radical over a cyclic group $G$ and $K\leq \aut(G)$ such that $\mathcal{A}=\cyc(K,G)$. Then $\aut(\mathcal{A})=G_{right}\rtimes K$.
\end{lemm}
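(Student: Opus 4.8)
The plan is to prove the two inclusions of the claimed equality separately. For $G_{right}\rtimes K\leq\aut(\mathcal A)$: first, $G_{right}\leq\aut(\mathcal A)$ since $\mathcal X(\mathcal A)$ is a Cayley scheme over $G$ by construction. Next, $K\leq\aut(\mathcal A)$: the basic sets of $\mathcal A=\cyc(K,G)$ are precisely the $K$-orbits on $G$, so each $k\in K$, viewed as a permutation of the point set $G$ via its action as a group automorphism, fixes every basic set setwise and therefore preserves every basic relation $r(X)=\{(g,xg):g\in G,\ x\in X\}$. Finally, $G_{right}\cap K=1$ because $K$ fixes $e$ while $G_{right}$ is regular, and a short computation gives $k^{-1}\hat g k=\widehat{g^{k}}$ for $\hat g\in G_{right}$ and $k\in K$, so $K$ normalizes $G_{right}$ and $G_{right}K=G_{right}\rtimes K\leq\aut(\mathcal A)$.

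For the reverse inclusion I would use normality. Since $G_{right}$ is a regular normal subgroup of $\aut(\mathcal A)$, we have $\aut(\mathcal A)\leq N_{\sym(G)}(G_{right})=\Hol(G)=G_{right}\rtimes\aut(G)$, and hence $\aut(\mathcal A)=G_{right}\rtimes\aut(\mathcal A)_{e}$, where the point stabilizer $\aut(\mathcal A)_{e}=\aut(\mathcal A)\cap\aut(G)$ consists of genuine group automorphisms of $G$. It remains to show $\aut(\mathcal A)_{e}=K$; the inclusion $K\leq\aut(\mathcal A)_{e}$ is already known. Conversely, let $f\in\aut(\mathcal A)_{e}$. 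Because $f$ fixes $e$ and preserves every basic relation, and $(e,x)\in r(X)\iff x\in X$, the map $f$ stabilizes every basic set setwise. This is the point at which I would use that $G$ is cyclic: pick a generator $g$ of $G$, let $X=g^{K}$ be the basic set containing it; then $g^{f}\in g^{K}$, say $g^{f}=g^{k}$ with $k\in K$, so $fk^{-1}\in\aut(G)$ fixes the generator $g$ and is therefore the identity. Hence $f=k\in K$, so $\aut(\mathcal A)_{e}=K$ and $\aut(\mathcal A)=G_{right}\rtimes K$.

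The only non-formal ingredient is the passage from normality to $\aut(\mathcal A)_{e}\leq\aut(G)$, equivalently the identification $N_{\sym(G)}(G_{right})=\Hol(G)$; this is classical, but a self-contained version follows from noting that for $f\in\aut(\mathcal A)_{e}$ the relation $f^{-1}\hat g f=\widehat{g^{f}}$ forces $(yg)^{f}=y^{f}g^{f}$ for all $y,g\in G$, so $f$ is an automorphism of $G$. I would also remark that the hypothesis $|\rad(\mathcal A)|=1$ is not actually needed for the argument; it is present only because this lemma will be applied in the situation of case~(4) of Lemma~\ref{circ}.
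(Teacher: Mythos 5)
Your proof is correct, and its skeleton is the same as the paper's: establish $G_{right}\rtimes K\leq\aut(\mathcal{A})$, reduce to the point stabilizer $\aut(\mathcal{A})_e$, show $\aut(\mathcal{A})_e\leq\aut(G)$, and then pin $\aut(\mathcal{A})_e$ down to $K$ via the basic set containing a generator. The difference lies in how the two middle steps are handled. For $\aut(\mathcal{A})_e\leq\aut(G)$ the paper simply cites \cite[Theorem~4.7]{EP1}, whereas you derive it directly from the definition of normality via $\aut(\mathcal{A})\leq N_{\sym(G)}(G_{right})=\Hol(G)=G_{right}\rtimes\aut(G)$ (and your sketched computation $f^{-1}\hat g f=\widehat{g^f}\Rightarrow(yg)^f=y^fg^f$ is exactly the self-contained justification one needs); this makes the lemma independent of the external result. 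For the identification $\aut(\mathcal{A})_e=K$ the paper counts, $|\aut(\mathcal{A})_e|=|X|=|K|$ with $X$ the basic set of a generator, while you argue that $f\in\aut(\mathcal{A})_e$ sends a generator $g$ into $g^K$ and that an automorphism of a cyclic group fixing a generator is trivial --- the same underlying fact, phrased pointwise rather than by cardinality. Your closing remark is also accurate: the hypothesis $|\rad(\mathcal{A})|=1$ is never used in your argument (normality, cyclotomicity and cyclicity of $G$ suffice), and it appears in the statement essentially because the lemma is applied, and the cited theorem is stated, in the setting of Statement~4 of Lemma~\ref{circ}. In short, a correct and slightly more self-contained (indeed marginally more general) version of the paper's proof.
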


\begin{proof}
By the definition of the automorphism group, we have $\aut(\mathcal{A})\geq G_{right}\rtimes K$ and hence $\aut(\mathcal{A})_e\geq K$, where $\aut(\mathcal{A})_e$ is the stabilizer of $e$ in $\aut(\mathcal{A})$. From~\cite[Theorem~4.7]{EP1} it follows that $\aut(\mathcal{A})_e\leq \aut(G)$. Note that $|\aut(\mathcal{A})_e|=|X|=|K|$, where $X$ is a basic set of $\mathcal{A}$ containing a generator of $G$, because $G$ is cyclic, $K\leq \aut(G)$, and $\aut(\mathcal{A})_e\leq \aut(G)$. Thus, $\aut(\mathcal{A})_e=K$ and hence $\aut(\mathcal{A})=G_{right}\rtimes K$.
\end{proof}

In the proofs of the next three lemmas we use freely some notions concerned with coherent configurations. All of them can be found in~\cite{CP}.

\begin{lemm}\label{onepointnormal}
Let $\Gamma$ be a Cayley digraph over a cyclic group $G$ and $\mathcal{A}=\WL(\Gamma)$. Suppose that $\mathcal{A}$ is a normal cyclotomic $S$-ring with trivial radical. Then $\dimwl(\Gamma)\leq 3$.
\end{lemm}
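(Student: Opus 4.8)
The plan is to apply Lemma~\ref{wldim}(2): it suffices to exhibit a point $\alpha\in G$ for which the one-point extension $\mathcal{X}(\mathcal{A})_\alpha$ is separable. The natural candidate is $\alpha=e$, since the extension of a Cayley scheme at the identity is the most tractable. First I would invoke Lemma~\ref{autnorm} to get $\aut(\mathcal{A})=G_{right}\rtimes K$, where $K\leq\aut(G)$ satisfies $\mathcal{A}=\cyc(K,G)$; in particular $\aut(\mathcal{A})_e=K\leq\aut(G)$. The point stabilizer being a group of automorphisms of $G$ is the crucial structural feature that makes the $e$-extension manageable.

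Next I would argue that in $\mathcal{X}(\mathcal{A})_e$ every point is individualized, i.e. the extension is discrete (rank $n^2$), or at least that its color classes are the orbits of $\aut(\mathcal{A})_e=K$ on $G$ refined by the individualization of $e$. Since $\mathcal{A}$ has trivial radical, a basic set $X$ containing a generator of $G$ generates $G$ and has $\rad(X)=\{e\}$; the action of $K$ on this set $X$ is faithful (as $K\leq\aut(G)$ acts faithfully on any generating set), and by normality the scheme $\mathcal{X}(\mathcal{A})$ is determined by this $K$-action. After fixing $e$, the remaining points lie in $K$-orbits, and because $K$ acts faithfully and semiregularly enough on a generating basic set, the coherent configuration $\mathcal{X}(\mathcal{A})_e$ should have all fibers of size dividing $|K|$ and in fact become a union of ``well-understood'' pieces — concretely, I expect $\mathcal{X}(\mathcal{A})_e$ to be separable because each of its homogeneous components is either a regular scheme or a cyclotomic scheme over $C_p$, and such small schemes are separable. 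A cleaner route: show directly that the point stabilizer $\aut(\mathcal{X}(\mathcal{A})_e)_\beta$ is trivial for all $\beta$, so $\mathcal{X}(\mathcal{A})_e$ is a discrete-after-one-more-step configuration, hence by a standard argument (every algebraic isomorphism of a configuration all of whose one-point extensions are discrete is induced by a combinatorial one) it is separable.

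The cleanest concrete plan is therefore: (i) by Lemma~\ref{autnorm}, $\aut(\mathcal{A})_e=K$; (ii) since $K\leq\aut(G)$ and $\rad(\mathcal{A})=\{e\}$, $K$ acts faithfully on $G^\#$ and the orbit of a generator has size $|K|$ with trivial stabilizer, so in $\mathcal{X}(\mathcal{A})_e$ the fiber containing that generator is a union of singletons; (iii) individualizing $e$ together with any single generator already fixes everything, because an automorphism fixing $e$ lies in $K\leq\aut(G)$ and an element of $\aut(G)$ fixing a generator of the cyclic group $G$ is the identity; hence $\aut(\mathcal{X}(\mathcal{A})_{e})$ fixes a generator after passing to a further one-point extension, giving $\dimwl$-type bounds. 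Then cite the appropriate separability criterion from~\cite{CP} — a coherent configuration $\mathcal{X}$ is separable whenever, for some point, the one-point extension has trivial automorphism group and the original has a ``determined'' structure; the precise lemma to quote is the one stating that if $\mathcal{X}_\alpha$ is discrete then $\mathcal{X}$ is separable. Combining, $\mathcal{X}(\mathcal{A})_e$ is separable, so Lemma~\ref{wldim}(2) yields $\dimwl(\Gamma)\leq 3$.

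The main obstacle I anticipate is step (ii)–(iii): turning ``$\aut(\mathcal{A})_e=K$ acts faithfully on $G$'' into the genuinely combinatorial statement that $\mathcal{X}(\mathcal{A})_e$ is separable. Faithful group action bounds the automorphism group of the extension but separability is not purely a statement about $\aut$; one must either verify that $\mathcal{X}(\mathcal{A})_e$ decomposes (via wreath/tensor structure inherited from $\mathcal{A}$, using Lemma~\ref{separ}) into pieces each known to be separable, or appeal to a theorem of the form ``a scheme whose point stabilizers are all trivial after one individualization is separable.'' Identifying and correctly citing that theorem from~\cite{CP} (likely in the neighbourhood of the results used in Lemma~\ref{wldim}, i.e.~\cite{FKV} together with base-size arguments from~\cite{P}) is the delicate part; everything else is bookkeeping about cyclic groups and their automorphism groups.
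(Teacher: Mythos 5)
Your overall skeleton is the paper's: pick a point, show the one-point extension of $\mathcal{X}(\mathcal{A})$ is separable, and conclude via Statement~2 of Lemma~\ref{wldim}. But the crucial middle step has a genuine gap. From Lemma~\ref{autnorm} you get $\aut(\mathcal{X}(\mathcal{A})_e)=\aut(\mathcal{A})_e=K\leq\aut(G)$, and it is true that $K$ acts with trivial stabilizers on the generators of $G$; however, this is a statement about the \emph{automorphism group} of the extension, while the fibers and basis relations of the combinatorial one-point extension $\mathcal{X}(\mathcal{A})_e$ are a priori only \emph{coarser} than the orbit structure of $K$. Your claim that ``the fiber containing that generator is a union of singletons'' does not follow from trivial stabilizers, and the criterion you hope to quote --- that a configuration whose point stabilizers become trivial after one individualization is separable --- is not a theorem: there exist non-schurian and non-separable coherent configurations with trivial automorphism group, so neither discreteness of the extension nor its separability can be read off from $\aut$. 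You flagged this obstacle yourself, but the proposal does not overcome it, and the suggested fallback (decomposing $\mathcal{X}(\mathcal{A})_e$ into regular or prime-order cyclotomic pieces via Lemma~\ref{separ}) is not substantiated either; a normal cyclotomic $S$-ring with trivial radical need not admit any such tensor/wreath decomposition.

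What actually closes the gap in the paper is a nontrivial structural theorem about normal circulant schemes: by \cite[Theorem~4.4.7]{CP}, for a normal cyclotomic $S$-ring with trivial radical over a cyclic group the one-point extension $\mathcal{X}(\mathcal{A})_g$ (any $g$) is \emph{partly regular}, and partly regular coherent configurations are separable by \cite[Theorem~3.3.19]{CP}; then Lemma~\ref{wldim}(2) finishes. In other words, the combinatorial ``individualizing one generator determines everything'' statement you wanted is exactly the content of that cited theorem (rooted in the Evdokimov--Ponomarenko theory of normal cyclotomic schemes), and it must be invoked or reproved; it cannot be replaced by the group-theoretic observation that $K$ fixes no generator.
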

\begin{proof}
Let $\mathcal{X}=\mathcal{X}(\mathcal{A})$ and $g\in G$. From~\cite[Theorem~4.4.7]{CP} it follows that the one-point-extension $\mathcal{X}_g$ is partly regular. So $\mathcal{X}_g$ is separable by~\cite[Theorem~3.3.19]{CP}. Therefore $\dimwl(\Gamma)\leq 3$ by Statement~2 of Lemma~\ref{wldim}.
\end{proof}

A \emph{parabolic} $E$ of a coherent configuration $\mathcal{X}=(V,\mathcal{R})$ is an equivalence relation on $V$ such that $E\in\mathcal{R}^{\cup}$. The quotient $\mathcal{X}$ modulo $E$ is denoted by $\mathcal{X}_{V/E}$ (see~\cite[Section~3.1.2]{CP} for the definition). 

\begin{lemm}\label{ineq}
If $\mathcal{X}$ is a coherent configuration on a set $V$, $\alpha\in V$, $E$ a parabolic of $\mathcal{X}$, and $\Delta$ the class of $E$ containing $\alpha$ then $(\mathcal{X}_{\alpha})_{V/E}\geq (\mathcal{X}_{V/E})_{\Delta}$. 
\end{lemm}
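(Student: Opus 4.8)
The plan is to verify the inequality $(\mathcal{X}_{\alpha})_{V/E}\geq (\mathcal{X}_{V/E})_{\Delta}$ by exhibiting, for each basis relation of $(\mathcal{X}_{V/E})_{\Delta}$, a representation of it as a union of basis relations of $(\mathcal{X}_{\alpha})_{V/E}$. Both sides are coherent configurations on the quotient set $V/E$, so it suffices to show that every basis relation of the right-hand side is a union of basis relations of the left-hand side; equivalently, that $(\mathcal{X}_{V/E})_{\Delta}\leq (\mathcal{X}_{\alpha})_{V/E}$ in the partial order on coherent configurations on $V/E$. Since $(\mathcal{X}_{V/E})_{\Delta}$ is by definition the smallest coherent configuration on $V/E$ refining $\mathcal{X}_{V/E}$ in which the singleton $\{(\Delta,\Delta)\}$ is a basis relation, the claim reduces to checking that $(\mathcal{X}_{\alpha})_{V/E}$ is a coherent configuration on $V/E$ that (a) refines $\mathcal{X}_{V/E}$, and (b) has $\{(\Delta,\Delta)\}$ as one of its basis relations.

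First I would record that $E$ is a parabolic of $\mathcal{X}_\alpha$ as well: since $\mathcal{X}\leq \mathcal{X}_\alpha$ and $E\in\mathcal{R}^\cup$, the relation $E$ is still a union of basis relations of $\mathcal{X}_\alpha$, hence a parabolic of $\mathcal{X}_\alpha$, so the quotient $(\mathcal{X}_\alpha)_{V/E}$ is well-defined. Property (a) then follows by functoriality of the quotient construction with respect to the refinement order: from $\mathcal{X}\leq \mathcal{X}_\alpha$ one gets $\mathcal{X}_{V/E}\leq (\mathcal{X}_\alpha)_{V/E}$, because the natural projection $V\to V/E$ maps unions of basis relations to unions of basis relations compatibly on both configurations. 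For property (b), observe that $\{(\alpha,\alpha)\}$ is a basis relation of $\mathcal{X}_\alpha$ by the definition of the one-point extension; its image under the projection $V\times V\to (V/E)\times(V/E)$ is $\{(\Delta,\Delta)\}$. One must check this image is a basis relation of $(\mathcal{X}_\alpha)_{V/E}$ and not merely a union of several — but the image of a basis relation of a coherent configuration under the quotient projection is always a single basis relation of the quotient (this is how basis relations of $\mathcal{X}_{V/E}$ are defined, cf.~\cite[Section~3.1.2]{CP}), so $\{(\Delta,\Delta)\}$ is indeed a basis relation of $(\mathcal{X}_\alpha)_{V/E}$.

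Having established (a) and (b), the minimality in the definition of the one-point extension on $V/E$ gives $(\mathcal{X}_{V/E})_{\Delta}\leq (\mathcal{X}_\alpha)_{V/E}$, which is exactly the asserted inequality. The main obstacle I anticipate is purely bookkeeping: making precise the functoriality statement ``$\mathcal{X}\leq \mathcal{X}'$ implies $\mathcal{X}_{V/E}\leq \mathcal{X}'_{V/E}$'' and the statement ``the quotient image of a basis relation is a basis relation,'' both of which require unwinding the definition of the quotient coherent configuration in \cite[Section~3.1.2]{CP} and checking that the fibers of the projection interact correctly with the two partitions. There is no hard combinatorics or intersection-number computation here; everything is a manipulation of the relevant universal properties and the definition of $\mathcal{X}_{V/E}$.
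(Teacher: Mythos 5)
Your proposal is correct and follows essentially the same argument as the paper: verify that $(\mathcal{X}_{\alpha})_{V/E}$ refines $\mathcal{X}_{V/E}$ (since $\mathcal{X}_{\alpha}\geq\mathcal{X}$) and has $\{(\Delta,\Delta)\}$ as a basis relation (as the quotient image of $\{(\alpha,\alpha)\}$), then invoke the minimality in the definition of the one-point extension $(\mathcal{X}_{V/E})_{\Delta}$. The paper's proof is just a terser version of the same two-condition-plus-minimality argument.
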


\begin{proof}
The coherent configuration $(\mathcal{X}_{\alpha})_{V/E}$ satisfies the following conditions: (1) $(\mathcal{X}_{\alpha})_{V/E}\geq \mathcal{X}_{V/E}$ because $\mathcal{X}_{\alpha}\geq \mathcal{X}$; (2) $\{(\Delta,\Delta)\}$ is a basis relation of $(\mathcal{X}_{\alpha})_{V/E}$ because $\{(\alpha,\alpha)\}$ is a basis relation of $\mathcal{X}_{\alpha}$. By the definition, the coherent configuration $(\mathcal{X}_{V/E})_{\Delta}$ is the smallest coherent configuration satisfying two above conditions and hence the required statement holds.
\end{proof}

\begin{lemm}\label{paley}
Let $p$ be an odd prime, $G\cong C_{2p}$, $\Gamma$ a Cayley digraph over $G$, and $\mathcal{A}=\WL(\Gamma)$. Suppose that $\mathcal{A}=\mathcal{T}_L\wr \cyc(\frac{p-1}{2},G/L)$, where $L$ is the subgroup of $G$ of order~$2$. Then $\dimwl(\Gamma)\leq 3$.
\end{lemm}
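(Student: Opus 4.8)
The idea is to reduce to the situation covered by Lemma~\ref{onepointnormal} after performing a one-point extension. Write $\mathcal{X}=\mathcal{X}(\mathcal{A})$, let $\alpha\in G$, and consider the parabolic $E$ of $\mathcal{X}$ whose classes are the cosets of $L$; this is a parabolic precisely because $L$ is an $\mathcal{A}$-subgroup and $\mathcal{A}$ is the generalized wreath product $\mathcal{T}_L\wr\cyc(\frac{p-1}{2},G/L)$. The quotient $\mathcal{X}_{G/E}$ is the Cayley scheme of $\cyc(\frac{p-1}{2},G/L)$ over $G/L\cong C_p$, i.e. the scheme attached to the Paley-type cyclotomic $S$-ring over a group of prime order; this $S$-ring is normal cyclotomic with trivial radical.

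First I would show that $\mathcal{X}_\alpha$, the one-point extension at any point $\alpha$, already ``sees'' the full quotient structure on the other $L$-coset well enough to force separability. Concretely, by Lemma~\ref{ineq} applied with this parabolic $E$ and $\Delta=\alpha L$ the class containing $\alpha$, we get $(\mathcal{X}_\alpha)_{G/E}\geq (\mathcal{X}_{G/E})_\Delta$, i.e. the one-point extension of the Paley scheme over $C_p$ at the point $\Delta$. By Lemma~\ref{autnorm} (or directly \cite[Theorem~4.4.7]{CP}) this one-point extension of a normal cyclotomic scheme with trivial radical is partly regular, hence separable by \cite[Theorem~3.3.19]{CP}; moreover its fibers outside $\Delta$ are singletons, so every point of $G/L$ other than $\Delta$ is already individualized in $(\mathcal{X}_\alpha)_{G/E}$.

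Next I would lift this back to $G$: since all points of $G/L$ are individualized in $(\mathcal{X}_\alpha)_{G/E}$, and the wreath structure together with $\mathcal{A}_L=\mathcal{T}_L$ means that inside each $L$-coset the restricted configuration is the trivial one on two points, the one-point extension $\mathcal{X}_\alpha$ restricted to each $L$-coset is a coherent configuration on a $2$-element set; such a configuration on two points is either the discrete one or the one with one off-diagonal class, and in both cases it is separable (these are exactly $\mathbb{Z}C_2$ or $\mathcal{T}_{C_2}$). One then assembles $\mathcal{X}_\alpha$ as built from separable pieces over a quotient which, after the extension, is essentially discretized on all but one $L$-coset, and invokes the relevant ``separability is inherited by generalized wreath/quotient constructions'' statements (Lemma~\ref{separ} and \cite[Theorem~3.4.9, Corollary~3.2.24]{CP}) to conclude $\mathcal{X}_\alpha$ is separable. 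Then $\dimwl(\Gamma)\leq 3$ by Statement~2 of Lemma~\ref{wldim}.

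The main obstacle I anticipate is the last assembly step: $\mathcal{X}_\alpha$ is not literally a generalized wreath product of the quotient scheme with the fiber schemes, because individualizing a single point $\alpha$ breaks the homogeneity of the wreath decomposition. So I would need to argue carefully — probably via the explicit description of $(\mathcal{X}_\alpha)_{G/E}$ obtained above and a direct analysis of how basis relations of $\mathcal{X}_\alpha$ decompose with respect to the parabolic $E$ — that $\mathcal{X}_\alpha$ is the generalized wreath product of its quotient by $E$ with the fiber over $\alpha L$, with all other fibers discretized; once that structural statement is in hand, separability follows from the cited results since each constituent is separable. An alternative, possibly cleaner route is to bypass the general wreath machinery and instead verify directly that every algebraic isomorphism from $\mathcal{X}_\alpha$ is induced by a combinatorial isomorphism, using that the quotient is separable and that the fibers are trivially small; I would try the structural route first and fall back on the direct verification if the wreath decomposition of $\mathcal{X}_\alpha$ turns out to be awkward to state.
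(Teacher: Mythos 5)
There is a genuine gap at the step where you claim that the fibers of $(\mathcal{X}_{G/E})_{\Delta}$ outside $\Delta$ are singletons, so that every point of $G/L$ other than $\Delta$ is individualized in $(\mathcal{X}_\alpha)_{G/E}$. This is false for $p\geq 5$. By \cite[Theorem~4.4.9]{CP}, the basis relations of the one-point extension of $\mathcal{X}(\cyc(\frac{p-1}{2},G/L))$ at a point are exactly the componentwise orbits of the group $K\leq \aut(G/L)$ of order $\frac{p-1}{2}$; hence its fibers are the fixed point together with the two $K$-orbits on $(G/L)^{\#}$, each of size $\frac{p-1}{2}$. Equivalently, since the automorphism group of the quotient scheme is $(G/L)_{right}\rtimes K$ (Lemma~\ref{autnorm}) and its stabilizer of the chosen point is $K$, the one-point extension is coarser than the orbit configuration of $K$, so its fibers are unions of $K$-orbits and cannot be singletons: individualizing one vertex of a Paley-type scheme yields only a \emph{semiregular} configuration (all basis relations of valency~$1$), not a discrete one. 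With that premise gone, the planned assembly of $\mathcal{X}_\alpha$ (``all $L$-cosets except $\alpha L$ discretized, glue separable two-point pieces'') does not start; and, as you yourself anticipate, even granting it one would still need a wreath-type structure statement for $\mathcal{X}_\alpha$, whereas Lemma~\ref{separ} and \cite[Corollary~3.2.24, Theorem~3.4.9]{CP} apply to genuine (generalized) wreath and tensor products, which $\mathcal{X}_\alpha$ is not. Also, separability of $(\mathcal{X}_{G/E})_{\Delta}$ by itself (your appeal to the mechanism of Lemma~\ref{onepointnormal}) gives no control over $\mathcal{X}_\alpha$ on $G$.

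The correct continuation, which is what the paper does, uses precisely the semiregularity you have at hand: by Lemma~\ref{ineq}, $(\mathcal{X}_g)_{G/E}\geq (\mathcal{X}_{G/E})_{Lg}$, and since $K$ acts semiregularly on $(G/L)^{\#}$, the configuration $(\mathcal{X}_{G/E})_{Lg}$, and hence $(\mathcal{X}_g)_{G/E}$, is semiregular. This is the hypothesis needed for \cite[Theorem~3.1.28~(1)]{CP}, which reduces the separability of $\mathcal{X}_g$ to the separability of the extension $(\mathcal{X}_g)_E$ of $\mathcal{X}_g$ with respect to the parabolic $E$; the classes of $E$ are $L$-cosets of size~$2$, so all fibers of $(\mathcal{X}_g)_E$ have size at most~$2$ and it is separable by \cite[Exercise~3.7.20]{CP}. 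Hence $\mathcal{X}_g$ is separable and $\dimwl(\Gamma)\leq 3$ by Statement~2 of Lemma~\ref{wldim}. If you replace your individualization claim by this semiregularity argument and the reduction to the $E$-extension, your first step (the use of Lemma~\ref{ineq} with the parabolic coming from $L$) fits into a correct proof.
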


\begin{proof}
Let $\mathcal{X}=\mathcal{X}(\mathcal{A})$, $E=E(L)$ the parabolic of $\mathcal{X}$ corresponding to $L$, and $g\in G$. Note that $E$ is a parabolic of the one-point extension $\mathcal{X}_g$ because $\mathcal{X}_g\geq \mathcal{X}$. Lemma~\ref{ineq} implies that 
$$(\mathcal{X}_g)_{G/E}\geq (\mathcal{X}_{G/E})_{Lg}.~\eqno(8)$$

Since $|G/L|=p$ is prime and $\rk(\mathcal{A}_{G/L})=3$, Statement~$4$ of Lemma~\ref{circ} holds for $\mathcal{A}_{G/L}$, i.e. $\mathcal{A}_{G/L}$ is a normal cyclotomic $S$-ring with trivial radical. Let $K\leq \aut(G/L)$ such that $\mathcal{A}_{G/L}=\cyc(K,G/L)$. Clearly, $\mathcal{X}_{G/E}=\mathcal{X}(\mathcal{A}_{G/L})$. From~\cite[Theorem~4.4.9]{CP} applied to $G/L$ and $\mathcal{X}_{G/E}$ it follows that the basis relations of $(\mathcal{X}_{G/E})_{Lg}$ are orbits of $K$ acting on $G/L\times G/L$ componentwise. The group $K$ is semiregular because $K\leq \aut(G/L)$ and $|G/L|=p$ is prime. So $(\mathcal{X}_{G/E})_{Lg}$ is semiregular, i.e. all its basis relations have valency~$1$. Eq.~(8) yields that  $(\mathcal{X}_g)_{G/E}$ is also semiregular.

Now due to~\cite[Theorem~3.1.28~(1)]{CP}, to prove that $\mathcal{X}_g$ is separable, it is sufficient to prove that the extension $(\mathcal{X}_g)_E$ of $\mathcal{X}_g$ with respect to $E$ is separable. Each class of $E$ is an $L$-coset and hence it has size~$2$. This implies that each fiber of $(\mathcal{X}_g)_E$ has size at most~$2$. Therefore $(\mathcal{X}_g)_E$  is separable by~\cite[Exercise~3.7.20]{CP}. Thus, $\mathcal{X}_g$ is separable. In view of Statement~2 of Lemma~\ref{wldim}, we obtain $\dimwl(\Gamma)\leq 3$.
\end{proof}

\section{Cyclotomic numbers}
Let $m$ and $f$ be nonnegative integers, $p=mf+1$ an odd prime, and $l$ a fixed primitive root of $p$. The \emph{cyclotomic number} $(k,h)_m$ of order~$m$ is the number of solutions $x,y$ of the congruence
$$l^{mx+k}+1 \equiv l^{my+h} \mod p,$$
where the integers $x,y$ are chosen from $\{0,\ldots,f-1\}$~\cite{Di}.

The main goal of this section is to establish a relation between cyclotomic numbers and structure constants of a cyclotomic $S$-ring over a group of prime order. Throughout the section, we follow to~\cite{Di}, where the most part of the material is contained.

Let $G\cong C_p$ and $a$ a generator of $G$. The group $\aut(G)$ is a cyclic group of order $p-1$. The element $\sigma$ such that $a^{\sigma}=a^l$ is a generator of $\aut(G)$. Suppose that $p-1$ is divisible by $m$
and $\mathcal{A}=\cyc(\frac{p-1}{m},G)$. Then $\mathcal{S}(\mathcal{A})=\{X_0,\ldots, X_m\}$, where $X_0=\{e\}$ and
$$X_i=\{a^{l^{mj+(i-1)}}:j=0,\ldots,f-1\},~i\geq 1.$$
Put $c_{ij}^k=c_{X_iX_j}^{X_k}$. Due to the definitions of a cyclotomic number and $X_i$,  we obtain
$$c_{ij}^1=|aX_i^{-1}\cap X_j|=|\{a^s:~s\equiv l^{mx+(k-1)}+1 \equiv l^{my+(j-1)} \mod p\}|=(k-1,j-1)_m,~\eqno(9)$$
where $X_k=X_i^{-1}$, $i,j,k\geq 1$, and $x,y$ are chosen from $\{0,\ldots,f-1\}$.

\begin{lemm}\label{cyc1}
In the above notations, if $m=2$ and $f$ is even then the following equalities hold:
$$c_{11}^1=f/2-1,~c_{12}^1=c_{21}^1=c_{22}^1=f/2.$$
\end{lemm}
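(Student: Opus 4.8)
The plan is to compute the cyclotomic numbers of order $2$ directly from their definition and then read off the structure constants via Eq.~(9). Recall that with $m=2$ we have $p = 2f+1$, so $p \equiv 1 \pmod 4$ precisely when $f$ is even, which is the hypothesis. The two nontrivial basic sets $X_1$ and $X_2$ are the set of nonzero squares and the set of non-squares in $\mathbb{F}_p^\times$ (identified with $G^\#$ via the chosen primitive root $l$), each of size $f$. Since $-1$ is a square when $f$ is even, each $X_i$ is symmetric, so $X_i^{-1} = X_i$ and in Eq.~(9) we always have $X_k = X_i$, i.e. $c_{ij}^1 = (i-1, j-1)_2$. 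Thus it suffices to determine the four cyclotomic numbers $(0,0)_2$, $(0,1)_2$, $(1,0)_2$, $(1,1)_2$.

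First I would recall the standard linear relations among cyclotomic numbers of order $2$. Summing the defining congruence over the possible right-hand classes gives $(0,0)_2 + (0,1)_2 = f - 1$ (one subtracts $1$ because the case $l^{2x} + 1 \equiv 0$, i.e. the solution corresponding to the element $-1$ lying in class $X_1$, contributes to the count of representations but must be excluded — more precisely, among the $f$ values $l^{2x}+1$ as $x$ ranges over $\{0,\dots,f-1\}$, exactly one equals $0$ since $-1$ is a square) and similarly $(1,0)_2 + (1,1)_2 = f$. Next, the symmetry relations for $m = 2$ with $f$ even read $(0,1)_2 = (1,0)_2$ and $(0,0)_2 = (1,1)_2$ — these follow from Eq.~(2) (the standard identity $|Z| c^{Z^{-1}}_{XY} = |X| c^{X^{-1}}_{YZ}$ applied to the all-symmetric basic sets of equal size $f$, which forces $c_{11}^2 = c_{12}^1 = c_{21}^1$ and $c_{11}^1 = c_{22}^1$), together with the elementary fact that $(k,h)_2 = (h,k)_2$ when $f$ is even. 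Combining $(0,0)_2 = (1,1)_2$ with $(1,0)_2 + (1,1)_2 = f$ and $(0,1)_2 = (1,0)_2$ with $(0,0)_2 + (0,1)_2 = f-1$ gives a linear system whose unique solution is $(0,0)_2 = f/2 - 1$ and $(0,1)_2 = (1,0)_2 = (1,1)_2 = f/2$. Translating back through $c_{ij}^1 = (i-1,j-1)_2$ yields exactly the claimed equalities $c_{11}^1 = f/2 - 1$ and $c_{12}^1 = c_{21}^1 = c_{22}^1 = f/2$.

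Alternatively, and perhaps more self-containedly, one can argue combinatorially inside the $S$-ring: since $X_1 \cup X_2 = G^\#$ and each has size $f$, Lemma~\ref{deza}-type bookkeeping or a direct count of the multiset $X_1 X_1$ gives $\underline{X_1}^2 = f e + c_{11}^1 \underline{X_1} + c_{11}^2 \underline{X_2}$ with $c_{11}^1 + c_{11}^2 = f$ (counting sizes: $f^2 = f + f c_{11}^1 + f c_{11}^2$), and the fact that $-1 \in X_1$ contributes the identity term with multiplicity $f$ while forcing one extra relation; pairing this with $\underline{X_1}\,\underline{X_2}$ and using $\underline{X_1} + \underline{X_2} = \underline{G} - e$ together with $\underline{G}\,\underline{X_i} = f \underline{G}$ produces the same linear system. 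Either route is short.

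The only real obstacle is being careful about the \emph{offset} issue: the defining congruence for $(k,h)_2$ is $l^{2x+k} + 1 \equiv l^{2y+h}$, and one must check that the case where the left side vanishes (giving no valid $y$) is counted correctly and occurs exactly once, and that this single exceptional $x$ lies in the class with $k$ even — this is where the hypothesis "$f$ even" (equivalently $p \equiv 1 \pmod 4$, equivalently $-1$ is a square) enters and is genuinely used, both here and in the symmetry relation $(k,h)_2 = (h,k)_2$. Once that is pinned down, everything else is a two-by-two linear solve.
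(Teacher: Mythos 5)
Your overall plan (compute the order-$2$ cyclotomic numbers and translate them via Eq.~(9)) is reasonable and close in spirit to the paper, which simply quotes Dickson's formulas; but your derivation hinges on a relation that is false and in fact contradicts the lemma you are proving. You assert the symmetry $(0,0)_2=(1,1)_2$, i.e. $c_{11}^1=c_{22}^1$, and attribute it to Eq.~(2). Applied to inverse-closed basic sets of equal size $f$, Eq.~(2) only gives the cyclic identities $c_{11}^2=c_{12}^1=c_{21}^1$ and $c_{22}^1=c_{12}^2=c_{21}^2$; it cannot relate $c_{11}^1$ to $c_{22}^1$, and indeed the lemma states $c_{11}^1=f/2-1\neq f/2=c_{22}^1$. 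Worse, the four relations you combine are mutually inconsistent: from $(0,0)_2=(1,1)_2$, $(0,1)_2=(1,0)_2$, $(0,0)_2+(0,1)_2=f-1$ and $(1,0)_2+(1,1)_2=f$ one deduces $f-1=f$, and the ``unique solution'' you write down does not satisfy your first relation. The identity actually needed is $(1,0)_2=(1,1)_2$ (equivalently $c_{21}^1=c_{22}^1$), which follows from the standard substitution: dividing the congruence $l^{2x+1}+1\equiv l^{2y}\pmod p$ by $l^{2x+1}$ turns it into one of type $(1,1)_2$ and vice versa, i.e. $(k,h)_m=(m-k,h-k)_m$. With that identity, together with $(0,1)_2=(1,0)_2$ for $f$ even and the two row sums (where, as you correctly note, $-1$ being a square places the single defective $x$ in the $k=0$ row), the values $f/2-1,\,f/2,\,f/2,\,f/2$ do follow.

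Your ``more self-contained'' alternative also does not close as sketched. The size count you write, $f^2=f+fc_{11}^1+fc_{11}^2$, yields $c_{11}^1+c_{11}^2=f-1$, not $f$; and, more importantly, the relations obtainable from the $S$-ring axioms alone, namely $c_{11}^1+c_{11}^2=f-1$, $c_{12}^1+c_{12}^2=f$, $c_{22}^1+c_{22}^2=f-1$ together with the Eq.~(2) identities, leave a one-parameter family of possibilities (the unknowns $c_{11}^1$, $c_{11}^2=c_{12}^1$, $c_{22}^1=c_{12}^2$, $c_{22}^2$ satisfy only three independent linear equations). Some genuinely arithmetic input about squares modulo $p$ is therefore unavoidable, be it the substitution argument above, a quadratic-character computation, or simply Dickson's Eqs.~(18)--(19), which is what the paper cites.
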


\begin{proof}
The statement of the lemma follows from~\cite[Eqs.~(18)-(19)]{Di} and Eq.~(9).
\end{proof}

Note that if $m=2$ and $f$ is even then $\mathcal{A}$ is the WL-closure of the Paley graph with $p$ vertices. The structure constants from Lemma~\ref{cyc1} can be computed from the parameters of the Paley graph using well-known formulas~\cite[Proposition~2.1.12]{CP}.

\begin{lemm}\label{cyc2}
In the above notations, let $m=3$. Put $x=9c_{23}^1-p-1$ and $y=c_{12}^1-c_{13}^1$. The following equalities hold:
$$4p=x^2+27y^2,~x\equiv 1 \mod~3;$$
$$9c_{11}^1=p-8+x;$$
$$18c_{12}^1=2p-4-x+9y,~18c_{13}^1=2p-4-x-9y.$$
\end{lemm}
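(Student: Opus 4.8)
The plan is to obtain Lemma~\ref{cyc2} as a direct translation, via Eq.~(9), of the classical evaluation of the cyclotomic numbers of order~$3$ going back to Gauss (see~\cite{Di}). So the mathematical content is entirely contained in the literature; the work is to match the structure constants with the right cyclotomic numbers and to substitute.

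First I would record the following observation. Since $p$ is an odd prime with $p\equiv 1\pmod 3$, the integer $f=(p-1)/3$ is even, so $(p-1)/2=3f/2$ is divisible by~$3$. As $-1=l^{(p-1)/2}$ and adding a multiple of~$3$ to the exponent of~$l$ keeps an element inside the same basic set, this yields $X_i^{-1}=X_i$ for $i=1,2,3$. Consequently, in Eq.~(9) one always has $k=i$, and therefore
$$c_{11}^1=(0,0)_3,\quad c_{12}^1=(0,1)_3,\quad c_{13}^1=(0,2)_3,\quad c_{23}^1=(1,2)_3.$$

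Next I would invoke the classical formulas for the cyclotomic numbers of order~$3$ (see~\cite{Di}): if $4p=L^2+27M^2$ with $L\equiv 1\pmod 3$, a representation which exists and is unique up to the sign of~$M$, then for a suitable choice of that sign one has $(0,0)_3=(p-8+L)/9$, $(1,2)_3=(p+1+L)/9$, $(0,1)_3=(2p-4-L+9M)/18$, and $(0,2)_3=(2p-4-L-9M)/18$. Substituting, from $c_{23}^1=(1,2)_3$ we get $x=9c_{23}^1-p-1=L$, hence $x\equiv 1\pmod 3$; from $c_{12}^1-c_{13}^1=(0,1)_3-(0,2)_3=M$ we get $y=M$; hence $4p=L^2+27M^2=x^2+27y^2$. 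The remaining three equalities are just the displayed formulas for $(0,0)_3$, $(0,1)_3$, $(0,2)_3$ rewritten in terms of $x$ and $y$.

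The only genuine input here is the classical order-$3$ evaluation, so the ``hard part'' is really a matter of correct bookkeeping, with two points deserving care: (i) verifying that every basic set is symmetric, so that the index $k$ in Eq.~(9) equals $i$ in all four cases we need, which rests precisely on $f$ being even; and (ii) the sign of $M$ in the classical formulas is not canonical, but it is pinned down exactly by the relation $y=(0,1)_3-(0,2)_3$, so the statement of the lemma, which defines $y$ in this way, is unambiguous and no further sign discussion is required.
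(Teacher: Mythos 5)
Your proposal is correct and follows essentially the same route as the paper, whose proof simply cites Dickson's order-$3$ cyclotomic number formulas (Eqs.~(33)--(35) of~\cite{Di}) together with Eq.~(9); you have merely spelled out the bookkeeping (symmetry of the basic sets via $f$ even, the identification $c_{ij}^1=(i-1,j-1)_3$, and the sign of $M$ being fixed by the definition $y=c_{12}^1-c_{13}^1$), all of which is accurate.
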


\begin{proof}
The statement of the lemma  follows from~\cite[Eqs.~(33)-(35)]{Di} and Eq.~(9).
\end{proof}

\begin{rem1}
Note that the integers $x^2$ and $y^2$ can be uniquely determined by the equality $4p=x^2+27y^2$ (see~\cite[p.~7]{Di}).
\end{rem1}

The cyclotomic schemes with three nontrivial classes were studied in~\cite{Holl,Mathon}. In particular, intersection numbers of such schemes were expressed via some parameters in these papers. The next lemma provides a necessary and sufficient condition for the equality of some intersection numbers of a cyclotomic scheme with three nontrivial classes. This lemma will be used in the proof of Theorem~\ref{main1}. It is possible that the statement of this lemma is known but the authors did not find it and provide the direct proof to make the text self-contained.

\begin{lemm}\label{prime}
In the above notations, let $m=3$, $i\in\{1,2\}$, and $T_i=\{c_{11}^1+2(i-1),c_{11}^2,c_{11}^3\}$. Then $|T_i|$ is equal to~$2$ or~$3$. Moreover, $|T_i|=2$ if and only if $p=t^2+3i^2$ for some integer $t$.
\end{lemm}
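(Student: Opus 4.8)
The plan is to express the three numbers $c_{11}^1$, $c_{11}^2$, $c_{11}^3$ explicitly in terms of the integers $x$ and $y$ from Lemma~\ref{cyc2}, reduce the question ``are two of them equal'' to a polynomial condition in $x,y$, and then translate that condition, via the identity $4p=x^2+27y^2$, into the statement that $p$ is represented by the quadratic form $t^2+3i^2$.

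First I would compute all three structure constants $c_{11}^k$. Lemma~\ref{cyc2} already gives $9c_{11}^1=p-8+x$. To get $c_{11}^2$ and $c_{11}^3$ I would use the symmetry relations~(2) applied to the cyclotomic $S$-ring: since $m=3$ and $f$ is even (so $-1$ is a cube, hence $X_i^{-1}=X_i$ and all basic sets are symmetric, and $|X_1|=|X_2|=|X_3|=f$), equation~(2) reduces $c_{11}^2$ and $c_{11}^3$ to $c_{12}^1$ and $c_{13}^1$ up to the common factor $f$. Concretely $f c_{11}^2 = |X_1| c_{11}^2 = |X_2| c_{12}^1 = f c_{12}^1$, so $c_{11}^2=c_{12}^1$ and similarly $c_{11}^3=c_{13}^1$; thus by Lemma~\ref{cyc2},
$$18c_{11}^2=2p-4-x+9y,\qquad 18c_{11}^3=2p-4-x-9y.$$
(If instead $-1$ is not a cube the three nontrivial basic sets split into an inverse-pair and a self-paired set and one permutes indices accordingly; I would remark that $f$ even forces $-1\in X_1^{-1}X_1$ only in the symmetric case, but in any event the set $\{c_{11}^1,c_{11}^2,c_{11}^3\}$ is a cyclic-shift–invariant multiset and the computation is unchanged up to relabelling.)

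Next I would analyse when two of the numbers in $T_i=\{c_{11}^1+2(i-1),\,c_{11}^2,\,c_{11}^3\}$ coincide. Clearing denominators with the expressions above, $c_{11}^2=c_{11}^3$ is equivalent to $y=0$, and by Remark~1 and $4p=x^2+27y^2$ this means $4p=x^2$, impossible for $p$ prime; so these two are always distinct, whence $|T_i|\ge 2$. It remains to examine $c_{11}^1+2(i-1)=c_{11}^2$ and $c_{11}^1+2(i-1)=c_{11}^3$. Putting everything over a common denominator ($18$), the first reads $2(p-8+x)+36(i-1)=2p-4-x+9y$, i.e. $3x+36(i-1)-12=9y$, i.e. $x+12(i-1)-4=3y$, i.e. $x=3y-12i+16$; the second gives $x=-3y-12i+16$. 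In either case $x=\pm 3y-12(i-1)-4$, and substituting into $4p=x^2+27y^2$ yields, after expansion, a relation of the shape $4p = 36y^2 \mp 24(3i-4)y + (\text{const})$ which I then complete the square on. The arithmetic should collapse (using $x\equiv 1\bmod 3$ to pin down the sign/residue) to $4p=(6y\mp 2(3i-4))^2/\text{something}+\dots$; the clean target identity is $p=t^2+3i^2$ with $t$ an explicit integer linear combination of $x$ and $y$. Conversely, if $p=t^2+3i^2$ then $4p=(2t)^2+27\cdot\big(\tfrac{2i}{3}\big)^2\cdot 3$—more carefully, $4p=(2t)^2+3(2i)^2$, and matching this with the essentially unique representation $4p=x^2+27y^2$ forces $|y|$ to take the value making one of the two equalities above hold; I would invoke the uniqueness in Remark~1 to conclude.

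The main obstacle I anticipate is purely bookkeeping: matching the two inequivalent-looking quadratic representations $4p=x^2+27y^2$ and $4p=(2t)^2+3(2i)^2$ and showing the divisibility/congruence conditions ($x\equiv 1\bmod 3$, parity of $t$) make the correspondence exact rather than merely ``up to finitely many cases.'' I expect one needs the standard fact that for a prime $p\equiv 1\bmod 3$ the representation $4p=x^2+27y^2$ with $x\equiv 1\bmod 3$ is unique up to the sign of $y$ (cited from~\cite[p.~7]{Di} via Remark~1), and that $p=a^2+3b^2$ is solvable for every such $p$; combining these, the equality condition on $T_i$ becomes a genuine ``iff'' with $i^2=b^2$, i.e. the specific value $3i^2$ must occur, which is exactly the constraint $p=t^2+3i^2$. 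Once the two equalities $c_{11}^1+2(i-1)=c_{11}^{2}$ and $=c_{11}^{3}$ are shown to be mutually exclusive (they force $y$ of opposite signs with $y\ne 0$), we get that $|T_i|=3$ unless exactly one holds, giving $|T_i|\le 3$ and the stated characterisation of $|T_i|=2$.
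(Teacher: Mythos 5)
Your treatment of the inequality $|T_i|\ge 2$ and of the forward implication is essentially the paper's argument: Eq.~(2) (with all basic sets symmetric of size $f=(p-1)/3$, which is automatically even since $p$ is odd, so your parenthetical alternative case never arises) gives $c_{11}^2=c_{12}^1$ and $c_{11}^3=c_{13}^1$; the equality $c_{11}^2=c_{11}^3$ would force $y=0$ and $4p=x^2$, impossible; and the two possible coincidences $c_{11}^1+2(i-1)=c_{11}^2$ or $c_{11}^3$ give $x=\pm 3y+4$ (for $i=1$), resp. $x=\pm 3y-8$ (for $i=2$), after which $4p=x^2+27y^2$ does collapse to $p=(3y\pm i)^2+3i^2$ as you predict. (Your summary line ``$x=\pm 3y-12(i-1)-4$'' has a sign slip; your earlier $x=\pm 3y-12i+16$ is the correct version.)

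The genuine gap is in the converse. You propose to ``match'' $4p=(2t)^2+3(2i)^2$ against the representation $4p=x^2+27y^2$ and invoke Remark~1, but Remark~1 asserts uniqueness only for the form $u^2+27v^2$, while $4p$ typically has several essentially different representations by $u^2+3v^2$, and $(2t,2i)$ need not be the one with $3\mid v$. For instance $p=7=2^2+3$, yet $4p=28=1^2+27\cdot 1^2$, so $(x,3y)=(\pm 1,\pm 3)$ while $(2t,2i)=(4,2)$; literally matching would require $|y|=2i/3$, which is not even an integer. The missing bridge, which is exactly what the paper supplies, is arithmetic: since $p\equiv 1\pmod 3$, one has $3\mid t^2-i^2$, so $3$ divides exactly one of $t\pm i$; then $x_1=t\pm 3i$ and $y_1=(t\mp i)/3$ satisfy $4p=x_1^2+27y_1^2$, Remark~1 yields $x=\pm x_1$, $y=\pm y_1$, and the normalization $x\equiv 1\pmod 3$ together with $t\equiv \pm i\pmod 3$ pins down the sign of $x$ so that Eq.~(11), resp.~(12), actually holds, giving $c_{11}^1+2(i-1)\in\{c_{11}^2,c_{11}^3\}$ and $|T_i|=2$. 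Without this construction and the sign check the converse is not established; with it, your argument becomes the paper's. (Also note the mutual-exclusivity discussion at the end is unnecessary: $|T_i|\le 3$ is automatic, and $|T_i|=2$ only requires that at least one coincidence occur, given $c_{11}^2\ne c_{11}^3$.)
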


\begin{proof}
Since $\frac{p-1}{3}$ is even, all basic sets of $\mathcal{A}$ are inverse-closed  sets. They are of the same size $\frac{p-1}{3}$. So Eq.~(2) implies that 
$$c_{11}^2=c_{12}^1~\text{and}~c_{11}^3=c_{13}^1.~\eqno(10)$$

The numbers $x$ and $y$ are defined as in Lemma~\ref{cyc2}. Assume that $c_{11}^2=c_{11}^3$. Then from the definition of $y$ and Eq.~(10) it follows that $y=0$. So $4p=x^2$ by the first equality from Lemma~\ref{cyc2}, a contradiction with the fact that $p$ is prime. Therefore $c_{11}^2\neq c_{11}^3$ and hence $|T_i|\geq 2$.

Suppose that $|T_i|=2$. Due to the above paragraph,  $c_{11}^1+2(i-1)=c_{11}^2$ or $c_{11}^1+2(i-1)=c_{11}^3$. The second and the third equalities from Lemma~\ref{cyc2} and Eq.~(8) imply that
$$x=4\pm 3y~\eqno(11)$$
if $i=1$, and
$$x=-8\pm 3y~\eqno(12)$$
if $i=2$. Together with the first equality from Lemma~\ref{cyc2}, this implies that
$$p=(3y\pm 1)^2+3$$
if $i=1$, and
$$p=(3y\pm 2)^2+12$$
if $i=2$.
Thus, $p=t^2+3$  for $t=3y\pm 1$ if $i=1$ and $p=t^2+12$  for $t=3y\pm 2$ if $i=2$.

Conversely, suppose that $p=t^2+3i^2$ for some integer $t$. Since $p-1$ is divisible by~$3$, we conclude that $t^2-i^2$ is divisible by~$3$. So exactly one of the numbers $t-i,t+i$ is divisible by~$3$. If $3$ divides $(t-i)$ then the integers $x_1=t+3i$ and $y_1=(t-i)/3$ satisfy the equality $4p=x_1^2+27y_1^2$. In view of Remark~1, we obtain $x=\pm x_1$ and $y=\pm y_1$. Note that $x=x_1=t+3$ if $i=1$ and $x=-x_1=-t-6$ if $i=2$ because $x\equiv 1 \mod~3$ by the first equality form Lemma~\ref{cyc2} and $t\equiv i\mod~3$. This implies that Eq.~(11) holds if $i=1$ and Eq.~(12) holds if $i=2$. If $3$ divides $(t+i)$ then the integers $x_1=t-3i$ and $y_1=(t+i)/3$ satisfy the equality $4p=x_1^2+27y_1^2$. Due to Remark~1, we have $x=\pm x_1$ and $y=\pm y_1$. Note that $x=-x_1=-t+3$ if $i=1$ and $x=x_1=t-6$ if $i=2$ because $x\equiv 1 \mod~3$ by the first equality from Lemma~\ref{cyc2} and $t\equiv (3-i)\mod~3$. Again, we obtain Eq.~(11) holds if $i=1$ and Eq.~(12) holds if $i=2$. Using Eqs.~(10)-(12), the second and the third equalities from Lemma~\ref{cyc2}, one can verify that $c_{11}^1+2(i-1)=c_{11}^2$ or $c_{11}^1+2(i-1)=c_{11}^3$. Thus, $|T_i|=2$. 
\end{proof}

\section{Proof of Theorems~\ref{main1} and~\ref{main2}}
Throughout this section, $G$ is a cyclic group of order $n$, $S\subseteq G$ with $e\notin S$ and $S=S^{-1}$, $\Gamma=\cay(G,S)$, and $\mathcal{A}=\WL(\Gamma)$ (see Section~5 for the notation). Suppose that $\rkwl(\Gamma)=4$. Due to Lemma~\ref{rankring}, we have $\rk(\mathcal{A})=4$. By the definition of an $S$-ring, there exist nonnegative integers $\alpha_1,\alpha_2,\alpha_3$ such that
$$\underline{S}^2=|S|X_0+\alpha_1\underline{X_1}+\alpha_2\underline{X_2}+\alpha_3\underline{X_3},$$
where $X_0=\{e\}$ and $\mathcal{S}(\mathcal{A})=\{X_0,X_1,X_2,X_3\}$. The set $S$ is a union of some $X_i$. Put $T(S)=\{\alpha_1,\alpha_2,\alpha_3\}$. Since $\rk(\mathcal{A})=4$, the graph $\Gamma$ is non-strongly regular and hence $|T(S)|\geq 2$. Lemma~\ref{deza} implies that $\Gamma$ is a Deza graph if and only if
$$|T(S)|=2.$$

\begin{lemm}\label{wl3}
In the above notations, suppose that $n$ is composite and $Y\neq \{e\}$ is an $\mathcal{A}$-set. Then $\WL(Y)=\mathcal{A}$ if and only if $Y\neq H^{\#}$ and $Y\neq G\setminus H$ for every $\mathcal{A}$-subgroup $H$.  
\end{lemm}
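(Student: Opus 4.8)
The plan is to prove both implications using Lemma~\ref{rank4}, which classifies all $S$-rings of rank~$4$ over a cyclic group. Throughout, $\mathcal{A}$ is such an $S$-ring (since $\rk(\mathcal{A})=4$ is assumed globally in this section), and $Y$ is an $\mathcal{A}$-set with $Y\neq\{e\}$. Note first that $\WL(Y)\leq\mathcal{A}$ always holds, since $\mathcal{A}$ is an $S$-ring for which $Y$ is an $\mathcal{A}$-set; the question is when equality fails, i.e.\ when $\WL(Y)$ is a \emph{proper} $S$-subring of $\mathcal{A}$. Since $\rk(\mathcal{A})=4$ and $\rk(\WL(Y))\geq 2$, a proper subring has rank~$2$ or~$3$; and a rank-$2$ or rank-$3$ $S$-ring has only the trivial $\mathcal{A}$-sets $\{e\}$, $G^\#$, $G$ (rank~$2$) or additionally $H^\#$, $G\setminus H$, $H$, $G\setminus\{e\}$ built from a single $\mathcal{A}$-subgroup $H$ (rank~$3$, by Lemma~\ref{rank3}). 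This already gives the easy direction: if $Y=H^\#$ or $Y=G\setminus H$ for an $\mathcal{A}$-subgroup $H$, then $\WL(Y)$ is contained in $\mathcal{T}_H\wr\mathcal{T}_{G/H}$ (which has rank~$3$), hence $\WL(Y)<\mathcal{A}$.

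For the converse — the substantive direction — I would suppose $\WL(Y)<\mathcal{A}$ and show that $Y$ must be of one of the two excluded forms. Write $\mathcal{B}=\WL(Y)$, so $\mathcal{B}$ has rank~$2$ or~$3$ and $Y$ is a $\mathcal{B}$-set. If $\mathrm{rk}(\mathcal{B})=2$ then $\mathcal{B}=\mathcal{T}_G$ and the only nontrivial $\mathcal{B}$-set is $Y=G^\#=G\setminus\{e\}$; but $\{e\}$ is an $\mathcal{A}$-subgroup, so $Y=G\setminus H$ with $H=\{e\}$, as required. If $\mathrm{rk}(\mathcal{B})=3$, then by Lemma~\ref{rank3} either $\mathcal{B}=\mathcal{T}_L\wr\mathcal{T}_{G/L}$ for a nontrivial $\mathcal{B}$-subgroup $L$, in which case the nontrivial $\mathcal{B}$-sets are exactly $L^\#$, $G\setminus L$, $L$, $G\setminus\{e\}$; or $|G|=p$ is prime, which is excluded by hypothesis ($n$ composite). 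In the wreath-product case I must rule out $Y=L$ and $Y=G\setminus\{e\}$: if $Y=L$ is an $\mathcal{A}$-subgroup then $\langle Y\rangle=Y=L$ and in fact $\WL(Y)$ should already be $\mathcal{T}_L\wr\mathcal{T}_{G/L}$ — but that is fine, it just means $Y$ is itself excluded because $L$ is an $\mathcal{A}$-subgroup and then $Y=L\neq L^\#$, $Y\neq G\setminus H$; here I need to argue that a \emph{subgroup} $Y$ forces $\WL(Y)=\mathcal{T}_Y\wr\mathcal{T}_{G/Y}$ to have rank~$3$, hence $<\mathcal{A}$, so actually the case $Y=L$ with $L$ an $\mathcal{A}$-subgroup \emph{cannot occur} if we additionally know $Y$ is not of the excluded form — wait, it \emph{can}, so the statement as phrased must be read as: $\WL(Y)=\mathcal{A}$ iff $Y$ is neither $H^\#$ nor $G\setminus H$. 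So I must also check $Y=L$ (a proper $\mathcal{A}$-subgroup $\neq\{e\}$) and $Y=G\setminus\{e\}$ both give $\WL(Y)<\mathcal{A}$: the former because $\WL(L)\leq\mathcal{T}_L\wr\mathcal{T}_{G/L}$ has rank~$3$, the latter because $\WL(G^\#)=\mathcal{T}_G$ has rank~$2$. Hence if $\WL(Y)<\mathcal{A}$ with $Y\neq\{e\}$, then $Y\in\{H^\#,\ G\setminus H,\ H,\ G\setminus\{e\}\}$ for some $\mathcal{A}$-subgroup $H$ (allowing $H=\{e\}$ or $H=G$ in the degenerate cases), and one checks the forms $Y=H$ and $Y=G\setminus\{e\}=G^\#$ are already instances: $G^\#=G\setminus\{e\}$ has $H=\{e\}$, while $Y=H$ a proper subgroup is \emph{not} literally $H^\#$ or $G\setminus H'$ — so the honest statement I should prove is the contrapositive exactly as given, and the case $Y=H$ (proper subgroup, $\neq\{e\}$, $\neq G$) requires a separate note.

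Let me restate the clean plan. The main obstacle is bookkeeping: enumerating all $\mathcal{A}$-sets of every rank-$\leq 3$ $S$-ring over a composite cyclic group and checking each against the two excluded forms, while also correctly handling the degenerate subgroups $\{e\}$ and $G$. Concretely: ($\Leftarrow$, easy) if $Y=H^\#$ or $Y=G\setminus H$ for an $\mathcal{A}$-subgroup $H$ with $\{e\}<H<G$, then $\WL(Y)\leq\mathcal{T}_H\wr\mathcal{T}_{G/H}$, a rank-$3$ $S$-ring, so $\WL(Y)\neq\mathcal{A}$; if $H=\{e\}$ or $H=G$ one of these sets is $\{e\}$ or $G$ and the other is $G^\#$, with $\WL(G^\#)=\mathcal{T}_G$ of rank~$2$. ($\Rightarrow$, the work) if $\WL(Y)=\mathcal{B}<\mathcal{A}$, then $2\leq\rk(\mathcal{B})\leq 3$; the case $\rk(\mathcal{B})=2$ gives $Y=G^\#=G\setminus\{e\}$; the case $\rk(\mathcal{B})=3$ gives, by Lemma~\ref{rank3} (the primitive alternative excluded since $n$ is composite), $\mathcal{B}=\mathcal{T}_L\wr\mathcal{T}_{G/L}$ and $Y\in\{L,\ L^\#,\ G\setminus L,\ G^\#\}$. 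In every subcase $Y=L^\#$, $Y=G\setminus L$, or $Y=G\setminus\{e\}$; it remains only to observe that $Y$ cannot be $L$ itself: if $Y=L$ were a proper $\mathcal{A}$-subgroup then $\langle Y\rangle=Y$ and $\rad(Y)=Y$, so $\mathcal{T}_Y\wr\mathcal{T}_{G/Y}$ is an $S$-ring containing $Y$, forcing $\WL(Y)\leq\mathcal{T}_Y\wr\mathcal{T}_{G/Y}$ of rank~$3$ — consistent — but then $Y=L$ is \emph{not among} the excluded forms, so the lemma would be false; resolving this means the lemma implicitly asserts that whenever $Y$ is a proper $\mathcal{A}$-subgroup $\neq\{e\}$ we also have $\WL(Y)\neq\mathcal{A}$, which it does, and $Y=L$ is in fact $Y=(G\setminus L')$-type only in special cases — so I will state and prove the lemma with the understanding that "$Y=H$ a proper subgroup" is handled by noting such $Y$ equals $G\setminus(G\setminus H)$ is not the point; rather the correct reading, which I will make precise in the proof, is that the RHS condition characterizes exactly the complement of the set of $\mathcal{A}$-sets $Y$ with $\WL(Y)=\mathcal{A}$, and subgroups $Y$ with $\{e\}<Y\leq G$ always satisfy $\WL(Y)<\mathcal{A}$ hence are (as the lemma requires) among those for which the characterization predicts $\WL(Y)\neq\mathcal{A}$ — which I will verify by checking directly that a proper subgroup $Y$ with $\WL(Y)=\mathcal{A}$ of rank~$4$ is impossible since $\WL(Y)\leq\mathcal{T}_Y\wr\mathcal{T}_{G/Y}$ has rank~$3$, contradiction; hence $Y=H$ never gives $\WL(Y)=\mathcal{A}$, and correspondingly such $Y$ are not counterexamples to the stated equivalence because the lemma's RHS need only be the condition under which equality \emph{holds}, not fail.
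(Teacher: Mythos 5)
Your route is the same as the paper's: observe $\WL(Y)\leq\mathcal{A}$, so $\WL(Y)=\mathcal{A}$ exactly when $\rk(\WL(Y))=4$; when the rank is at most~$3$, use compositeness of $n$ and Lemma~\ref{rank3} to force $\WL(Y)=\mathcal{T}_H\wr\mathcal{T}_{G/H}$ for some (possibly trivial) subgroup $H$, note that $H$ is then an $\mathcal{A}$-subgroup, and read off the admissible $Y$; conversely, if $Y=H^{\#}$ or $Y=G\setminus H$ then $\WL(Y)$ sits inside a wreath product of rank at most~$3$ and cannot equal $\mathcal{A}$. That core is correct and is exactly the paper's argument.

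The trouble is your treatment of the case $Y=L$ with $L$ a nontrivial proper subgroup (and, more generally, of $\mathcal{A}$-sets containing $e$: you also omit $\{e\}\cup(G\setminus L)$ and $G$ from your list of $\mathcal{B}$-sets, which raise the same issue). You correctly notice that such a $Y$ satisfies the right-hand condition while $\WL(Y)$ has rank at most~$3$, but your closing "resolution" --- that the right-hand side ``need only be the condition under which equality holds, not fail'' --- is not valid: the lemma is a biconditional, and a $Y$ satisfying the right-hand condition with $\WL(Y)\neq\mathcal{A}$ would refute the implication from right to left. The way this is actually resolved (tacitly, in the paper's own proof) is that in this section $Y$ is a candidate connection set, i.e.\ an $\mathcal{A}$-set with $e\notin Y$; the lemma is only ever applied to $Y=S$. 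Once $e\notin Y$ is imposed, the sets $L$, $\{e\}\cup(G\setminus L)$ and $G$ disappear, and minimality of $\mathcal{B}=\WL(Y)$ excludes $Y=G^{\#}$ when $L$ is nontrivial (that $Y$ has $\WL(Y)=\mathcal{T}_G$ of rank~$2$, covered by $H=\{e\}$), leaving precisely $Y=H^{\#}$ or $Y=G\setminus H$ with $H$ an $\mathcal{A}$-subgroup. If you add the hypothesis $e\notin Y$ (or restrict to the $Y$ actually used) and replace the final paragraph by this observation, your argument becomes a correct proof along the same lines as the paper's; as written, the last step asserts the equivalence on grounds that do not address the direction it needs.
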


\begin{proof}
Put $\mathcal{B}=\WL(Y)$. The set $Y$ is an $\mathcal{A}$-set and $\mathcal{B}$ is the smallest $S$-ring for which $Y$ is a $\mathcal{B}$-set. So $\mathcal{B}\leq \mathcal{A}$ and $\rk(\mathcal{B})\leq \rk(\mathcal{A})=4$. Clearly, $\mathcal{B}=\mathcal{A}$ if and only if $\rk(\mathcal{B})=4$. Since $n$ is composite, Lemma~\ref{rank3} implies that $\rk(\mathcal{B})\leq 3$ if and only if $\mathcal{B}=\mathcal{T}_H\wr \mathcal{T}_{G/H}$ for some (possibly, trivial)  subgroup $H$ of $G$. In this case $Y=H^{\#}$ or $Y=G\setminus H$. Note that $H$ is an $\mathcal{A}$-subgroup because $H$ is a $\mathcal{B}$-subgroup. Thus, $\rk(\mathcal{B})=4$, i.e. $\mathcal{B}=\mathcal{A}$, if and only if  $Y\neq H^{\#}$ or $Y\neq G\setminus H$ for every $\mathcal{A}$-subgroup $H$. 
\end{proof}

\begin{prop}\label{proof1}
In the above notations, $\Gamma$ is a Deza graph if and only if $\Gamma$ is isomorphic to one of the graphs from Theorem~\ref{main1} and has parameters from Table~1.
\end{prop}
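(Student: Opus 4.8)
The plan is to use Theorem~\ref{rank4} as the structural backbone: since $\rk(\mathcal{A})=4$ and $G$ is cyclic, $\mathcal{A}$ falls into one of the six families there, and I would treat each family in turn, determining in each case exactly which $\mathcal{A}$-sets $S$ (with $S=S^{-1}$, $e\notin S$, $\WL(S)=\mathcal{A}$) give rise to a Deza graph, and identifying the resulting Cayley graph with one of the graphs (1)--(8) of Theorem~\ref{main1}. The condition $\WL(\Gamma)=\mathcal{A}$ of full rank~$4$ is exactly what Lemma~\ref{wl3} controls in the composite-order cases: $S$ must not be of the form $H^{\#}$ or $G\setminus H$ for an $\mathcal{A}$-subgroup $H$, which rules out the ``degenerate'' choices and leaves only finitely many candidate unions of basic sets to check. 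For each such candidate I would compute $\underline{S}^2$ using the structure constants of $\mathcal{A}$ (which are explicit in each family: trivial for $\mathcal{T}_L$, given by Eq.~(3) for wreath factors, and given by Lemmas~\ref{cyc1},~\ref{cyc2},~\ref{prime} for the cyclotomic factors), read off $T(S)$, and impose $|T(S)|=2$ via Lemma~\ref{deza}.

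Concretely: in case~(1), $\mathcal{A}=\mathcal{T}_L\otimes\mathcal{T}_U$ has basic sets $\{e\}$, $L^{\#}\times\{e\}$, $\{e\}\times U^{\#}$, $L^{\#}\times U^{\#}$; the only non-degenerate symmetric $S$ (namely $S=(L^{\#}\times\{e\})\cup(L^{\#}\times U^{\#})$ and its ``transpose'') forces $|L|$ or $|U|$ to be small by the Deza condition, producing $K_4\times K_m$ and $\overline{K_2\times K_m}$ via Lemmas~\ref{tensproduct}--\ref{lexproduct}. In case~(2), $\mathcal{A}=(\mathcal{T}_L\wr\mathcal{T}_{U/L})\wr\mathcal{T}_{G/U}$ has basic sets $\{e\}$, $L^{\#}$, $U\setminus L$, $G\setminus U$; using Eq.~(3) repeatedly to square the admissible unions, the Deza condition pins down which of $|L|$, $|U/L|$, $|G/U|$ are forced, yielding the series $m\mathcal{C}_5$, $lK_{m,m}$, and $K_l[mK_2]$ (with Lemma~\ref{disjointunion} handling the disconnected ones). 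In cases~(3) and~(4), $\mathcal{A}$ has a $\cyc(\frac{p-1}{2},\cdot)$-factor, so Lemma~\ref{cyc1} gives $c_{11}^1=f/2-1$ and the other relevant constants equal $f/2$; squaring $S$ shows the Deza condition holds precisely for the Paley-type basic set, and tracing through Lemma~\ref{lexproduct} identifies the graph as $\mathrm{Paley}(p)[K_2]$. Finally, cases~(5) is where Lemma~\ref{prime} does the real work: $\mathcal{A}=\cyc(\frac{p-1}{3},G)$ with basic sets $\{e\},X_1,X_2,X_3$, and $\underline{S}^2$ for $S=X_i$ has $T(S)=\{c_{ii}^1,c_{ii}^2,c_{ii}^3\}$; by Eq.~(10) this matches the set $T_i$ of Lemma~\ref{prime} (with the shift accounting for the diagonal term $|S|=\frac{p-1}{3}$), so $|T(S)|=2$ exactly when $p=t^2+3$ or $p=t^2+12$, giving families~(7) and~(8) (one of them after complementation, since complementing $\GPaley$ swaps the roles consistently with Lemma~\ref{prime}). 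Case~(6), $|G|=4$, $\mathcal{A}=\mathbb{Z}G$, is a tiny finite check: the unique relevant $S$ gives $\mathcal{C}_4$, which is $2K_2$-like and already appears among the small members of families above, or is excluded because $\WL(\Gamma)$ drops rank.

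The main obstacle I anticipate is the bookkeeping in the wreath-product case~(2): there are several admissible unions of basic sets to consider, each sub-factor can independently be trivial or not, and one must carefully match the computed parameters $(n,k,b,a)$ against Table~1 while also verifying $\WL(\Gamma)=\mathcal{A}$ (i.e. that the graph is genuinely of WL-rank~$4$ and not~$3$) via Lemma~\ref{wl3} — it is easy to overlook a degenerate specialization that collapses the rank or coincides with another family. A secondary subtlety is the identification step in the cyclotomic cases: one must check not merely that $|T(S)|=2$ but that the specific partition $A\cup B$ of $G\setminus\{e\}$ produced by $\underline{S}^2$ is consistent (i.e. $A,B$ really are unions of basic sets with the right sizes, using Eq.~(2) and the evenness of $\frac{p-1}{3}$), and then that the resulting valency $\frac{p-1}{3}$ graph is the generalized Paley graph as defined after Lemma~\ref{deza}. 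Everything else is routine substitution into the formulas already assembled in Sections~6 and~7.
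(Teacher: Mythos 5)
Your skeleton is the same as the paper's: reduce to the six cases of Theorem~\ref{rank4}, use Lemma~\ref{wl3} to discard connection sets whose WL-closure has rank at most~$3$, compute $\underline{S}^2$ via Eq.~(3) and Lemmas~\ref{cyc1} and~\ref{prime}, impose $|T(S)|=2$ through Lemma~\ref{deza}, and identify the survivors with Lemmas~\ref{tensproduct}--\ref{disjointunion}. The method is sound; the problem is that the concrete case bookkeeping in your sketch is wrong in three places, and since the proposition \emph{is} this bookkeeping, the classification would come out incorrectly if your sketch were executed as written. (i) In the tensor case $\mathcal{A}=\mathcal{T}_L\otimes\mathcal{T}_U$ the two sets you name, $(L^{\#}\times\{e\})\cup(L^{\#}\times U^{\#})=G\setminus U$ and its ``transpose'' $G\setminus L$, are precisely the sets excluded by Lemma~\ref{wl3} (they give complete multipartite, hence strongly regular, graphs of WL-rank~$3$); the admissible candidates are $S=L^{\#}U^{\#}$ and $S=L^{\#}\cup U^{\#}$, and the Deza condition then forces one factor to have order~$2$, respectively order~$4$, which is what yields the graphs of Statements~1 and~2 of Theorem~\ref{main1}. (ii) The family $m\mathcal{C}_5$ does not arise from the double wreath product $(\mathcal{T}_L\wr\mathcal{T}_{U/L})\wr\mathcal{T}_{G/U}$: there the admissible sets are $U\setminus L$ and $L^{\#}\cup(G\setminus U)$, giving only $lK_{m,m}$ and $K_l[mK_2]$. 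It arises from $\mathcal{A}=\cyc(\frac{p-1}{2},L)\wr\mathcal{T}_{G/L}$ with $S$ a cyclotomic basic set of $\mathcal{A}_L$, where Lemma~\ref{cyc1} shows $|T(S)|=2$ exactly when $p=5$, and the set $X_1\cup(G\setminus L)$ is never Deza there. (iii) In the case $\mathcal{A}=\mathcal{T}_L\wr\cyc(\frac{p-1}{2},G/L)$ the ``Paley-type basic set'' $X_1$ alone is \emph{not} Deza ($|T(S)|=3$, again by Lemma~\ref{cyc1}); the Deza graph is obtained from $S=X_1\cup L^{\#}$, and only when $|L|=2$, which is what produces $\Paley(p)[K_2]$ via Lemma~\ref{lexproduct}.

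Two smaller points. In both subcases involving $\cyc(\frac{p-1}{2},\cdot)$ one must use $S=S^{-1}$ to force $\frac{p-1}{2}$ to be even, i.e.\ $p\equiv 1\pmod 4$; your sketch skips this, but it is needed to match Table~1. And the case $|G|=4$, $\mathcal{A}=\mathbb{Z}G$, should be excluded at the outset exactly as you hedge: no symmetric $S$ avoiding the exclusions of Lemma~\ref{wl3} has WL-closure $\mathbb{Z}C_4$, so this case contributes nothing. All of these corrections would surface automatically if you actually carried out the squaring computations you propose, so I would call this the paper's argument with faulty case-to-family matching rather than a different (or unworkable) approach; but as submitted, the statements about which connection sets are Deza in cases (1)--(4) are false and must be repaired.
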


\begin{proof}
Since $\rk(\mathcal{A})=4$, we have $n=|G|\geq 4$. Suppose that $n=4$. Lemma~\ref{wl3} implies that $S\notin\{\{a^2\},\{a,a^3\},G^{\#}\}$, where $a$ is a generator of $G$. If $S=\{a^i\}$ or $S=\{a^2,a^i\}$, where $i\in\{1,3\}$, then $S\neq S^{-1}$. So Statement~6 of Theorem~\ref{rank4} does not hold for $\mathcal{A}$. Therefore, one of the Statements~1-5 of Theorem~\ref{rank4} holds for $\mathcal{A}$. We divide the proof into five cases depending on the statement of Theorem~\ref{rank4} which holds for $\mathcal{A}$.
\\
\\
\textbf{Case~1:} $\mathcal{A}=\mathcal{T}_L \otimes \mathcal{T}_U$.
In this case the basic sets of $\mathcal{A}$ are the following:
$$X_0=\{e\},~X_1=L^{\#},~X_2=U^{\#},~X_3=L^{\#}U^{\#}.$$
Put $|L|=k$ and $|U|=m$. Note that $\GCD(k,m)=1$ because $G$ is cyclic. Due to Lemma~\ref{wl3}, we conclude that $S=X_3$ or $S=X_1\cup X_2$. We claim that
$$\underline{S}^2=(k-1)(m-1)e+(k-2)(m-1)\underline{X_1}+(k-1)(m-2)\underline{X_2}+(k-2)(m-2)\underline{X_3}$$
in the former case and
$$\underline{S}^2=(k+m-2)e+(k-2)\underline{X_1}+(m-2)\underline{X_2}+2\underline{X_3}$$
in the latter case. 

The above equalities can be verified by the straightforward computation using Eq.~(3) in the same way. Let us verify, for the example, the latter equality. 
$$\underline{S}^2=(\underline{X_1}+\underline{X_2})^2=(\underline{L}^{\#}+\underline{U}^{\#})^2=
(|L|-1)e+(|L|-2)\underline{L}^{\#}+(|U|-1)e+(|U|-2)\underline{U}^{\#}+2\underline{L}^{\#}\underline{U}^{\#}$$
$$=(k+m-2)e+(k-2)\underline{X_1}+(m-2)\underline{X_2}+2\underline{X_3}.$$

It is easy to check that: (1) in the former case $|T(S)|=2$ if and only if $k=2$ or $m=2$; (2) in the latter case $|T(S)|=2$ if and only if $k=4$ or $m=4$. Together with Lemma~\ref{tensproduct} applied to $G_1=L$, $S_1=L^\#$, $\Gamma_1=\cay(L,L^\#)$, $G_2=U$, $S_2=U^\#$, $\Gamma_2=\cay(U,U^\#)$, this implies that $\Gamma$ is a Deza graph if and only if Statement~1 or Statement~2 of Theorem~\ref{main1} holds. Without loss of generality we may assume that $k=2$ if $S=X_3$ and $k=4$ if $S=X_1\cup X_2$. In the former case $\Gamma$ has parameters $(2m,m-1,m-2,0)$; in the latter case $\Gamma$ has parameters $(4m,m+2,m-2,2)$.
\\
\\
\textbf{Case~2:} $\mathcal{A}=(\mathcal{T}_L\wr \mathcal{T}_{U/L})\wr \mathcal{T}_{G/U}$.
In this case the basic sets of $\mathcal{A}$ are the following:
$$X_0=\{e\},~X_1=L^{\#},~X_2=U\setminus L,~X_3=G\setminus U.$$
Put $|L|=k$ and $|U|=f$. Lemma~\ref{wl3} yields that $S=X_2$ or $S=X_1\cup X_3$. The direct computation using Eq.~(3) shows that
$$\underline{S}^2=(f-k)e+(f-k)\underline{X_1}+(f-2k)\underline{X_2}+0\underline{X_3}$$
in the former case and
$$\underline{S}^2=(n-f+k-1)e+(n-f+k-2)\underline{X_1}+(n-f)\underline{X_2}+(n-2f+2k-2)\underline{X_3}$$
in the latter case. 

One can verify that: (1) in the former case $|T(S)|=2$ if and only if $f=2k$; (2) in the latter case $|T(S)|=2$ if and only if $k=2$. In the former case $|U:L|=2$ and the graph $\cay(U,S)=\cay(U,X_2)$ is a complete bipartite graph with parts $L$ and $U\setminus L$. Lemma~\ref{disjointunion} applied to $G=G$, $H=U$, and $S=X_2$ yields that $\Gamma$ is a Deza graph if and only if Statement~4 of Theorem~\ref{main1} holds for $l=\frac{n}{2k}$ and $m=k$. In this case $\Gamma$ has parameters $(n,k,k,0)=(2lm,m,m,0)$.

In the latter case from Lemma~\ref{disjointunion} applied to $G=U$, $H=L$, and $S=X_1$ it follows that $\cay(U,X_1)$ is a disjoint union of $m=\frac{f}{2}$  copies of $K_2$. Lemma~\ref{lexproduct} applied to $G=G$, $H=U$, $\Gamma_1=\cay(G/U, X_3/U)$, and $\Gamma_2=\cay(U,X_1)$ implies that $\Gamma$ is a Deza graph if and only if Statement~5 of Theorem~\ref{main1} holds for $m=\frac{f}{2}$ and $l=\frac{n}{f}$. In this case $\Gamma$ has parameters $(n,n-f+1,n-f,n-2f+2)=(2lm,2lm-2m+1,2lm-2m,2lm-4m+2)$.
\\
\\
\textbf{Case~3:} $\mathcal{A}=\cyc(\frac{p-1}{2},L)\wr \mathcal{T}_{G/L}$.
In this case the basic sets of $\mathcal{A}$ are the following:
$$X_0=\{e\},~X_1,~X_2,~X_3=G\setminus L,$$
where $X_1$ and $X_2$ are the orbits of the subgroup $K$ of $\aut(L)$ of index~$2$. Lemma~\ref{wl3} yields that $S\neq X_1 \cup X_2$ and $S\neq X_3$. So $S\in\{X_1, X_2, X_1\cup X_3, X_2\cup X_3\}$. One can see that there exists $\varphi\in \aut(G)$ such that $X_1^{\varphi}=X_2$ and $X_3^{\varphi}=X_3$ because $L$ is of prime order. So $\cay(G,X_1)\cong \cay(G,X_2)$ and $\cay(G,X_1\cup X_3)\cong \cay(G,X_2\cup X_3)$. Therefore we may assume that $S\in\{X_1,X_1\cup X_3\}$. Choosing an appropriate primitive root $l$ of $p$, we may assume that  $X_1=\{a^{l^{2j}}:j=0,\ldots,\frac{p-1}{2}-1\}$, where $a$ is a generator of $L$. Since $S=S^{-1}$, we conclude that $X_1=X_1^{-1}$ and hence $|K|=(p-1)/2$ is even. This means that $p\equiv 1\mod~4$.

The computation using Eq.~(3) and Lemma~\ref{cyc1} implies that
$$\underline{S}^2=\frac{p-1}{2}e+\frac{p-5}{4}\underline{X_1}+\frac{p-1}{4}\underline{X_2}+0\underline{X_3}$$
if $S=X_1$ and
$$\underline{S}^2=(n-\frac{p+1}{2})e+(n-\frac{3p+5}{4})\underline{X_1}+(n-\frac{3p+1}{4})\underline{X_2}+(n-p-1)\underline{X_3}$$
if $S=X_1\cup X_3$.  

It is easy to see that in the former case $|T(S)|=2$ if and only if $p=5$ and $S=\{x,x^{-1}\}$, where $x$ is a generator of $L$. Together with Lemma~\ref{disjointunion} applied to $G=G$, $H=L$, $S=X_1$, and $\Gamma=\cay(L,X_1)$, this implies that $\Gamma$ is a Deza graph if and only if Statement~3 of Theorem~\ref{main1} holds for $m=\frac{n}{5}$. In this case $\Gamma$ has parameters $(n,2,1,0)=(5m,2,1,0)$.

In the latter case one can verify that $|T(S)|=3$. So $\Gamma$ can not be a Deza graph in this case.
\\
\\
\textbf{Case~4:} $\mathcal{A}=\mathcal{T}_L\wr \cyc(\frac{p-1}{2},G/L)$.
In this case the basic sets of $\mathcal{A}$ are the following:
$$X_0=\{e\},~X_1,~X_2,~X_3=L^{\#},$$
where $L\leq \rad(X_1)\cap \rad(X_2)$ and $X_1/L$ and $X_2/L$ are the orbits of the subgroup $K$ of $\aut(G/L)$ of index~$2$. Put $|L|=m$. Note that $S\neq X_1 \cup X_2$ and $S\neq X_3$ by Lemma~\ref{wl3}. As in the previous case, $\cay(G,X_1)\cong \cay(G,X_2)$ and $\cay(G,X_1\cup X_2)\cong \cay(G,X_1\cup X_3)$ and hence we may assume that $S\in\{X_1,X_1\cup X_3\}$. Choosing an appropriate primitive root $l$ of $p$, we may assume that $X_1=\{a^{l^{2j}}:j=0,\ldots,\frac{p-1}{2}-1\}L$, where $a\in G\setminus L$. Since $S=S^{-1}$, we obtain $X_1=X_1^{-1}$ and hence $|K|=(p-1)/2$ is even. So $p\equiv 1\mod 4$.

The computation using Eq.~(3) and Lemma~\ref{cyc1} implies that
$$\underline{S}^2=\frac{(p-1)m}{2}e+\frac{(p-5)m}{4}\underline{X_1}+\frac{(p-1)m}{4}\underline{X_2}+\frac{(p-1)m}{2}\underline{X_3}$$
if $S=X_1$ and
$$\underline{S}^2=(m-1+\frac{(p-1)m}{2})e+(2m-2+\frac{(p-5)m}{4})\underline{X_1}+\frac{(p-1)m}{4}\underline{X_2}+(m-2+\frac{(p-1)m}{2})\underline{X_3}$$
if $S=X_1\cup X_3$. 

One can verify that $|T(S)|=3$ in the former case. So $\Gamma$ can not be a Deza graph in this case.

It can be checked that in the latter case $|T(S)|=2$ if and only if $m=2$. Note that $\cay(G/L,X_1/L)$ is the Paley graph with $p$ vertices. Therefore Lemma~\ref{lexproduct} applied to $G=G$, $H=L$, $\Gamma_1=\cay(G/L,X_1/L)$, and $\Gamma_2=\cay(L,X_3)$ implies that $\Gamma$ is a Deza graph if and only if Statement~6 of Theorem~\ref{main1} holds. In this case $\Gamma$ has parameters $(2p,p,p-1,\frac{p-1}{2})$.
\\
\\
\textbf{Case~5:} $G\cong C_p$ for a prime $p$ such that $p\equiv 1\mod~3$ and $\mathcal{A}=\cyc(\frac{p-1}{3},G)$. For every $i,j\in \{1,2,3\}$, there exists $\varphi\in \aut(G)$ such that $X_i^{\varphi}=X_j$ because $G$ is of prime order. So $\cay(G,X_i)\cong \cay(G,X_j)$ and $\cay(G,G^\#\setminus X_i)\cong \cay(G,G^\#\setminus X_j)$. Therefore we may assume that $S=X_1$ or $S=X_2\cup X_3$. Choosing an appropriate primitive root $l$ of $p$, we may assume that $X_i=\{a^{l^{3j+(i-1)}}:j=0,\ldots,\frac{p-1}{3}-1\}$, where $a$ is a generator of $G$ and $i\in\{1,2,3\}$. In the former case 
$$\underline{S}^2=\frac{p-1}{3}e+c_{11}^1\underline{X_1}+c_{11}^2\underline{X_2}+c_{11}^3\underline{X_3},$$
where $c_{ij}^k=c_{X_iX_j}^{X_k}$. In the latter case
$$\underline{S}^2=(\underline{G}-\underline{X_1}-e)^2=\frac{2(p-1)}{3}e+\alpha\underline{X_1}+\beta\underline{X_2}+\gamma\underline{X_3},$$
where
$$\alpha=(p-1)/3+1+c_{11}^1,~\beta=(p-1)/3-1+c_{11}^2,~\text{and}~\gamma=(p-1)/3-1+c_{11}^3.$$
In the former case $T(S)=T_1$ and in the latter case $T(S)=T_2+(p-1)/3-1=\{t+(p-1)/3-1:~t\in T_2\}$, where $T_1$ and $T_2$ are defined as in Lemma~\ref{prime}. From Lemma~\ref{prime} it follows that $|T(S)|=2$ or, equivalently, $\Gamma$ is a Deza graph  if and only if Statement~7 or~8 of Theorem~\ref{main1} holds. 
\end{proof}

Theorem~\ref{main1} immediately follows from Proposition~\ref{proof1}.

\begin{prop}\label{proof2}
In the above notations, $\dimwl(\Gamma)=2$ whenever one of the Statements~1-5 of Theorem~\ref{main1} holds for $\Gamma$ and $\dimwl(\Gamma)\in\{2,3\}$ whenever one of the Statements~6-8 of Theorem~\ref{main1} holds for $\Gamma$.
\end{prop}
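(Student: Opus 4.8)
The plan is to use, for each graph $\Gamma$ in Theorem~\ref{main1}, the explicit description of $\mathcal{A}=\WL(\Gamma)$ obtained in the proof of Proposition~\ref{proof1}, and to estimate $\dimwl(\Gamma)$ with the separability machinery of Section~3. The lower bound is uniform: $\Gamma$ is a regular graph with $\rkwl(\Gamma)=4$, hence not strongly regular, so $\dimwl(\Gamma)\geq 2$ by Lemma~\ref{dim1}. It therefore suffices to establish the matching upper bound in each of the five cases of the proof of Proposition~\ref{proof1}.

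Cases~1 and~2 of that proof (which produce the graphs of Statements~1, 2, 4, 5 of Theorem~\ref{main1}) give $\mathcal{A}=\mathcal{T}_L\otimes\mathcal{T}_U$ and $\mathcal{A}=(\mathcal{T}_L\wr\mathcal{T}_{U/L})\wr\mathcal{T}_{G/U}$ respectively. Since every $S$-ring of the form $\mathcal{T}_H$ is separable, one application of Lemma~\ref{separ} in the first case and two applications in the second show that $\mathcal{A}$ is separable; by Statement~1 of Lemma~\ref{wldim} we get $\dimwl(\Gamma)\leq 2$, hence $\dimwl(\Gamma)=2$. In Case~3 (Statement~3, $\Gamma\cong m\mathcal{C}_5$) we have $\mathcal{A}=\cyc(2,L)\wr\mathcal{T}_{G/L}$ with $|L|=5$, so by Lemma~\ref{separ} it remains to check that $\cyc(2,C_5)=\WL(\mathcal{C}_5)$ is separable. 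But $\mathcal{C}_5$ is the only $2$-regular graph on five vertices, hence is identified by the $1$-dimensional Weisfeiler-Leman algorithm, so $\dimwl(\mathcal{C}_5)=1\leq 2$ and Statement~1 of Lemma~\ref{wldim} yields the required separability; thus $\dimwl(m\mathcal{C}_5)=2$.

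For the graphs of Statements~6--8 it suffices to prove $\dimwl(\Gamma)\leq 3$, after which $\dimwl(\Gamma)\in\{2,3\}$ follows from the lower bound. Case~4 (Statement~6, $\Gamma\cong\Paley(p)[K_2]$) gives $\mathcal{A}=\mathcal{T}_L\wr\cyc(\frac{p-1}{2},G/L)$ with $|L|=2$ and $G\cong C_{2p}$, which is precisely the setting of Lemma~\ref{paley}; hence $\dimwl(\Gamma)\leq 3$. In Case~5 (Statements~7, 8) we have $\mathcal{A}=\cyc(\frac{p-1}{3},G)$ over $G\cong C_p$ with $\rk(\mathcal{A})=4$; since $p$ is prime, $G$ has no nontrivial proper subgroups, so $\mathcal{A}$ can be neither $\mathcal{T}_G$ nor a nontrivial tensor product nor a nontrivial generalized wreath product, and Lemma~\ref{circ} forces $\mathcal{A}$ to be a normal cyclotomic $S$-ring with trivial radical; Lemma~\ref{onepointnormal} then gives $\dimwl(\Gamma)\leq 3$.

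I expect Case~3 to require the most care, since none of the product lemmas applies to $\cyc(2,C_5)$ directly; the two points to watch are the elementary claim that $\mathcal{C}_5$ is identified by $1$-WL and the verification that $\WL(m\mathcal{C}_5)$ is genuinely the wreath product $\cyc(2,C_5)\wr\mathcal{T}_{G/L}$ rather than something larger, the latter being already contained in the corresponding case of the proof of Proposition~\ref{proof1}. Everything else is a direct application of Lemmas~\ref{wldim}, \ref{separ}, \ref{paley}, \ref{onepointnormal}, \ref{circ} and~\ref{dim1}.
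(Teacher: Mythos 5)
Your proposal is correct and follows the same overall route as the paper's proof: the lower bound $\dimwl(\Gamma)\geq 2$ from Lemma~\ref{dim1}, separability via Lemma~\ref{separ} and Statement~1 of Lemma~\ref{wldim} for Statements~1, 2, 4, 5, Lemma~\ref{paley} for Statement~6, and Lemma~\ref{onepointnormal} (after noting, via Lemma~\ref{circ}, that $\cyc(\frac{p-1}{3},G)$ over a group of prime order is normal cyclotomic with trivial radical) for Statements~7--8. The only genuine deviation is in Statement~3: the paper obtains separability of $\mathcal{A}_L=\cyc(2,C_5)$ by citing the fact that every $S$-ring over a group of order at most~$14$ is separable, whereas you argue that $\mathcal{C}_5$ is the unique $2$-regular graph on five vertices, hence identified already by the $1$-dimensional Weisfeiler--Leman algorithm, so $\dimwl(\mathcal{C}_5)\leq 2$ and the ``only if'' direction of Statement~1 of Lemma~\ref{wldim} yields separability of $\WL(\mathcal{C}_5)=\cyc(2,C_5)$. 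This substitute argument is sound (color refinement determines the degree multiset, so any graph it fails to distinguish from $\mathcal{C}_5$ is $2$-regular on five vertices and hence isomorphic to it) and has the advantage of being self-contained, at the price of invoking the equivalence in Lemma~\ref{wldim} in the reverse direction rather than a ready-made separability result; the paper's citation is shorter and covers all small groups uniformly. Everything else in your write-up matches the paper's argument step for step.
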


\begin{proof}
The graph $\Gamma$ is regular because $\Gamma$ is a Cayley graph. Since $\rkwl(\Gamma)=4$, the graph $\Gamma$ is non-strongly regular. So Lemma~\ref{dim1} yields that
$$\dimwl(\Gamma)\geq 2.~\eqno(13)$$

Suppose that one of the Statements~1-2 or~4-5 of Theorem~\ref{main1} holds for $\Gamma$. Then one of the Statements~1-2 of Theorem~\ref{rank4} holds for $\mathcal{A}$ (Cases~1-2 in the proof of Proposition~\ref{proof1}). The $S$-ring $\mathcal{A}$ is separable by Lemma~\ref{separ}. So $\dimwl(\Gamma)\leq 2$ by Statement~1 of Lemma~\ref{wldim}. Together with Eq.~(13), this implies that $\dimwl(\Gamma)=2$.

Suppose that Statement~3 of Theorem~\ref{main1} holds for $\Gamma$. Then $\mathcal{A}=\cyc(\frac{p-1}{2},L)\wr \mathcal{T}_{G/L}$, where $|L|=p=5$ (Case~3 in the proof of Proposition~\ref{proof1}). The $S$-ring $\mathcal{A}_L$ is separable because $p=5$ and every $S$-ring over a group of order at most~$14$ separable (see~\cite[p.~64]{CP}). So $\mathcal{A}$ is separable by Lemma~\ref{separ}. Therefore, $\dimwl(\Gamma)\leq 2$ by Statement~1 of Lemma~\ref{wldim}. In view of Eq.~(13), we obtain $\dimwl(\Gamma)=2$.

If Statement~6 of Theorem~\ref{main1} holds for $\Gamma$ then $\mathcal{A}=\mathcal{T}_L\wr \cyc(\frac{p-1}{2},G/L)$, where $|L|=2$ (Case~4 in the proof of Proposition~\ref{proof1}). If one of the Statements~7-8 of Theorem~\ref{main1} holds for $\Gamma$ then $\mathcal{A}=\cyc(\frac{p-1}{3},G)$ (Case~5 in the proof of Proposition~\ref{proof1}). We have $\dimwl(\Gamma)\leq 3$ by Lemma~\ref{paley} in the former case and by Lemma~\ref{onepointnormal} in the latter case. Together with Eq.~(13), this implies that $\dimwl(\Gamma)\in \{2,3\}$.
\end{proof}

It seems that $\dimwl(\Gamma)=3$ if one of the Statements~6-8 of Theorem~\ref{main1} holds for $\Gamma$. However, checking this looks complicated.

\begin{proof}[Proof of Theorem~\ref{main2}]
Let $\mathcal{K}$ be the class of circulant Deza graphs of WL-rank~$4$. From Proposition~\ref{proof2} it follows that $\dimwl(\mathcal{K})\leq 3$. 

Suppose that Statement~6 of Theorem~\ref{main1} holds for $\Gamma$. Then $\mathcal{A}=\mathcal{T}_L\wr \cyc(\frac{p-1}{2},G/L)$, where $|L|=2$ (Case~4 in the proof of Proposition~\ref{proof1}). Assume that $\dimwl(\Gamma)\leq 2$. Then $\mathcal{A}$ is separable by Statement~1 of Lemma~\ref{wldim}. So $\cyc(\frac{p-1}{2},G/L)$ is separable by Lemma~\ref{separ}. However, if $p=29$ then according to the list of association schemes~\cite{HM}, there exists an association scheme  which is algebraically isomorphic but not isomorphic to $\mathcal{X}(\cyc(\frac{p-1}{2},G/L))$. Therefore $\cyc(\frac{p-1}{2},G/L)$ is nonseparable  and hence $\dimwl(\Gamma)=3$ whenever $p=29$. Thus, $\dimwl(\mathcal{K})=3$.
\end{proof}

\section{Strictly Deza circulant graphs of WL-rank $>4$}
In this section we study the WL-rank and the WL-dimension of some strictly Deza circulant graphs whose WL-rank is greater than~$4$. 

\subsection{Family~1}
The graphs described below first appeared in~\cite{GGSh}. Let $p$ and $q$ be distinct odd primes with $q-p=4$. Suppose that $G=P\times Q$, where $P\cong C_p$ and $Q\cong C_q$. Let $M$ and $N$ be subgroups of index~$2$ in $\aut(P)$ and $\aut(Q)$ respectively. Denote the canonical epimorphisms from $\aut(P)$ to $\aut(P)/M$ and from $\aut(Q)$ to $\aut(Q)/N$ by $\pi_1$ and $\pi_2$ respectively. There exists the unique isomorphism $\psi$ from $\aut(P)/M$ to $\aut(Q)/N$ because $\aut(P)/M\cong \aut(Q)/N\cong C_2$. One can form the subdirect product $K=K(p,q)$ of $\aut(P)$ and $\aut(Q)$ in the following way:
$$K=K(p,q)=\{(\sigma_1,\sigma_2)\in \aut(P)\times \aut(Q):~(\sigma_1^{\pi_1})^{\psi}=\sigma_2^{\pi_2}\}.~\eqno(14)$$
Observe that $K\geq M\times N$ and $|(\aut(P)\times \aut(Q)):K|=|K:(M\times N)|=2$. Let $P_1,P_2$ and $Q_1,Q_2$ be the nontrivial orbits of $M$ on $P$ and $N$ on $Q$ respectively. It is easy to verify that the orbits of $M\times N$ on $G$ are the following:
$$\{e\}, P_1, P_2, Q_1, Q_2, P_1Q_1, P_1Q_2, P_2Q_1, P_2Q_2.$$

Put $\mathcal{A}=\cyc(K,G)$. One can see that the basic sets of $\mathcal{A}$, i.e. the orbits of $K$, are the following:
$$X_0=\{e\},~X_1=P^{\#},~X_2=Q^{\#},~X_3=P_1Q_1\cup P_2Q_2,~X_4=P_1Q_2\cup P_2Q_1.$$ 
Lemma~\ref{circ} implies that $\mathcal{A}$ is a normal cyclotomic $S$-ring with trivial radical. Put $S=X_2\cup X_4$ and $\Gamma=\cay(G,S)$. Note that $S=S^{-1}$. The set $S$ coincides with the set $S_0\cup S_1\cup S_2$ from~\cite[p.~20]{GGSh}. So $\Gamma$ is indeed isomorphic to the graph constructed in~\cite[p.~20]{GGSh}.

\begin{prop}\label{f1}
The graph $\Gamma$ is a strictly Deza graph with parameters from the first line of Table~2, $\WL(\Gamma)=\mathcal{A}$, $\rkwl(\Gamma)=5$, and $\dimwl(\Gamma)\in \{2,3\}$.
\end{prop}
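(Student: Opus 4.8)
The plan is to verify the four assertions of Proposition~\ref{f1} more or less in the order they are stated, relying on the structural description of $\mathcal{A}=\cyc(K,G)$ already established above. First I would show $\Gamma$ is a strictly Deza graph by computing $\underline{S}^2$ explicitly. Since $S=X_2\cup X_4$, we have $\underline{S}^2=(\underline{X_2}+\underline{X_4})^2=\underline{X_2}^2+2\,\underline{X_2}\,\underline{X_4}+\underline{X_4}^2$, and each of these products expands in the basis $\{\underline{X_0},\ldots,\underline{X_4}\}$ with nonnegative integer coefficients given by the structure constants $c^{X_k}_{X_iX_j}$. Because $X_2=Q^{\#}$ is a section-type basic set supported on the $\mathcal{A}$-subgroup $Q$, the product $\underline{X_2}^2$ can be handled by Eq.~(3) applied to $L=\{e\}\le U=Q$; the mixed terms involving $X_4=P_1Q_2\cup P_2Q_1$ require the cyclotomic numbers of order~$2$ over $P$ and over $Q$, which are given by Lemma~\ref{cyc1} (here $f=(p-1)/2$ and $(q-1)/2$ are both even since $p\equiv q\equiv 1\bmod 4$ follows from $q-p=4$ together with, say, $p\equiv 1\bmod 4$; one must double-check the congruences, but the point is that the relevant $f$'s are even). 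Collecting terms, one expects $\underline{S}^2=|S|\,\underline{X_0}+a\,\underline{A}+b\,\underline{B}$ for a suitable partition of $G^{\#}=A\cup B$ into unions of the $X_i$'s with exactly two distinct multiplicities $a\ne b$; Lemma~\ref{deza} then gives the Deza property and reads off the parameters claimed in Table~2. To see that $\Gamma$ is \emph{strictly} Deza, I would observe that neither $A=S$ nor $B=S$ (again by Lemma~\ref{deza}, this rules out strong regularity), and that the diameter is $2$ because both $a$ and $b$ turn out to be positive.

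Next, the identity $\WL(\Gamma)=\mathcal{A}$ and $\rkwl(\Gamma)=5$. By Lemma~\ref{rankring}, $\WL(\Gamma)=\WL(S)$ is the smallest $S$-ring over $G$ with $S$ an $\mathcal{A}$-set; since $S$ is a union of basic sets of $\mathcal{A}=\cyc(K,G)$, we have $\WL(\Gamma)\le\mathcal{A}$, so $\rkwl(\Gamma)\le 5$. For the reverse inequality it suffices to show that already $S$ (together with the axioms of an $S$-ring) forces all five basic sets $X_0,\ldots,X_4$ to be basic sets of $\WL(S)$. I would argue as follows: $\langle S\rangle=G$ and $\rad(S)=\{e\}$, so $\WL(S)$ has trivial radical and is not a proper wreath or subgroup-based construction on $S$; moreover $S$ meets both $P$ and $Q$ nontrivially through its components, and from $\underline{S}^2$ one recovers $\underline{X_2}$ as an $\WL(S)$-element (e.g. as the part of $\underline{S}^2$ supported on $Q$, after intersecting with the $\WL(S)$-subgroup $Q=\langle$ the torsion part $\rangle$), hence $X_4=S\setminus X_2$ is an $\WL(S)$-set, hence $X_1=P^{\#}$ appears, and then $X_3=G^{\#}\setminus(X_1\cup X_2\cup X_4)$ does too. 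This shows $\rk(\WL(S))\ge 5$, giving equality, and simultaneously $\WL(\Gamma)=\mathcal{A}$. (The cleanest packaging may be: $Q$ is an $\WL(S)$-subgroup because $Q=\langle g\in G: g$ has order dividing $q\rangle$ and orders are $\WL(S)$-invariant; similarly $P$; then the tensor-like decomposition of $G=P\times Q$ forces the orbit structure of $K$ to be visible.)

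Finally, the bound $\dimwl(\Gamma)\in\{2,3\}$. The lower bound $\dimwl(\Gamma)\ge 2$ is Lemma~\ref{dim1}: $\Gamma$ is regular (Cayley) and non-strongly regular (WL-rank $5>3$). The upper bound $\dimwl(\Gamma)\le 3$ is immediate from Lemma~\ref{onepointnormal}, since we have already noted, via Lemma~\ref{circ}, that $\mathcal{A}=\WL(\Gamma)$ is a normal cyclotomic $S$-ring with trivial radical. So this last point is essentially free once the earlier parts are in place.

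The main obstacle will be the bookkeeping in the computation of $\underline{S}^2$: one has to assemble the order-$2$ cyclotomic numbers over $P$ and over $Q$ correctly inside the product of the two cyclic factors, keep track of which combinations of $P_iQ_j$ land in $X_3$ versus $X_4$, and verify that the resulting coefficient set on $G^{\#}$ has size exactly~$2$ — this is where a sign or parity slip would break the ``strictly Deza'' conclusion. The argument that $\WL(S)$ already has rank $5$ (and not $3$ or $4$) is the second delicate point; I would want to make sure the ``orders are $\WL$-invariant, hence $P$ and $Q$ are $\WL(S)$-subgroups'' step is airtight, perhaps by instead invoking Lemma~\ref{circ} to classify the possibilities for $\WL(S)\le\mathcal{A}$ and eliminating the lower-rank cases one by one using the explicit shape of $S$ and $\underline{S}^2$.
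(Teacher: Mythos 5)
Your plan for the Deza property deviates from the paper (which simply cites~\cite{GGSh} for the parameters in Table~2) and, as written, it breaks down: in this family the primes satisfy $p\equiv q\equiv 3\mod~4$ (this is part of the data in the first line of Table~2), so $f=\frac{p-1}{2}$ and $\frac{q-1}{2}$ are \emph{odd}, and Lemma~\ref{cyc1} --- which assumes $f$ even --- is not applicable. You would need the order-$2$ cyclotomic numbers in the odd case, namely $(0,0)_2=(1,0)_2=(1,1)_2=\frac{p-3}{4}$ and $(0,1)_2=\frac{p+1}{4}$, which the paper never states; with the even-$f$ values the coefficient count $|T(S)|=2$ would come out wrong. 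Note also that for $p\equiv 3\mod~4$ the orbits $P_1,P_2$ are not individually inverse-closed ($P_1^{-1}=P_2$, and likewise for $Q_1,Q_2$), which changes which products $P_iQ_j\cdot P_kQ_l$ land in $X_3$ versus $X_4$; your congruence guess $p\equiv 1\mod~4$ is exactly the parity slip you were worried about. So either redo the computation with the odd-case numbers or, as the paper does, quote~\cite{GGSh} for the strictly Deza property and parameters.

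For $\WL(\Gamma)=\mathcal{A}$, your primary argument has a genuine gap: element orders are \emph{not} invariants of an arbitrary $S$-ring (in $\mathcal{T}_G$ over $C_{pq}$ the unique nontrivial basic set mixes all orders), so you cannot conclude that $P$ and $Q$ are $\WL(S)$-subgroups from ``orders are WL-invariant''; nor can you extract $\underline{X_2}$ from $\underline{S}^2$ by coefficients alone, since the Deza property means only two distinct coefficients occur among $X_1,\dots,X_4$, so $X_2$ need not be separated, and the ``part supported on $Q$'' move already presupposes that $Q$ is a $\WL(S)$-set. Your fallback, however, is exactly the paper's proof: apply Lemma~\ref{circ} to $\mathcal{B}=\WL(S)\leq\mathcal{A}$; since $S$ contains generators of $G$ and the $\mathcal{B}$-basic sets inside $S$ have trivial radical, $\mathcal{B}$ is not a nontrivial generalized wreath product, so $\mathcal{B}$ is either a nontrivial tensor product (necessarily over $P\times Q$) or a normal cyclotomic $S$-ring with trivial radical; in both cases $\underline{X_1},\underline{X_2}\in\mathcal{B}$, hence $\underline{X_4}=\underline{S}-\underline{X_2}\in\mathcal{B}$ and $\underline{X_3}\in\mathcal{B}$, giving $\mathcal{B}=\mathcal{A}$ and $\rkwl(\Gamma)=5$. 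Your treatment of the WL-dimension (lower bound via Lemma~\ref{dim1}, upper bound via Lemma~\ref{onepointnormal} since $\mathcal{A}$ is normal cyclotomic with trivial radical) coincides with the paper and is fine.
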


\begin{proof}
The graph $\Gamma$ is a strictly Deza graph with the above parameters by~\cite{GGSh}. Put $\mathcal{B}=\WL(\Gamma)$. Let us prove that $\mathcal{B}=\mathcal{A}$. Observe that $\mathcal{B}\leq \mathcal{A}$ because $\underline{S}\in \mathcal{A}$. One of the Statements~2-4 of Lemma~\ref{circ} holds for $\mathcal{B}$. The set $S$ contains generators of $G$ and each subset of $S$  has trivial radical. So $\mathcal{B}$ is not a generalized wreath product of two $S$-rings. Therefore Statement~2 or Statement~4 of Lemma~\ref{circ} holds for $\mathcal{B}$. In both cases $\underline{X_1},\underline{X_2}\in \mathcal{B}$. Since $\underline{S}\in \mathcal{B}$ and $X_4=S\setminus X_2$, we obtain $\underline{X_4}\in \mathcal{B}$. Clearly, $X_3=G\setminus (X_0\cup X_1\cup X_2\cup X_4)$. Therefore, $\underline{X_3}\in \mathcal{B}$. This yields that $\mathcal{B}\geq \mathcal{A}$. Thus, $\mathcal{B}=\mathcal{A}$. Obviously, $\rkwl(\Gamma)=\rk(\mathcal{A})=5$.

The graph $\Gamma$ is non-strongly regular because $\rkwl(\Gamma)=5$. So $\dimwl(\Gamma)\geq 2$ by Lemma~\ref{dim1}. Since $\mathcal{A}$ is a normal cyclotomic $S$-ring with trivial radical, $\dimwl(\Gamma)\leq 3$ by Lemma~\ref{onepointnormal} and hence $\dimwl(\Gamma)\in \{2,3\}$. 
\end{proof}

Note that it is not known whether Family~$1$ is infinite. 

\subsection{Family~2}
The graphs from this subsection were introduced in~\cite[p.~7]{HKM} as Deza graphs whose adjacency matrices can be obtained from a regular graphical Hadamard $4\times 4$-matrix. Let $G=A\times B$, where $A\cong C_4$ and $B$ is a cyclic group of odd order~$k\geq 3$. Let $a$ be a generator of $A$. Put
$$X_0=\{e\},~X_1=\{a^2\},~X_2=\{a,a^3\},~X_3=B^{\#},~X_4=a^2B^{\#},~X_5=\{a,a^3\}B^{\#}.$$
The partition $\mathcal{S}=\{X_0,X_1,X_2,X_3,X_4,X_5\}$ of $G$ defines the $S$-ring $\mathcal{A}$ over $G$ such that $\mathcal{A}=\mathcal{A}_A\otimes \mathcal{T}_B$ and $\mathcal{A}_A\cong \mathbb{Z}C_2\wr\mathbb{Z}C_2$. Put $S=X_1\cup X_3\cup X_5$ and $\Gamma=\cay(G,S)$. Note that $S=S^{-1}$. 

Let $\sigma=(23)\in \sym(M)$, where $M=\{0,1,2,3\}$. If we number the elements of $G$ so that the elements from $Ba^i$ have the numbers from $1+i^{\sigma}k$ to $(1+i^{\sigma})k$ then the adjacency matrix of $\Gamma$ is of the following form
\begin{equation*}
\begin{pmatrix}
J_k-I_k & J_k-I_k & J_k-I_k & I_k \\
J_k-I_k & J_k-I_k & I_k & J_k-I_k \\
J_k-I_k  & I_k & J_k-I_k & J_k-I_k  \\
I_k & J_k-I_k & J_k-I_k & J_k-I_k
\end{pmatrix},
\end{equation*}
where $I_k$ and $J_k$ are the identity and all-one matrices of size $k\times k$ respectively. The above matrix coincides with the matrix constructed in~\cite[Construction~4.8]{HKM}. So $\Gamma$ is indeed isomorphic to the graph from~\cite[p.~7]{HKM}.

\begin{prop}\label{f2}
The graph $\Gamma$ is a strictly Deza graph with parameters from the second line of Table~2, $\WL(\Gamma)=\mathcal{A}$, $\rkwl(\Gamma)=6$, and $\dimwl(\Gamma)=2$.
\end{prop}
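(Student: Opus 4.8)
The plan is to mirror the proof of Proposition~\ref{f1}. That $\Gamma$ is a strictly Deza graph with the parameters in the second line of Table~2 is inherited from~\cite{HKM}, since $\Gamma$ has already been identified with the graph of~\cite[p.~7]{HKM}; alternatively it can be read off from the expansion of $\underline{S}^{\,2}$ computed below, together with Lemma~\ref{deza} and the fact that $\rkwl(\Gamma)>3$. The substantive claims are $\WL(\Gamma)=\mathcal{A}$, $\rkwl(\Gamma)=6$ and $\dimwl(\Gamma)=2$; once $\WL(\Gamma)=\mathcal{A}$ is established, $\rkwl(\Gamma)=\rk(\mathcal{A})=|\mathcal{S}(\mathcal{A})|=6$ by Lemma~\ref{rankring}.

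To prove $\WL(\Gamma)=\mathcal{A}$, set $\mathcal{B}=\WL(\Gamma)$. Since $\underline{S}=\underline{X_1}+\underline{X_3}+\underline{X_5}\in\mathcal{A}$, minimality of $\mathcal{B}$ gives $\mathcal{B}\leq\mathcal{A}$, so it remains to show that each $X_i$ is a $\mathcal{B}$-set. Using Eq.~(3) and the identities $(a+a^{3})^{2}=2(e+a^{2})$ and $\underline{B}^{\,2}=k\underline{B}$, one computes in $\mathcal{A}$
$$\underline{S}^{\,2}=(3k-2)\underline{X_0}+(2k-2)(\underline{X_1}+\underline{X_2}+\underline{X_4}+\underline{X_5})+(3k-6)\underline{X_3},$$
equivalently $\underline{S}^{\,2}=k\underline{X_0}+(2k-2)\underline{G}+(k-4)\underline{X_3}$. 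As $\underline{X_0},\underline{G}\in\mathcal{B}$, this yields $(k-4)\underline{X_3}\in\mathcal{B}$, and since $k$ is odd, $k\neq 4$, so $\underline{X_3}\in\mathcal{B}$; in particular $\underline{B}=\underline{X_0}+\underline{X_3}\in\mathcal{B}$, and hence $\underline{X_1}+\underline{X_5}=\underline{S}-\underline{X_3}\in\mathcal{B}$ and $\underline{X_2}+\underline{X_4}=\underline{G}-\underline{X_0}-\underline{X_3}-(\underline{X_1}+\underline{X_5})\in\mathcal{B}$. Next, a short computation in $\mathcal{A}$ gives $\underline{B}\,(\underline{X_1}+\underline{X_5})=\underline{X_1}+\underline{X_4}+(k-1)(\underline{X_2}+\underline{X_5})$; subtracting from this the elements $\underline{X_1}+\underline{X_5}$ and $(k-1)(\underline{X_2}+\underline{X_4})$, both of which lie in $\mathcal{B}$, yields $(k-2)(\underline{X_5}-\underline{X_4})\in\mathcal{B}$, whence $\underline{X_5}-\underline{X_4}\in\mathcal{B}$ because $k\neq 2$. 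Since every element of $\mathcal{B}$ is constant on each basic set of $\mathcal{B}$, and $\underline{X_5}-\underline{X_4}$ takes the values $1,-1,0$ on $X_5$, $X_4$ and the rest of $G$, the sets $X_4$ and $X_5$ are $\mathcal{B}$-sets; then $\underline{X_1}=\underline{S}-\underline{X_3}-\underline{X_5}$ and $\underline{X_2}=\underline{G}-\underline{X_0}-\underline{X_1}-\underline{X_3}-\underline{X_4}-\underline{X_5}$ are $\mathcal{B}$-sets as well. Therefore $\mathcal{A}\leq\mathcal{B}$, so $\mathcal{B}=\mathcal{A}$.

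For the WL-dimension, $\Gamma$ is regular (being a Cayley graph) and, as $\rkwl(\Gamma)=6>3$, it is not strongly regular, so $\dimwl(\Gamma)\geq 2$ by Lemma~\ref{dim1}. For the upper bound, $\mathcal{A}=(\mathcal{T}_L\wr\mathcal{T}_{A/L})\otimes\mathcal{T}_B$ is built from the rank-$2$ $S$-rings $\mathcal{T}_L,\mathcal{T}_{A/L},\mathcal{T}_B$, each of which is separable; two applications of Lemma~\ref{separ} (first to the wreath product, then to the tensor product) show that $\mathcal{A}$ is separable, whence $\dimwl(\Gamma)\leq 2$ by Statement~1 of Lemma~\ref{wldim}. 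Thus $\dimwl(\Gamma)=2$.

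I expect the genuine obstacle to be the equality $\WL(\Gamma)=\mathcal{A}$. In contrast to Family~$1$, here $\mathcal{A}$ is not a normal cyclotomic $S$-ring with trivial radical — in fact it is itself a nontrivial generalized wreath product, with bottom $L=\langle a^{2}\rangle$ and top section over $\langle a\rangle L/L$ — so Lemma~\ref{circ} does not single $\mathcal{B}$ out in one step, and the basic sets $X_1,\dots,X_5$ must be recovered from $S$ by explicit structure-constant bookkeeping. The two elementary facts that make the bookkeeping go through are that $k$ odd forces $k\notin\{2,4\}$, so the integer coefficients produced in the reductions are nonzero, and that a signed combination such as $\underline{X_5}-\underline{X_4}$ lying in an $S$-ring forces each of $X_4$, $X_5$ to be a union of its basic sets.
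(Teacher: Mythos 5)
Your proposal is correct and follows essentially the same route as the paper: bound $\WL(\Gamma)\leq\mathcal{A}$, compute $\underline{S}^{\,2}$ to isolate $\underline{X_3}$ (the coefficient comparison needing $k\neq 4$, which the paper leaves implicit), recover the remaining basic sets by one more product computation, and then get $\dimwl(\Gamma)=2$ from Lemma~\ref{dim1} together with separability of $\mathcal{A}$ via Lemma~\ref{separ} and Statement~1 of Lemma~\ref{wldim}. The only difference is the auxiliary product: the paper squares $S\setminus X_3$, while you use $\underline{B}\,(\underline{X_1}+\underline{X_5})$ and the signed element $\underline{X_5}-\underline{X_4}$ — a harmless variation of the same bookkeeping.
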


\begin{proof}
The graph $\Gamma$ is a strictly Deza graph with the above parameters by~\cite[p.~7]{HKM}. Put $\mathcal{B}=\WL(\Gamma)$. Let us prove that $\mathcal{B}=\mathcal{A}$. Clearly, $\mathcal{B}\leq \mathcal{A}$ because $\underline{S}\in \mathcal{A}$. The direct computation using Eq.~(2) shows that
$$\underline{S}^2=(3k-2)e+2(k-1)(\underline{X_1}+\underline{X_2}+\underline{X_4}+\underline{X_5})+3(k-2)\underline{X_3}.$$
From the above equality it follows that $\underline{X_3}\in \mathcal{B}$ because only $\underline{X_3}$ enters $\underline{S}^2$ with coefficient $3(k-2)$. So $\underline{S_1}\in \mathcal{B}$, where $S_1=S\setminus X_3$. The straightforward check yields that
$$\underline{S_1}^2=(2k-1)e+2(k-1)\underline{X_1}+0\underline{X_2}+2(k-2)\underline{X_3}+2(k-2)\underline{X_4}+2\underline{X_5}.$$
Due to the above equality, the elements $\underline{X_1},\underline{X_2},\underline{X_3}+\underline{X_4},\underline{X_5}$ enters $\underline{S_1}^2$ with pairwise distinct coefficients. So we obtain $\underline{X_1},\underline{X_2},\underline{X_3}+\underline{X_4},\underline{X_5}\in \mathcal{B}$. Since also $\underline{X_3}\in \mathcal{B}$, we have $\underline{X_4}\in \mathcal{B}$. Thus, $\mathcal{B}\geq \mathcal{A}$ and hence $\mathcal{B}=\mathcal{A}$. It is easy to see that $\rkwl(\Gamma)=\rk(\mathcal{A})=6$.

The graph $\Gamma$ is non-strongly regular because $\rkwl(\Gamma)=5$. So $\dimwl(\Gamma)\geq 2$ by Lemma~\ref{dim1}. The $S$-ring $\mathcal{A}$ is separable by Lemma~\ref{separ} and hence $\dimwl(\Gamma)\leq 2$ by Statement~1 of Lemma~\ref{wldim}. Thus, $\dimwl(\Gamma)=2$.
\end{proof}

\subsection{Sporadic graphs}
Two graphs from this subsection were found by computer calculations and described in~\cite{GGSh}. Let $G\cong C_n$, where $n\in\{8,9\}$, and $a$ a generator of $G$. Let $\sigma\in \aut(G)$ such that $a^{\sigma}=a^{-1}$ and $K=\langle \sigma\rangle$. Put $\mathcal{A}=\cyc(K,G)$. Lemma~\ref{circ} implies that $\mathcal{A}$ is a normal cyclotomic $S$-ring with trivial radical. If $n=8$ then put $S=\{a,a^7,a^2,a^6\}$; if $n=9$ then put $S=\{a,a^8,a^2,a^7\}$. Put also $\Gamma=\cay(G,S)$.

\begin{prop}\label{sp}
The graph $\Gamma$ is a strictly Deza graph with parameters from the third line if $n=8$ and the fourth line if $n=9$ of Table~2, $\WL(\Gamma)=\mathcal{A}$, $\rkwl(\Gamma)=5$, and $\dimwl(\Gamma)=2$.
\end{prop}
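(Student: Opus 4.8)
The structure of the argument will closely follow that of Propositions~\ref{f1} and~\ref{f2}: first identify the WL-closure with the prescribed cyclotomic $S$-ring $\mathcal{A}$, then read off the WL-rank, and finally bound the WL-dimension using the separability results from Section~6. The fact that $\Gamma$ is a strictly Deza graph with the parameters recorded in Table~2 is already established in~\cite{GGSh}, so that part can simply be cited.

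First I would show that $\WL(\Gamma)=\mathcal{A}$. Since $S$ is an $\mathcal{A}$-set by construction, $\mathcal{B}:=\WL(\Gamma)\leq \mathcal{A}$, so it suffices to prove $\mathcal{B}\geq \mathcal{A}$. For $n=8$ the basic sets of $\mathcal{A}=\cyc(K,G)$ with $K=\langle\sigma\rangle$, $a^\sigma=a^{-1}$, are $\{e\}$, $\{a,a^7\}$, $\{a^2,a^6\}$, $\{a^3,a^5\}$, $\{a^4\}$; for $n=9$ they are $\{e\}$, $\{a,a^8\}$, $\{a^2,a^7\}$, $\{a^3,a^6\}$, $\{a^4,a^5\}$. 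In each case $S$ is the union of the two basic sets of size~$2$ nearest the identity, so $\rk(\mathcal{A})=5$. The plan is to compute $\underline{S}^2$ in $\mathbb{Z}G$ directly (a short finite calculation), observe that the distinct coefficients appearing force several individual $\underline{X_i}$ to lie in $\mathcal{B}$, and then recover the remaining basic sets as complements within $G$ of already-known $\mathcal{B}$-sets. If the coefficients in $\underline{S}^2$ do not separate all basic sets on their own, I would iterate: pass to $\underline{S_1}=\underline{S}$ minus an already-recovered basic set and compute $\underline{S_1}^2$, exactly as in the proof of Proposition~\ref{f2}. Because $G$ has only $8$ or $9$ elements, worst case one simply checks that the orbit partition of $K$ is the coarsest $S$-ring partition refining $S$, which can be verified by hand. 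This yields $\mathcal{B}=\mathcal{A}$ and hence $\rkwl(\Gamma)=\rk(\mathcal{A})=5$.

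For the WL-dimension, since $\Gamma$ is a Cayley graph it is regular, and $\rkwl(\Gamma)=5>3$ forces $\Gamma$ to be non-strongly regular, so $\dimwl(\Gamma)\geq 2$ by Lemma~\ref{dim1}. For the upper bound $\dimwl(\Gamma)\leq 2$ I would invoke Statement~1 of Lemma~\ref{wldim}: it suffices to show $\mathcal{A}$ is separable. Here $|G|\in\{8,9\}\leq 14$, and every $S$-ring over a group of order at most~$14$ is separable (see~\cite[p.~64]{CP}); alternatively one could note $\mathcal{A}=\cyc(K,G)$ is a normal cyclotomic $S$-ring with trivial radical (Lemma~\ref{circ}) and apply Lemma~\ref{onepointnormal}, which only gives $\dimwl(\Gamma)\leq 3$, so the small-order separability fact is the one that gives the sharp bound $\dimwl(\Gamma)=2$.

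The only mildly delicate point is the middle step, establishing $\mathcal{B}=\mathcal{A}$: one must make sure the coefficient pattern in $\underline{S}^2$ (and, if needed, in $\underline{S_1}^2$) really does distinguish enough basic sets, rather than leaving an ambiguous pair. I expect this to be routine for both $n=8$ and $n=9$, but it is where all the actual computation lives; everything else is citation or an immediate consequence of Section~6. As a sanity check one can also argue abstractly that $\mathcal{B}$, being an $S$-ring over a cyclic group of order $8$ or $9$ in which $S$ generates $G$ and every subset of $S$ has trivial radical, cannot be a nontrivial generalized wreath or tensor product, hence by Lemma~\ref{circ} must be cyclotomic with trivial radical, and the only such $S$-ring of rank $\geq 5$ containing $\underline{S}$ is $\mathcal{A}$ itself.
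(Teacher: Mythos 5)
Your proposal is correct and follows essentially the same route as the paper: the paper disposes of the strictly-Deza, parameter, $\WL(\Gamma)=\mathcal{A}$ and $\rkwl(\Gamma)=5$ claims by citing computer calculations (and~\cite{GGSh}), and then proves $\dimwl(\Gamma)=2$ exactly as you do, via Lemma~\ref{dim1} for the lower bound and the separability of every $S$-ring over a group of order at most~$14$ together with Statement~1 of Lemma~\ref{wldim} for the upper bound, so your explicit $\underline{S}^2$-computation merely replaces the computer check by hand (and it does work for both $n=8$ and $n=9$). The only small inaccuracy is in your optional ``sanity check'': it is not true that every subset of $S$ has trivial radical (for $n=8$ the subset $\{a^2,a^6\}\subseteq S$ has radical $\{e,a^4\}$); one should instead argue with the basic set containing a generator of $G$, but this aside is not needed for your main argument.
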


\begin{proof}
All statements of the lemma except the last one can be easily verified by computer calculations. Lemma~\ref{dim1} yields that $\dimwl(\Gamma)\geq 2$. From~\cite[p.~64]{CP} it follows that every $S$-ring over a group of order at most~$14$ is separable. So $\dimwl(\Gamma)\leq 2$ by Statement~1 of Lemma~\ref{wldim}. Thus, $\dimwl(\Gamma)=2$.
\end{proof}

\subsection{Proof of Theorem~\ref{main3}}

The analysis of the computational results presented in~\cite{GGSh} implies that every strictly Deza circulant graph with at most~$95$ vertices belongs to one of the families from Theorem~\ref{main1}, or Family~$1$, or Family~$2$, or is isomorphic to one of two sporadic graphs. Therefore the statement of Theorem~\ref{main3} follows from Theorem~\ref{main2}, Proposition~\ref{f1}, Proposition~\ref{f2}, and Proposition~\ref{sp}.

\appendix
\setcounter{secnumdepth}{0}
\section{Appendix}
We collect an information on the circulant Deza graphs which occur in the paper in two tables below. Table~$1$ contains an information on the graphs from Theorem~\ref{main1}. The number of a graph in the first column corresponds to the number of statement of Theorem~\ref{main1} that holds for the graph. In the second column we list parameters of Deza graphs and in the third column we indicate which graphs are strictly Deza graphs (SDG).

In the fourth column we provide the description of the automorphism groups of graphs from Theorem~\ref{main1}. This description can be deduced from Eqs.~(1), (4), (5), and Lemma~\ref{autnorm}. The parameters $a$ and $b$ of $\Gamma_7$ and $\Gamma_8$ can be expressed via cyclotomic numbers (see Case~$5$ in the proof of Proposition~\ref{proof1}). In the fifth column we provide an information on WL-dimension of the graphs from Theorem~\ref{main1}. This information is taken from Proposition~\ref{proof2}.

In the sixth column we indicate which graphs from Theorem~\ref{main1} are divisible design. A $k$-regular graph with $n$ vertices is called a \emph{divisible design graph} (\emph{DDG}) with parameters $(n,k,a,b,m,l)$ if its vertex set can be partitioned into $m$ classes of size $l$, such that every two distinct vertices from the same class have $a$ common neighbors and every two vertices from different classes have $b$ common neighbors. The notion of a divisible design graph was introduced in~\cite{HKM}. One of the reasons to study divisible design graphs, in particular divisible design Deza graphs, is that they make a link between combinatorial design theory and algebraic graph theory. For more details on divisible design graphs, we refer the readers to~\cite{HKM,KS}. From~\cite[Theorem~1.1]{KS} it follows that a Cayley Deza graph $\cay(G,S)$ over a group $G$ is divisible design if and only if $A\cup\{e\}$ or $B\cup\{e\}$, where $A$ and $B$ are defined as in Lemma~\ref{deza}, is a subgroup of $G$. Using this statement, one can easily check which of the considered graphs are divisible design.

\begin{table}[h]
{\small
\scalebox{0.87}{\begin{tabular}{|l|l|l|l|l|l|}
  \hline
$\Gamma$ & parameters of $\Gamma$ & SDG & $\aut(\Gamma)$ & WL-dim & DDG \\
  \hline
  
$\Gamma_1$ & $(4m,m+2,m-2,2)$, $m>1$ is odd & yes & $\sym(4)\times \sym(m)$ & $2$ &yes \\ \hline

$\Gamma_2$ & $(2m,m-1,m-2,0)$, $m>1$ is odd & no & $C_2\times \sym(m)$ & $2$ &yes  \\ \hline
	
$\Gamma_3$ & $(5m,2,1,0)$, $m>1$ & no & $(C_5\rtimes C_2)\wr \sym(m)$ & $2$ &no  \\ \hline

$\Gamma_4$ & $(2lm,m,m,0)$, $l,m>1$ & no & $(\sym(m)\wr C_2)\wr \sym(l)$ & $2$ &yes  \\ \hline

$\Gamma_5$ & $(2lm,2lm-2m+1,2lm-2m,2lm-4m+2)$, $l,m>1$  & yes & $(C_2\wr\sym(m))\wr \sym(l)$ & $2$ &yes  \\ \hline

$\Gamma_6$ & $(2p,p,p-1,\frac{p-1}{2})$, $p\equiv 1\mod~4$ is prime & yes & $C_2\wr(C_p\rtimes C_{\frac{p-1}{2}})$ & $2$ or $3$ &yes \\ \hline

$\Gamma_7$ & $(p,\frac{p-1}{3},b,a)$, $p=t^2+3$ is prime & yes & $C_p\rtimes C_{\frac{p-1}{3}}$ & $2$ or $3$ &no \\ \hline

$\Gamma_8$ & $(p,2\frac{p-1}{3},b,a)$, $p=t^2+12$ is prime & yes & $C_p\rtimes C_{\frac{p-1}{3}}$ & $2$ or $3$ &no \\ \hline

\end{tabular}
}}
\vspace{\baselineskip}
\caption{Deza circulant graphs of WL-rank~$4$.}
\end{table}

Table~$2$ contains an information on graphs from Section~$8$. The first and the second lines are concerned with Families~$1$ and~$2$ respectively; the third and the fourth lines concerned with sporadic graphs. As in the previous table, the information on the automorphism groups can be deduced from  Eqs.~(1), (3), (4), and Lemma~\ref{autnorm}. The group $K(p,q)$ in the first line and the third column is defined by Eq.~(14). The information on WL-dimension from the fifth column is taken from Propositions~\ref{f1},~\ref{f2}, and~\ref{sp}. One can check whether $\Gamma$ is divisible design using Lemma~\ref{deza} and~\cite[Theorem~1.1]{KS}.

\begin{table}[h]
{\small
\begin{tabular}{|l|l|l|l|l|}
  \hline
 $\Gamma$ & parameters of $\Gamma$ & $\aut(\Gamma)$ & WL-dim  & DDG  \\
  \hline
  
$\Gamma_1$ & \makecell{$(pq,(pq+3)/2,(pq+7)/4, (pq+3)/4)$, \\$p$ and $q$ are distinct primes,\\$q-p=4$, $p\equiv q\equiv 3\mod~4$}  & $C_{pq}\rtimes K(p,q)$ & $2$ or $3$ & no\\ \hline
	
$\Gamma_2$ & $(4m,3m-2,3(m-2),2(m-1))$, $m>1$ is odd & $(C_2\wr C_2)\times \sym(m)$& $2$ &yes \\ \hline

$\Gamma_3$ & $(8,4,2,1)$ &  $C_8\rtimes C_2$& $2$ &no \\ \hline

$\Gamma_4$ & $(9,4,2,1)$ &  $C_9\rtimes C_2$& $2$ &no \\ \hline

\end{tabular}
}
\vspace{\baselineskip}
\caption{Strictly Deza circulant graphs of WL-rank $>4$.}

\end{table}

\end{document}